\documentclass[12pt]{amsart}
\usepackage{color,graphicx,pstricks}

\newtheorem{theorem}{Theorem}
\theoremstyle{plain}

\newtheorem{corollary}{Corollary}

\newtheorem{definition}{Definition}

\newtheorem{lemma}{Lemma}

\newtheorem{proposition}{Proposition}

\numberwithin{equation}{section}
\newtheorem*{t1}{Result 1}
\newtheorem*{t2}{Result 2}
\newtheorem*{t3}{Result 3}
\newtheorem*{t4}{Result 4}
\newtheorem*{t5}{Result 5}

 \title{Plane overpartitions and cylindric partitions}

 \author{Sylvie Corteel, Cyrille Savelief and Mirjana Vuleti\'{c}}
 \address{CNRS, LIAFA, Case 7014, Universit\'e Paris-Diderot, 75205 Paris Cedex 13, France}\email{corteel@liafa.jussieu.fr}
\address{CNRS, LRI, Universit\'e Paris-Sud, 91405 Orsay Cedex, France}\email{csavelief@gmail.com}
 \address{Department of Mathematics,
Brown University, Providence, RI 02912, USA}\email{vuletic@math.brown.edu}

\thanks{The first author was partially supported by
  the grant ANR-08-JCJC-0011}
 \begin{document}

\maketitle

\begin{abstract}
Generating functions for plane overpartitions are obtained using various methods such as nonintersecting paths, RSK type algorithms and symmetric functions. We extend some of the generating functions to cylindric partitions. Also, we show that plane overpartitions correspond to certain domino tilings and we give some basic properties of this correspondence.
\end{abstract}

\section{Introduction}

The goal of the first part of this paper is to introduce a new
object called plane overpartitions, and to give several enumeration
formulas for these plane overpartitions. A {\it plane overpartition}
is a plane partition where (1) in each row the last occurrence of an
integer can be overlined or not and all the other occurrences of this integer are not overlined and (2) in each column the first
occurrence of an integer can be overlined or not and all the other occurrences of this integer
are overlined. An example of a plane overpartition is
$$
\begin{array}{llll}
4 & 4 & \bar{4} & \bar{3}\\
\bar{4} & 3 & {3} & \bar{3}\\
\bar{4} & \bar{3} & &\\
{3} &  & &\\
\end{array}
$$
%This is an overpartition of the shape $(4,4,2,1)$, with the weight equal to 38 and 6 overlined parts.

This paper takes its place in the series of papers on overpartitions started
by Corteel and Lovejoy \cite{CL}. The motivation is
to show that the generating function for plane overpartitions
is:
\begin{equation}\label{first}
\prod_{n\ge 1}\left(\frac{1+q^n}{1-q^n}\right)^n.
\end{equation}
In this paper, we give several proofs of this result and several
refinements and generalizations. Namely, we prove the following results.
\begin{t1}The hook--content formula for the generating function for plane overpartitions of a given
shape, see Theorem \ref{T1}.
\end{t1}
\begin{t2}The hook formula for the generating function for reverse plane
overpartitions, see Theorem \ref{reverse}.
\end{t2}
\begin{t3}The generating function formula for plane overpartitions with bounded
parts, see Theorem \ref{boundedparts}.
\end{t3}

The goal of the second part of this paper is to extend the
generating function formula for cylindric partitions due to Borodin \cite{Bo} and the following
1-parameter generalized MacMahon's formula due to the third author of this paper \cite{V2}:
\begin{equation}\label{genmac}
\sum_{\substack{\Pi \text{ is a} \\ \text{plane
partition}}}A_\Pi(t)q^{|\Pi|} =\prod_{n=1}^\infty
\left(\frac{1-tq^{n}}{1-q^{n}}\right)^{n},
\end{equation}
where the weight $A_\Pi(t)$ is a polynomial in $t$ that we describe
below.

%{\bf *** Reformulate-check}

Given a plane partition $\Pi$ (a Ferrers diagram filled with positive integers that form nonincreasing rows and columns), a connected component of $\Pi$ is the set of all connected boxes of its Ferrers diagram that are filled with a same number. If a box $(i,j)$ belongs to a connected component $C$ then we define its level $h(i,j)$ as the smallest positive integer such that $(i+h,j+h)$ does not belong to $C$. A border component of level $i$ is a connected subset of a connected component whose all boxes have level $i$, see Figure \ref{Fig11}.  We associate to each border component of level $i$, the
weight $(1-t^i)$. The polynomial $A_\Pi(t)$ is the product of the weights of its border components. For the plane partition from Figure \ref{Fig11} it is $(1-t^{10})(1-t^2)^2(1-t^3)^2$.

\begin{figure} [htp!]
\centering
\includegraphics[height=4cm]{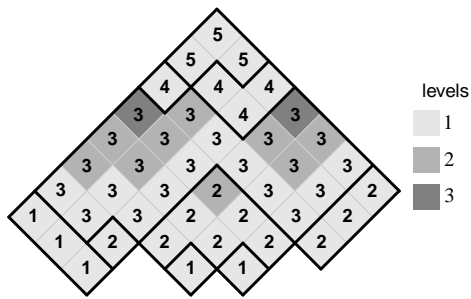}
\caption{A plane partition: border components} \label{Fig11}
\end{figure}

%{\bf *** bad style}
We give a new proof of the 1-parameter generalized MacMahon's formula. 
We also extend this formula  to two more general objects:
skew plane partitions and cylindric partitions. Namely, we prove the following results.
\begin{t4}1-parameter generalized formula for the generating function for skew plane partitions, see Theorem \ref{skew}.
\end{t4}
\begin{t5}1-parameter generalized formula for the generating function for cylindric partitions, see Theorem
\ref{uvodcyl}.
\end{t5}

In the rest of this section we give definitions and explain our
results in more detail.

%\section{Definitions and context}

A partition $\lambda$ is a nonincreasing sequence of positive
integers $(\lambda_1,\ldots ,\lambda_k)$. Each $\lambda_i$ is
a part of the partition and the number of parts is denoted by
$\ell(\lambda)$. The weight $|\lambda|$ of $\lambda$ is the sum of
its parts.  A partition $\lambda$ can be graphically represented by
the Ferrers diagram that is a diagram formed of $\ell(\lambda)$ left
justified rows, where the $i^{th}$ row consists of $\lambda_i$ cells
(or boxes). The conjugate of a partition $\lambda$, denoted by
$\lambda'$, is a partition that has the Ferrers diagram equal to the
transpose of the Ferrers diagram of $\lambda$. For a cell $(i,j)$ of
the Ferrers diagram of $\lambda$ the {\em hook length} of this cell is
$h_{i,j}=\lambda_i+\lambda'_j-i-j+1$ and the {\em content} is $c_{i,j}=j-i$.
It is well known that the generating function for partitions that
have at most $n$ parts is $1/(q)_n$, where
$(a)_n:=(a;q)_n=\prod_{i=0}^{n-1}(1-aq^i)$. More definitions on
partitions can be found, for example, in \cite{A} or \cite{Mac}.

An {\it overpartition} is a partition where the last occurrence
of an integer can be overlined \cite{CL}. Last occurrences in an overpartition are in one--to--one correspondence with corners of the Ferrers diagram and overlined parts can be represented by marking the corresponding corners. The generating function for overpartitions that have at most $n$ parts is
$(-q)_n/{(q)_n}$.

Let $\lambda$ be a partition. A {\it plane partition} of shape
$\lambda$ is a filling of cells of the Ferrers diagram of $\lambda$
with positive integers that form a nonincreasing sequence along each
row and each column. We denote the shape of a plane partition $\Pi$
by $\text{sh}(\Pi)$ and the sum of all entries by $|\Pi|$,
called the weight of $\Pi$. It is well known, under the name of
MacMahon's formula, that the generating function for plane partitions
is
\begin{equation}
\label{planeovergf}
\sum_ {\substack{\Pi \text{ is a}\\ \text{plane
partition}}}q^{|\Pi|}=
\prod_{i=1}^\infty\left(\frac{1}{1-q^i}\right)^i.
\end{equation}
%$$
%\prod_{n=1}^\infty\frac{1}{(1-q^n)^n}.
%$$
%{\bf *** Reformulate-check}

One way to prove this is to construct a bijection between plane partitions and
pairs of semi-standard Young tableaux of the same shape
and to use the RSK correspondence between these Young tableaux and certain 
matrices \cite{BK}.

%We would to develop an analogue of plane partitions
%for overpartitions.
%Therefore the natural question is the following. What are the combinatorial
%objects whose generating function is
%$$
%\prod_{i\ge 1}\frac{(1+q^i)^i}{(1-q^i)^i}\ ?
%$$
%
%The goal of this note is to define those objects and prove a refined version
%of the previous question.

Recall that a plane overpartition is a plane partition where in each row the last occurrence of an integer can be overlined or not and in each column the first occurrence
of an integer can be overlined or not and all the others are overlined.
This definition  implies that the entries
strictly decrease along diagonals, i.e. all connected components are also border components.
%{\bf *** Define clearly-check}
Therefore, a plane overpartition is
a diagonally strict plane partition where  some entries are
overlined. More precisely, it is easy to check that inside a border component 
only one entry can be chosen to be overlined or not and this entry
is the upper right entry.

Plane  overpartitions are therefore
in bijection with diagonally strict plane partitions
where each border component can be overlined or not (or weighted by 2).
Recently, those weighted diagonally strict plane partitions were studied in
\cite{FW,FW1,V1,V2}. The first result obtained was the shifted MacMahon's formula that says that the generating function for plane overpartitions is indeed equation \eqref{first}.
%$$
%\sum_ {\substack{\Pi \text{ is a}\\ \text{plane
%overpartition}}}q^{|\Pi|}=
%\prod_{n=1}^\infty\left(\frac{1+q^n}{1-q^n}\right)^n.
%$$
This was obtained as a limiting case of the generating function formula for plane overpartitions which fit into an $r\times c$ box, i.e. whose shapes are contained in the rectangular  shape with $r$ rows and $c$ columns.
\begin{theorem}\cite{FW,V1}\label{osnovna}
The generating function for plane overpartitions which fit in an
 $r\times c$ box is
$$
\prod_{i=1}^r\prod_{j=1}^c \frac{1+q^{i+j-1}}{1-q^{i+j-1}}.
% \prod_{n=1}^c \frac{(-q^n)_r}{(q^n)_r}.
$$
\end{theorem}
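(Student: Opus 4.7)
The plan is to encode plane overpartitions as pairs of marked shifted Young tableaux and then reduce the statement to the Cauchy identity for Schur $P$ and $Q$ functions. The starting observation is that, because entries of a plane overpartition strictly decrease along every diagonal, slicing $\Pi$ along its $r+c-1$ antidiagonals yields a sequence of strict partitions that interlace in the shifted sense. The overlining/non-overlining convention at each step records which of the newly added boxes carry a prime.

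Packaging this data in the standard way assigns to $\Pi$ a pair $(P,Q)$ of marked shifted Young tableaux of a common strict shape $\mu$, where $P$ has entries in $\{1',1,\ldots,r',r\}$ and $Q$ has entries in $\{1',1,\ldots,c',c\}$; this is a shifted RSK-type bijection in the spirit of Sagan and Worley. A cell of $\Pi$ lying on the antidiagonal $i+j-1$ contributes $q^{i+j-1}$ to $q^{|\Pi|}$, and under the specialization $x_i = q^{i-1/2}$, $y_j = q^{j-1/2}$ this contribution factors as $q^{i-1/2}\cdot q^{j-1/2}$, accounted for by an $i$-entry of $P$ and a $j$-entry of $Q$. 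Therefore the bijection gives
$$
\sum_{\Pi}q^{|\Pi|} \;=\; \sum_{\mu}P_{\mu}(x_1,\ldots,x_r)\,Q_{\mu}(y_1,\ldots,y_c).
$$
The Cauchy identity for Schur $P$ and $Q$ functions then yields
$$
\sum_{\mu}P_{\mu}(x)\,Q_{\mu}(y) \;=\; \prod_{i=1}^{r}\prod_{j=1}^{c}\frac{1+x_i y_j}{1-x_i y_j} \;=\; \prod_{i=1}^{r}\prod_{j=1}^{c}\frac{1+q^{i+j-1}}{1-q^{i+j-1}},
$$
which is the claim.

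The main obstacle is the careful verification of the shifted RSK bijection and its weight-preservation. In particular, one must match the overlining conventions of plane overpartitions (an overline is only permitted at the last occurrence in its row and the first occurrence in its column) with the primed/unprimed conventions of the shifted tableaux $P$ and $Q$, and check that the interlacing relations between successive antidiagonal partitions $\mu^{(k)}\prec\mu^{(k+1)}$ match exactly the horizontal-strip rules governing the cells of $P$ and $Q$ labelled $k$. Once this bookkeeping is in place, the product formula drops out of Cauchy's identity.
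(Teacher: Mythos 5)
Your plan is correct and is essentially the proof from the cited sources [FW], [V1]: slice the underlying strict plane partition along its diagonals $j-i=\mathrm{const}$ (you call these antidiagonals) into an interlacing sequence of strict partitions, package the two halves as a pair of marked shifted tableaux of the common shape given by the main diagonal, and invoke the Cauchy identity for Schur $P$ and $Q$ functions under $x_i=q^{i-1/2}$, $y_j=q^{j-1/2}$. The paper itself reaches Theorem~\ref{osnovna} by a different, more general route: it proves the Hall--Littlewood $t$-deformation (Theorem~\ref{mirjana}) by combining the Bender--Knuth bijection between plane partitions and pairs of column-strict plane partitions of the same shape with the combinatorial formula $Q_\lambda(x;t)=\sum_\Lambda\varphi_\Lambda(t)x^\Lambda$ and the identity $\sum_\lambda Q_\lambda(x;t)Q_\lambda(y;t)/b_\lambda(t)=\prod_{i,j}(1-tx_iy_j)/(1-x_iy_j)$ at $x_i=q^i$, $y_j=q^{j-1}$; the present theorem is then the specialization $t=-1$, where $A_\Pi(-1)$ annihilates every plane partition that is not diagonally strict and weights the survivors by $2^{k(\Pi)}$. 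Your argument is precisely the $t=-1$ shadow of that one (at $t=-1$ the Hall--Littlewood functions become Schur $P$ and $Q$), so it buys directness at the cost of the free parameter, while the paper's version buys the full generalized MacMahon formula; the $\varphi/\psi$-weights it tracks are exactly the algebraic counterpart of the priming freedom you must check cell by cell. Two details to tighten when you write it up: no insertion algorithm is actually needed (the slicing itself is the bijection), and the factor $q^{i+j-1}$ is carried not by a cell of $\Pi$ but by a cell $u$ of the common shifted shape with labels $P(u)=i$ and $Q(u)=j$ (equivalently, a box of the three-dimensional diagram of $\Pi$); the telescoping identity $\sum_k k\bigl(|\mu^{(k-1)}|-|\mu^{(k)}|\bigr)=\sum_{k\ge 0}|\mu^{(k)}|$ applied to each half is what makes $x^P y^Q=q^{|\Pi|}$.
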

This theorem was proved in  \cite{FW,V1} using Schur $P$ and $Q$ symmetric functions and a suitable Fock space. In \cite {V2} the theorem was proved in a bijective way where an RSK--type algorithm (due to Sagan \cite{Sa}, see also
Chapter XIII of \cite{HH})
was used to construct a bijection between plane overpartitions
and matrices of nonnegative
integers where positive entries can be overlined.

In Section 2, we give a mostly combinatorial proof of the
generalized MacMahon formula \cite{V2}. Namely, we prove:
\begin{theorem}\cite{V2}
\begin{equation}
\sum_{\Pi\in {\mathcal P}(r,c)}A_\Pi(t)q^{|\Pi|}
=\prod_{i=1}^r\prod_{j=1}^c \frac{1-tq^{i+j-1}}{1-q^{i+j-1}}.
\end{equation}
\label{mirjana}
\end{theorem}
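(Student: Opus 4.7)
My plan is to reduce Theorem \ref{mirjana} to the Hall--Littlewood Cauchy identity
$$\sum_{\lambda} P_\lambda(x;t)\,Q_\lambda(y;t) \;=\; \prod_{i,j}\frac{1-t\,x_iy_j}{1-x_iy_j},$$
specialized at $x_i = q^i$ for $1 \le i \le r$ and $y_j = q^{j-1}$ for $1 \le j \le c$, so that $x_iy_j = q^{i+j-1}$ and the right-hand side matches the product in the theorem. It then suffices to show that the left-hand side of Theorem \ref{mirjana} equals $\sum_{\lambda} P_\lambda(q,q^2,\ldots,q^r;t)\,Q_\lambda(1,q,\ldots,q^{c-1};t)$.

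For this I would encode a plane partition $\Pi \in \mathcal{P}(r,c)$ by its sequence of diagonals $\mu^{(k)} = (\pi_{i,i+k})_i$, which form an interlacing chain. Cutting at the main diagonal produces two interlacing half-chains from $\emptyset$ to a common partition $\mu$, and a transfer-matrix-style reading identifies the sum over such chains with $\sum_\mu P_\mu(x;t)\,Q_\mu(y;t)$ at the specialization above; the $q$-weight $q^{|\Pi|}$ is produced automatically from the arithmetic $x_i y_j = q^{i+j-1}$ applied cell by cell. This is a natural $t$-extension of the Sagan-type RSK algorithm used in \cite{V2} to prove Theorem \ref{osnovna} (the case $t=-1$, where $A_\Pi(-1)$ is $2^{(\text{number of border components of odd level})}$ times the indicator of diagonal strictness), so the bijective skeleton is already available.

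The principal obstacle is weight-matching: one must show that the $t$-factors contributed by the Hall--Littlewood combinatorial formula, attached to horizontal strips between consecutive diagonals, multiply across the chain to exactly $A_\Pi(t) = \prod_c (1 - t^{h(c)})$, the product over border components indexed by their levels. Geometrically, a border component of level $h$ spans $h$ consecutive diagonals, and the claim reduces to checking that the local $t$-weights on these $h$ successive horizontal strips combine into precisely the single factor $(1-t^h)$, with no extraneous contributions from strip segments that do not terminate a component. Once this diagonal-by-diagonal local identification is verified, the specialized Cauchy identity delivers the product formula immediately and finishes the proof.
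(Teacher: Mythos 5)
Your overall architecture coincides with the paper's proof in Section 2: slice $\Pi$ into its diagonal partitions, cut at the main diagonal to get two interlacing half-chains (equivalently, a Bender--Knuth pair of column-strict plane partitions of the same shape), read the sum over such chains as the Hall--Littlewood Cauchy kernel via the combinatorial formula for $Q_\lambda(x;t)$, and specialize $x_i=q^i$, $y_j=q^{j-1}$. The $q$-weight bookkeeping is fine. But the step you explicitly defer --- that the product of the local $\varphi/\psi$ factors over all strips equals $A_\Pi(t)$ --- is the entire content of the theorem beyond the Cauchy identity, and the geometric picture you offer for verifying it is wrong. A border component of level $h$ does \emph{not} span $h$ consecutive diagonals: in a $2\times 2$ block of equal entries, the level-$1$ border component spans three diagonals while the level-$2$ component spans one. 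Consequently the factor $(1-t^h)$ cannot arise by ``combining local $t$-weights on $h$ successive horizontal strips.''

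The correct accounting, which is what the paper establishes, is different: each strip between consecutive diagonals contributes factors $(1-t^{m})$ where $m$ is a part-multiplicity of the diagonal partition at the columns where the strip terminates, and each border component contributes exactly \emph{one} such factor, located at the single diagonal where the component ends (for the half-chain on the nonnegative side) or starts (for the other half); the exponent equals the level because a run of $m$ equal entries along a diagonal meets the $m$ nested border components of levels $1,\dots,m$ in one box each. Moreover, components crossing the main diagonal are counted once by each half-chain, and this double count is cancelled precisely by the $b_\lambda(t)$ in the denominator of the Cauchy sum (equivalently, by using $P_\lambda Q_\lambda$ rather than $Q_\lambda Q_\lambda$). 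Until you prove this factor-by-component correspondence --- the analogue of Propositions 2.4 and 2.6 of \cite{V2} --- the argument is a reduction of the theorem to its hardest step rather than a proof.
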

In the above formula,  ${\mathcal P}(r,c)$ is the set of plane partitions with at
most $r$ rows and $c$ columns.  When we set $t=-1$, only the border
components of level $1$ have a non--zero weight and we get back Theorem
\ref{osnovna}.

The main result of Section 3 is a hook--content formula for the
generating function for plane overpartitions of a given shape. More
generally, we give a weighted generating function where overlined
parts are weighted by some parameter $a$. %Throughout Sections 3,
%4 and 5 we assume these weights, unless otherwise stated.

Let $\mathcal{S}(\lambda)$ be the set of all plane overpartitions of
shape $\lambda$.  The number of overlined parts of a plane overpartition
$\Pi$ is denoted by $o(\Pi)$. %Let $h_{i,j}=\lambda_i-i+1+\lambda'_j-j$ be the hook length of the
%cell $(i,j)$ and $c_{i,j}=j-i$ be the content of the cell $(i,j)$. 
For example, $\Pi=
\begin{array}{llll}
4 & 4 & \bar{4} & \bar{3}\\
\bar{4} & 3 & {3} & \bar{3}\\
\bar{4} & \bar{3} & &\\
\end{array}
$
is a plane overpartition of shape $(4,4,2)$, with $|\Pi|=35$ and $o(\Pi)=6$.

\begin{theorem}\label{T1}
Let $\lambda$ be a partition. The weighted generating function for
plane overpartitions of shape $\lambda$ is
\begin{equation}
\sum_{\Pi \in \mathcal{S}(\lambda)} a^{o(\Pi)}q^{|\Pi|}=q^{\sum_i
i\lambda_i}\prod_{(i,j)\in\lambda}\frac{1+aq^{c_{i,j}}}{1-q^{h_{i,j}}}.
\end{equation}
%where $h_{i,j}=\lambda_i-i+1+\lambda'_j-j$ is the hook length of the
%cell $(i,j)$ and $c_{i,j}=j-i$ is the content of the cell $(i,j)$.
\end{theorem}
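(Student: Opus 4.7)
The plan is to prove Theorem \ref{T1} by encoding each plane overpartition as a family of nonintersecting lattice paths, applying the Lindstr\"om--Gessel--Viennot (LGV) lemma to get a determinantal formula, and then evaluating the determinant. The product structure of the right-hand side strongly suggests this route: the denominator $\prod(1-q^{h_{i,j}})$ is characteristic of the hook-length expressions that appear when Vandermonde-type determinants are reduced to products, while the numerator $\prod(1+aq^{c_{i,j}})$ reflects that each cell of $\lambda$ carries at most one free overline whose $a$-weight is twisted by its content.

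First I would encode a plane overpartition $\Pi$ of shape $\lambda$ by $\ell(\lambda)$ nonintersecting lattice paths $P_1,\dots,P_{\ell(\lambda)}$, with $P_i$ reading off row $i$ of $\Pi$ as a sequence of east- and south-steps; an east-step at height $k$ corresponds to an entry equal to $k$, contributes $q^k$, and carries an additional factor $a$ when that entry is overlined. Sources and sinks are placed so that nonintersection enforces the strict-diagonal property, and the row/column overline rules become local weight conditions on the paths. Applying LGV then yields
\[
\sum_{\Pi \in \mathcal{S}(\lambda)} a^{o(\Pi)} q^{|\Pi|} \;=\; \det\bigl[\,M_{ij}(a,q)\,\bigr]_{1 \le i,j \le \ell(\lambda)},
\]
where $M_{ij}(a,q)$, the single-path generating function from source $i$ to sink $j$, is a rational function in $q$ and $a$ whose numerator is a product of factors $1+aq^{\bullet}$ and whose denominator a product of factors $1-q^{\bullet}$, with exponents parametrized by the endpoints. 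Row- and column-operations reduce this determinant to a Vandermonde-type form whose evaluation yields the hook-length denominator $\prod(1-q^{h_{i,j}})$ and the content-twisted numerator $\prod(1+aq^{c_{i,j}})$; the prefactor $q^{\sum_i i\lambda_i}$ records the minimum weight of a column-strict plane partition of shape $\lambda$, recovered in the $a=0$ specialization.

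The main obstacle will be the bookkeeping in the determinant evaluation: the strategy mirrors Stanley's derivation of the hook-content formula for $s_\lambda(1,q,\dots,q^{n-1})$, but the overline weight $a$ forces careful tracking so that the factors $(1+aq^{c_{i,j}})$ aggregate correctly cell by cell. A clean way to finish is by interpolation in $a$: the $q^N$-coefficient of each side is a polynomial in $a$ of degree at most $|\lambda|$, so it suffices to verify the identity at $|\lambda|+1$ values of $a$. Setting $a = -q^n$ for $n = 0,1,\dots,|\lambda|$, the right-hand side becomes $q^{\sum_i i\lambda_i}\prod(1-q^{n+c_{i,j}})/\prod(1-q^{h_{i,j}})$, which, by Stanley's classical formula, is the principal specialization of $s_\lambda$ up to a $q$-prefactor, while the left-hand side reduces at $a = -q^n$ to a weighted sum that can be matched by a Sagan-style sign-reversing involution on overline markings.
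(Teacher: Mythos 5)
Your overall strategy coincides with the paper's: encode a plane overpartition of shape $\lambda$ as $\ell(\lambda)$ nonintersecting paths, apply the Lindstr\"om--Gessel--Viennot lemma to get $\sum_{\Pi} a^{o(\Pi)}q^{|\Pi|}=q^{|\lambda|}\det\bigl[(-a)_{\lambda_j+i-j}/(q)_{\lambda_j+i-j}\bigr]$, and evaluate the determinant as a hook--content product. Two points of divergence are worth noting. On the encoding: the paper uses three step types (North, East, North--East), with a North--East step replacing a corner of the row overpartition's Ferrers diagram precisely when the part at that corner is overlined; this makes the rule ``only the last occurrence of an integer in a row may be overlined'' a structural feature of the path, so your ``east-step carrying an extra factor $a$'' must be restricted to corner positions for the single-path generating functions $(-a)_k/(q)_k$ to come out right. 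On the finish: the paper simply cites the known determinant evaluation (Gessel--Viennot, Krattenthaler) and separately gives a bijective proof via super semistandard tableaux, whereas you propose to deduce the evaluation from Stanley's hook--content formula by interpolation in $a$ at $a=-q^n$, $n=0,\dots,|\lambda|$. That step is legitimate --- both sides are polynomials in $a$ of degree at most $|\lambda|$ with coefficients in $\mathbb{Q}((q))$, and the $|\lambda|+1$ points $-q^n$ are distinct there --- and it amounts to running the paper's Remark~2 backwards: the paper derives Stanley's formula \emph{from} the hook--content formula via a sign-reversing involution, while your specialization needs exactly such an involution (plus the bijection with super semistandard tableaux) to identify $\sum_\Pi(-1)^{o(\Pi)}q^{|\Pi|+n\,o(\Pi)}$ with $q^{|\lambda|}s_\lambda(1,q,\dots,q^{n-1})$. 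That involution is the one genuinely nontrivial ingredient you leave as a black box; it is constructible (the paper builds it, via a nontrivial jeu-de-taquin-style algorithm), so your plan is sound, but it buys you no real economy over citing the determinant evaluation directly.
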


We prove this theorem using 
 using a correspondence between plane overpartitions and sets of nonintersecting paths that use three kinds of steps.
(The work of Brenti used similar paths to compute super Schur functions \cite{Br}.) Another way to prove
this result  is to show that  plane overpartitions of shape $\lambda$ are
in bijection with super semistandard
tableaux of shape $\lambda$. This is presented in a remark in Section \ref{nonint}.

%{\bf*****-maybe we could add something here-see the end of page 3-beginning of page 4 of the review}

We also give the weighted generating function formula for plane overpartitions
``bounded'' by $\lambda$, where by that we mean
plane overpartitions such that the $i^{th}$ row of the plane
overpartition is an overpartition that has at most $\lambda_{i}$
parts and at least $\lambda_{i+1}$ parts. Let $\mathcal{B}(\lambda)$ be the set of all such plane overpartitions.
\begin{theorem}\label{T2}
Let $\lambda$ be a partition. The weighted generating function for
plane overpartitions such that the $i^{th}$ row of the plane
overpartition is an overpartition that has at most $\lambda_{i}$
parts and at least $\lambda_{i+1}$ parts is
\begin{equation}
\sum_{\Pi \in \mathcal{B}(\lambda)} a^{o(\Pi)}q^{|\Pi|}=q^{\sum_i
(i-1)\lambda_i}\prod_{(i,j)\in\lambda}\frac{1+aq^{c_{i,j}+1}}{1-q^{h_{i,j}}}.
\end{equation}
\end{theorem}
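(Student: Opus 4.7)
My plan is to adapt the nonintersecting lattice paths method that establishes Theorem \ref{T1}. Under that correspondence, a plane overpartition of shape $\mu$ (with at most $\ell(\lambda)$ rows) is encoded by an $\ell(\lambda)$-tuple of nonintersecting three-step paths whose starting points $A_1,\dots,A_{\ell(\lambda)}$ are fixed and whose ending points $B_i$ have horizontal coordinates determined by $\mu_i$. An element of $\mathcal{B}(\lambda)$ has some shape $\mu$ satisfying $\lambda_{i+1}\le\mu_i\le\lambda_i$, and these constraints translate directly into the condition that each endpoint $B_i$ is free to range over a prescribed interval of length $\lambda_i-\lambda_{i+1}$ on its horizontal line. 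The key structural observation is that the lower bound $\mu_i\ge\lambda_{i+1}$ is precisely the horizontal-strip condition, which guarantees that the endpoint intervals of consecutive paths do not overlap.

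By the Lindstr\"om--Gessel--Viennot lemma, the weighted generating function for $\mathcal{B}(\lambda)$ becomes a determinant $\det(M_{ij})$, where $M_{ij}$ is the total weight of all three-step paths from $A_i$ to any endpoint permitted for the $j$-th path. I would then evaluate this determinant by comparison with the fixed-endpoint determinant that appears in the proof of Theorem \ref{T1}. The route I expect to work best is a bijective one: append to each path a weight-free (or uniformly weighted) horizontal tail that slides its free endpoint to the rightmost permissible column (that matching $\lambda_i$). The horizontal-strip condition ensures that no two appended tails collide, so this produces a weight-preserving bijection from the free-endpoint family for $\mathcal{B}(\lambda)$ to a modified fixed-endpoint family. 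Under this identification, each cell's content is effectively shifted by one, which replaces $(1+aq^{c_{i,j}})$ by $(1+aq^{c_{i,j}+1})$ in the hook-content product of Theorem \ref{T1}; the corresponding drop in the mandatory minimum height per column changes $q^{\sum_i i\lambda_i}$ into $q^{\sum_i(i-1)\lambda_i}$, while the hook lengths $h_{i,j}$ are preserved because the total number of horizontal and vertical steps is unaffected.

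The main obstacle I foresee is tracking the overlining statistic $o(\Pi)$ carefully through the endpoint-freeing, since overlined entries correspond to the diagonal steps of the three-step paths and one must confirm that the appended horizontal tails neither create nor destroy any diagonal steps while still correctly producing the content shift in the numerators. If the bijective argument proves too delicate, a backup plan is to evaluate $\det(M_{ij})$ algebraically: column operations combined with the telescoping identity $\prod_{j=1}^{\lambda_i}(1+aq^{c_{i,j}+1})/(1+aq^{c_{i,j}})=(1+aq^{\lambda_i-i+1})/(1+aq^{1-i})$ should express the free-endpoint determinant as the Theorem \ref{T1} determinant (for $\lambda$) multiplied by an explicit rational factor that simplifies to $q^{-|\lambda|}\prod_i(1+aq^{\lambda_i-i+1})/(1+aq^{1-i})$; comparing with the desired right-hand side of Theorem \ref{T2} then reduces the problem to a finite algebraic identity.
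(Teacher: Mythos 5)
Your proposal correctly identifies the framework the paper uses (three-step lattice paths plus the Lindstr\"om--Gessel--Viennot determinant), but it diverges from the paper in a way that creates real work you have not done, and the step where you propose to do that work does not hold up. The paper never needs free endpoints: it encodes the shape in the \emph{starting} points $(0,\lambda_1-\lambda_i+i-1)$ and sends every path to the same endpoint $(x,\lambda_1+i-1)$ with $x\to\infty$. A row with fewer than $\lambda_i$ parts is then realized by North steps at horizontal position $0$ (weight $1$), so the single-path generating function is the ``at most $k$ parts'' series $(-aq)_k/(q)_k$ of Lemma \ref{lemaop}, and LGV immediately gives $\sum_{\Pi\in\mathcal{B}(\lambda)}a^{o(\Pi)}q^{|\Pi|}=\det M_\lambda(aq;q)$ (Proposition \ref{bounded}). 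Theorem \ref{T1} is the same determinant with the ``exactly $k$ parts'' series $q^k(-a)_k/(q)_k$, giving $q^{|\lambda|}\det M_\lambda(a;q)$. Both theorems then follow from the single cited evaluation \eqref{m}, with $a$ replaced by $aq$ in the bounded case; that substitution is exactly what turns $1+aq^{c_{i,j}}$ into $1+aq^{c_{i,j}+1}$ and accounts for the missing $q^{|\lambda|}$.

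The genuine gap in your plan is the evaluation of your free-endpoint determinant, which is the entire content of the theorem and which neither of your two routes establishes. The primary route --- appending weight-free horizontal tails so that ``each cell's content is effectively shifted by one'' --- is not a correct mechanism: East steps carry weight $1$ and do not interact with the statistic $o(\Pi)$ at all, whereas the passage from Theorem \ref{T1} to Theorem \ref{T2} is the substitution $a\mapsto aq$, i.e.\ every overlined part acquires an extra factor of $q$ because in $\mathcal{B}(\lambda)$ the relevant single-row series is $\prod_{m=1}^{k}(1+aq^m)/(q)_k$ rather than $\prod_{m=0}^{k-1}(1+aq^m)\,q^k/(q)_k$. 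No rearrangement of unweighted tails can produce this. Your backup route at least targets the correct ratio (the telescoping $\prod_j(1+aq^{c_{i,j}+1})/(1+aq^{c_{i,j}})=(1+aq^{\lambda_i-i+1})/(1+aq^{1-i})$ is right), but the claim that column operations turn your matrix of interval sums into the Theorem \ref{T1} matrix times that scalar is asserted, not proved, and is not a matter of row or column rescalings. You would also need to verify the compatibility hypothesis for LGV with endpoint sets (the intervals $[\lambda_{i+1},\lambda_i]$ share endpoints and are only separated after the diagonal shift by $-i$). All of this is avoidable: keep the endpoints fixed and change the single-path series, as the paper does.
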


Note that it is enough to assign weights to overlined (or nonoverlined) parts only because generating functions where overlined and nonoverlined parts are weighted by $a$ and $b$, respectively, follow trivially from the above formulas.

%We prove these two theorems

%{\bf **** Lindstr\"om-check}

The number of nonintersecting paths is given by  a determinantal formula (Lemma 1 of  \cite{Li}). This  result was anticipated by Lindstr\"om \cite{Li} and  Karlin and McGregor \cite{KM1,KM2}, but Gessel and Viennot were first to use it for enumerative purpose of various classes of plane partitions \cite{GV, GV1}. Applying the result and evaluating the determinants we obtain hook--content formulas.  We use a simple involution to show that the Stanley hook--content formula
(Theorem 7.21.2 of \cite{St2}) follows from our formula.

From the symmetric function point of view, these formulas are given by Schur functions in a difference of two alphabets, as explained in Section 3.

%{\bf **** obscure-check}
The end of Section 3 is devoted to {\it reverse} plane overpartitions. 
%A reverse plane (over)partition is defined like a plane (over)partition with nonincreasing property substituted with %nondecreasing property and allowing 0's as entries. 
A reverse plane partition of shape $\lambda$ is a filling of cells of the Ferrers diagram of $\lambda$ with nonnegative integers that form a nondecreasing sequence along each row and each column. A reverse plane overpartition is a reverse plane partition where (1) only positive entries can be overlined, (2) in each row the last occurrence of an integer can be overlined or not and (3) in each column the first occurrence of a positive integer can be overlined or not and all others (if positive) are overlined.
An example of a reverse plane overpartition is
$$
\begin{array}{llllll}
0 & 0 & 3 & 4 & 4 & \bar{4}\\
0 & 0 & 4 & \bar{4} &&\\
1&\bar{3} & & &&\\
{3} & \bar{3} & &&\\
\end{array}
$$

It was proved by Gansner \cite{G} that the generating function for reverse plane partitions of a given shape $\lambda$ is
\begin{equation}
\prod_{(i,j)\in \lambda}\frac{1}{1-q^{h_{i,j}}}.
\label{gans}
\end{equation}

Let $\mathcal{S}^R(\lambda)$ be the set of all reverse plane overpartitions of shape $\lambda$. The generating function for reverse plane overpartitions is given by the following hook formula.
\begin{theorem} Let $\lambda$ be a partition. The generating function for reverse plane overpartitions of shape $\lambda$ is
$$
\sum_{\Pi \in \mathcal{S}^R(\lambda)}q^{|\Pi|}=\prod_{(i,j)\in
\lambda}\frac{1+q^{h_{i,j}}}{1-q^{h_{i,j}}}.
$$
\label{reverse}
\end{theorem}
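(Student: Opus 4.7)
The proposed approach is to build a weight-preserving bijection between $\mathcal{S}^R(\lambda)$ and pairs $(N, \epsilon)$, where $N\colon \lambda \to \mathbb{Z}_{\geq 0}$ is a nonnegative integer filling of the diagram and $\epsilon\colon \mathrm{supp}(N) \to \{0,1\}$ is any sign assignment on the positive cells, under the weighting $|\Pi| = \sum_{(i,j)} N_{i,j}\, h_{i,j}$. The generating function for these pairs is
\[
\prod_{(i,j) \in \lambda}\Big(1 + 2\sum_{n\geq 1} q^{n h_{i,j}}\Big) = \prod_{(i,j) \in \lambda}\frac{1+q^{h_{i,j}}}{1-q^{h_{i,j}}},
\]
which is precisely the right-hand side of Theorem~\ref{reverse}, so the theorem reduces to constructing such a bijection.

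The natural candidate is an extension of the Hillman--Grassl algorithm that Gansner uses to prove \eqref{gans}. In the classical case one iterates the following step: locate a distinguished minimal positive entry of the reverse plane partition, trace a ``hook walk'' through the diagram that decrements entries along the way, and record one unit of $N_{i,j}$ at the terminal cell $(i,j)$. To adapt this to overpartitions, I would carry the overline bit of the starting entry along the walk and deposit it as $\epsilon_{i,j}$, while simultaneously repairing the overlines at each visited cell so that the residual filling is still a valid reverse plane overpartition (nondecreasing rows and columns, plus the first-positive-in-column and last-in-row overlining rules).

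The main obstacle will be verifying that this extended hook walk is well defined and reversible. The overlining rules are subtle, especially at the turning cells of the walk---these are precisely the places where the last-in-row and first-in-column constraints interact, and they are also the cells where the classical Hillman--Grassl walk makes its only ``choice.'' A careful case analysis will be required to show that a single overline bit is consistently transmitted along the walk and, conversely, that launching a reverse hook walk in one of two modes according to $\epsilon_{i,j}$ reconstructs both the entries and the overline decorations of $\Pi$. Once injectivity and surjectivity are established, weight preservation is automatic: a single walk changes $|\Pi|$ by exactly the hook length $h_{i,j}$ of its terminal cell, so iterating yields $|\Pi| = \sum N_{i,j}\, h_{i,j}$ as required, and the generating-function identity follows at once.
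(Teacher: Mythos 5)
Your reduction of the theorem to a weight-preserving bijection with pairs $(N,\epsilon)$ is the right target: since $\frac{1+q^{h}}{1-q^{h}}=1+2\sum_{n\geq 1}q^{nh}$, a filling $N$ weighted by $\sum N_{i,j}h_{i,j}$ together with one free bit per cell of $\mathrm{supp}(N)$ does produce the claimed product. But the proof itself is not there: everything rests on the extended Hillman--Grassl walk being well defined and reversible, and you explicitly defer exactly that case analysis. One concrete difficulty you have not addressed: in the inverse Hillman--Grassl algorithm, a cell with $N_{i,j}=n$ is built up by $n$ separate hook walks, so ``carry the overline bit of the starting entry along the walk and deposit it at the terminal cell'' would naively yield $2^{\sum N_{i,j}}$ decorated objects per filling, i.e. the series $\prod\bigl(1+2q^{h}+4q^{2h}+\cdots\bigr)$, not $\prod\bigl(1+2q^{h}+2q^{2h}+\cdots\bigr)$. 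You must explain why only one bit per support cell survives --- equivalently, why the overline degrees of freedom of $\Pi$ match $|\mathrm{supp}(N)|$ rather than the number of walks --- and this is precisely where the reverse-plane-overpartition overlining rules (last-in-row, first-positive-in-column) have to be confronted, not just cited. As written, the proposal is a plan for a proof, with the essential verification missing.

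For comparison, the paper takes an entirely different route: it encodes a reverse plane overpartition of shape $\lambda$ as nonintersecting lattice paths with East, North and North--East steps whose departure points are a free $\ell$-element subset of $\{(0,i)\}$, applies Stembridge's Pfaffian formula for paths with unfixed starting points, computes the two-point functions $W(r,s)$ in closed form by induction via Lindstr\"om's determinant, and then peels off hook factors of the first column recursively. It is worth noting that the authors themselves remark in the concluding section that one ``could certainly adapt the ideas of Gansner'' to give a constructive proof in the $t=-1$ case --- so your strategy is believed to be viable --- but they pointedly do not carry it out, which is a fair indication that the case analysis you postpone is the actual content of such a proof.
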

%Okada's proof involves symmetric functions and some ideas of \cite{OR}.
We construct a bijection between reverse plane overpartitions of a given shape and sets of nonintersecting paths whose endpoints are not fixed. Using results of \cite{St} we obtain a Pfaffian formula for the generating function for reverse plane partitions of a given shape. Subsequently, we evaluate the Pfaffian and obtain a  proof of the hook formula. When $\lambda$ is the partition with $r$ parts equal to $c$, this result is the generating function formula for plane overpartitions fitting in an $r\times c$, box namely Theorem \ref{osnovna}.

In Section 4 we make a connection between plane overpartitions and domino tilings. We give some basic properties of this correspondence, such as how a removal of a box or an overline changes the corresponding tiling. This correspondence connects a measure on strict plane partitions studied in \cite{V2} to a measure on domino tilings. This connection was expected by similarities in correlation kernels, limit shapes and some other features of these measures, but the connection was not established before.

In Section 5 we propose a bijection between matrices and pairs of plane overpartitions based on ideas of 
Berele and Remmel \cite{berele:remmel:1985}. We give another stronger version of the shifted MacMahon's formula, as we give a weighted generating function for plane overpartitions with bounded entries. Let $\mathcal{L}(n)$ be the set of all plane overpartitions with the largest entry at most $n$.
\begin{theorem}\label{boundedparts}
The weighted generating functions for plane overpartitions where the
largest entry is at most $n$ is
$$
\sum_{\Pi \in
\mathcal{L}(n)}a^{o(\Pi)}q^{|\Pi|}=\prod_{j=1}^n \frac{\prod_{i=0}^n(1+aq^{i+j})}{\prod_{i=1}^{j}(1-q^{i+j-1})(1-a^2q^{i+j})}.
$$
\end{theorem}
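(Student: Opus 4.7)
The plan is to derive Theorem \ref{boundedparts} from the Berele--Remmel-style RSK correspondence introduced at the beginning of Section 5. I would construct a weight-preserving bijection between $\mathcal{L}(n)$ and a set of matrix data whose joint generating function manifestly factors as the right-hand side. This is a single-plane-overpartition adaptation of the pair-of-POs bijection established earlier in the section.

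Concretely, let $\mathcal{T}_n$ consist of triples $(A,B,C)$ where $A=(a_{ij})$ and $B=(b_{ij})$ are upper-triangular $n\times n$ arrays of nonnegative integers indexed by $(i,j)$ with $1\le i\le j\le n$, and $C=(c_{ij})$ is a $(n+1)\times n$ array of $0$/$1$ entries indexed by $0\le i\le n$ and $1\le j\le n$. I would construct a bijection $\Psi:\mathcal{L}(n)\to\mathcal{T}_n$ such that
\[
|\Pi|=\sum_{1\le i\le j\le n}(i+j-1)a_{ij}+\sum_{1\le i\le j\le n}(i+j)b_{ij}+\sum_{0\le i\le n,\,1\le j\le n}(i+j)c_{ij},
\]
and $o(\Pi)=2\sum_{ij}b_{ij}+\sum_{ij}c_{ij}$. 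The arrays $A$ and $B$ would record the ``non-overlined'' and ``paired-overline'' parts of the Berele--Remmel insertion (the $a^2$ in $B$'s weight reflecting that its corresponding overlines arise in symmetric pairs), while $C$ records the residual asymmetric ``single-overline'' choices. The extra row $i=0$ of $C$ (absent in $A$ and $B$) accounts for overline choices along a distinguished boundary (the topmost row) of $\Pi$, and is the source of the asymmetry that distinguishes the single-PO enumeration from the pair-of-POs one.

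Granted $\Psi$ and these weight identities, summing over $\mathcal{T}_n$ factors into independent geometric and binomial sums, producing
\[
\prod_{1\le i\le j\le n}\frac{1}{1-q^{i+j-1}}\cdot\prod_{1\le i\le j\le n}\frac{1}{1-a^{2}q^{i+j}}\cdot\prod_{i=0}^{n}\prod_{j=1}^{n}(1+aq^{i+j}),
\]
which is exactly the right-hand side of Theorem \ref{boundedparts}. As a sanity check, for $n=1$ this collapses to $(1+aq)(1+aq^2)/((1-q)(1-a^{2}q^{2}))=(1+aq^2)/((1-q)(1-aq))$, which agrees with a direct enumeration over hook shapes filled with entries $\leq 1$. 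The main obstacle is the construction of $\Psi$ itself: adapting the Berele--Remmel insertion to the single-PO setting so that the output data splits cleanly into $(A,B,C)$ with the stated index sets, and then verifying the weight identity --- especially the dichotomy $o(\Pi)=|C|+2|B|$ between the asymmetric and paired overline contributions. Once the bijection is in place the final sum is routine, so essentially all of the work is in Step~1.
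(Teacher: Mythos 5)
Your target set $\mathcal{T}_n$, the weight identities, and the final factorization are all correct: summing over the triples $(A,B,C)$ does produce the right-hand side, and your $n=1$ sanity check is valid. In fact $\mathcal{T}_n$ is precisely a reparametrization of the symmetric matrices $M\in\mathcal{M}_n$ (with blocks $A,B,C,D$) that the paper sums over: your $C$-rows with $i\geq 1$ are the $\{0,1\}$ block $B$ of $M$, your off-diagonal $B$-entries are the off-diagonal entries of the block $D$, your array $A$ is the upper triangle of the block $A$ of $M$ after the reindexing $i\mapsto i-1$, and your extra row $c_{0j}$ together with the diagonal entry $b_{jj}$ encodes the diagonal entry $d_{jj}$ of $D$ via the parity splitting $d_{jj}=c_{0j}+2b_{jj}$, which converts the factor $1/(1-aq^{j})$ into $(1+aq^{j})/(1-a^{2}q^{2j})$. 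So your closing computation is the same computation the paper performs.

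The genuine gap is that the bijection $\Psi$ is never constructed, and you yourself flag it as ``essentially all of the work.'' Moreover, the route you propose for it --- designing from scratch a single-plane-overpartition insertion whose output splits cleanly into $(A,B,C)$ --- is harder than necessary and is not how the paper proceeds. The paper obtains the single-overpartition bijection for free from the pair bijection by a symmetry argument: it shows that if the insertion algorithm sends $M$ to $(P,Q)$ then it sends $M^{T}$ to $(Q,P)$ (Theorem \ref{insalg}), hence $M=M^{T}$ if and only if $P=Q$, hence symmetric matrices in $\mathcal{M}_n$ are in bijection with single plane overpartitions in $\mathcal{L}(n)$ (Theorem \ref{symmetric}); the identities for $|\Pi|$ and $o(\Pi)$ are then read off from the multiplicity statements of that theorem combined with the symmetry relations $a_{ij}=a_{ji}$, $d_{ij}=d_{ji}$, $b_{ij}=c_{ji}$. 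If you replace your Step 1 by an appeal to this transpose symmetry (which you are entitled to, since you invoke the pair bijection ``established earlier in the section'') and then apply the parity splitting above, your argument closes; as written, the central combinatorial step is missing.
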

%{\bf *** wrong. needs an $a^2$}

In Section 6 we study interlacing sequences and cylindric
partitions. We say that a sequence of partitions
$\Lambda=(\lambda^1,\dots,\lambda^T)$ is \textit{interlacing} if
$\lambda^i/\lambda^{i+1}$ or $\lambda^{i+1}/\lambda^i$ is a
horizontal strip, i.e. a skew shape having at most one cell in each
column. Let $A=(A_1,\dots,A_{T-1})$ be a sequence of 0's and 1's. We
say that an interlacing sequence
$\Lambda=(\lambda^1,\dots,\lambda^T)$ has \textit{profile} $A$ if
when $A_i=1$, respectively $A_i=0$, then $\lambda^i/\lambda^{i+1}$,
respectively $\lambda^{i+1}/\lambda^{i}$ is a horizontal strip.
Interlacing sequences are generalizations of plane partitions. Indeed plane partitions  
are interlacing sequences with
profile $A=(0,\ldots,0,1,\ldots ,1)$.
\begin{figure} [htp!]
\centering
\includegraphics[height=6cm]{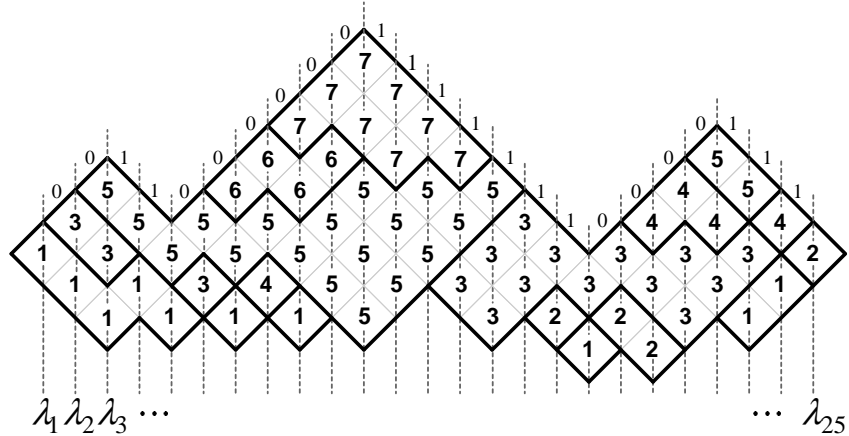}
\caption{The diagram of an interlacing sequence} \label{Fig1}
\end{figure}

%{\bf *** Double check-check}

We now define the \textit{diagram} of an interlacing sequence. See Figure \ref{Fig1}. We start with a square grid; we denote the two directions defined by the grid lines with 0 and 1. A profile  $A=(A_1,\dots,A_{T-1})$ is represented by a path of length $T+1$ on this grid where the path consists of grid edges whose directions are given by $0,A_1,A_2,\dots,A_{T-1},1$. This path forms the (upper) border of the diagram. Excluding the endpoints of the path we draw the diagonal rays (which form $45^{\circ}$ angles with grid lines) starting at the vertices of the path and we index them (from left to right) with integers from $1$ to $T$. A diagram is a connected subset of boxes of a square grid whose (upper) border is given by the profile path and along the $i^{th}$ diagonal ray there are $\ell(\lambda^i)$ boxes. The filling numbers on the $i^{th}$ diagonal ray are parts of $\lambda^i$ with the largest part at the top. Observe that by the definition of interlacing sequences we obtain monotone sequences of numbers in the direction of grid lines.

A (skew) plane partition and cylindric partition are examples of
interlacing sequences. A plane partition can be written as
$\Lambda=(\emptyset, \lambda^1,\dots,\lambda^T,\emptyset)$ with
profile $A=(0,0,\dots,0,1,\dots,1,1)$ and $\lambda^i$s are diagonals
of the plane partition. A skew plane partition is an interlacing
sequence $\Lambda=(\emptyset,\lambda^{1},\dots,\lambda^T,\emptyset)$
with a profile $A=(0,A_1,\dots,A_{T-1},1)$.  A cylindric partition is an interlacing sequence
$\Lambda=(\lambda^0,\lambda^1,\dots,\lambda^{T})$ where
$\lambda^0=\lambda^T$, and $T$ is called the period of $\Lambda$. A
cylindric partition can be represented by the \textit{cylindric
diagram} that is obtained from the ordinary diagram by
identification of the first and last diagonal.

A {\it connected component} of an interlacing sequence $\Lambda$ is
the set of rookwise connected boxes of its diagram that are
filled with a same number. We denote the number of connected
components of $\Lambda$ with $k(\Lambda)$. For the example from
Figure \ref{Fig1} we have $k(\Lambda)=18$ and its connected
components are shown in Figure \ref{Fig1} (bold lines represent
boundaries of these components).

If a box
$(i,j)$ belongs to a connected component $C$ then we define its {\it
level} $\ell(i,j)$  as the smallest positive integer such that
$(i+\ell,j+\ell)$ does not belong to $C$.
In other words, a level represents the distance from the ``rim'', distance being measured diagonally.
A {\it border component} is
a rookwise connected subset of a connected component where all boxes have the
same level. We also say that this border component is of this level.  For the example from Figure \ref{Fig1}, border components and their levels are shown
in Figure \ref{Fig2} (different levels are represented by different colors).

\begin{figure} [htp!]
\centering
\includegraphics[height=4cm]{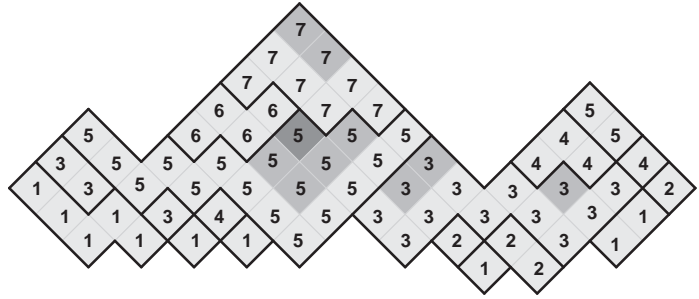}
\caption{Border components with levels} \label{Fig2}
\end{figure}

Let
$(n_1,n_2,\dots)$  be a sequence of nonnegative integers where $n_i$ is the number of $i$--level border
components of $\Lambda$. We
set
\begin{equation}
A_\Lambda(t)=\prod_{i\geq1}(1-t^{i})^{n_i}.
\label{Phi}
\end{equation}
For the example above $A_\Lambda(t)=(1-t)^{18}(1-t^2)^4(1-t^3)$.

For a cylindric partition $\Pi$, we define {\it cylindric connected
components} and {\it cylindric border components} in the same way
but connectedness is understood on the cylinder, i.e. boxes are
connected if they are rookwise connected in the cylindric diagram.
We define
$$
A_\Pi^{\text{cyl}}(t)=\prod_{i\geq1}(1-t^{i})^{n^{\text{cyl}}_i},
$$
where $n^{\text{cyl}}_i$ is the number of cylindric border components of level $i$.

In Section 6 we give a generating function formula for skew plane partitions.
Let $\text{Skew}(T,A)$ be the set of all skew plane partitions
$\Lambda=(\emptyset,\lambda^1,\dots,\lambda^T,\emptyset)$ with
profile $A=(A_0,A_1,\dots,A_{T-1},A_T)$, where $A_0=0$ and $A_T=1$.
\begin{theorem}(Generalized MacMahon's formula for skew
plane partitions; Hall-Littlewood case)
\begin{equation*}
\sum_{\Pi \in \text{Skew}(T,A)}A_\Pi(t)q^{|\Pi|}=
\prod_{\substack{0\leq i< j\leq
T\\A_i=0,\,A_j=1}}\frac{1-tq^{j-i}}{1-q^{j-i}}.
\end{equation*}
\label{skew}
\end{theorem}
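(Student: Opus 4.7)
The plan is to adapt the Hall--Littlewood half-vertex operator approach used by Vuleti\'c in [V2] for Theorem \ref{mirjana}, extending it from the fixed profile $(0,\dots,0,1,\dots,1)$ to an arbitrary $A$. I would work on the Fock space spanned by $\{|\lambda\rangle\}$ indexed by partitions, equipped with two families of operators $\Gamma_-(x)$ and $\Gamma_+(x)$ whose matrix elements on horizontal-strip transitions are the Hall--Littlewood Pieri coefficients $\varphi_{\mu/\lambda}(t)$ and $\psi_{\mu/\lambda}(t)$, weighted by $x^{|\mu|-|\lambda|}$. The essential algebraic input is the commutation relation
\[
\Gamma_+(x)\Gamma_-(y) = \frac{1-txy}{1-xy}\,\Gamma_-(y)\Gamma_+(x),
\]
together with the vacuum identities $\Gamma_+(x)|\emptyset\rangle = |\emptyset\rangle$ and $\langle\emptyset|\Gamma_-(y)=\langle\emptyset|$.

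The key combinatorial input, imported from [V2], is that for any interlacing sequence $\Lambda$, the product of Hall--Littlewood Pieri coefficients along its transitions equals the border-component weight $A_\Lambda(t) = \prod_i(1-t^i)^{n_i}$. This identification is local to a single transition $\lambda^i \leftrightarrow \lambda^{i+1}$ and records, factor by factor, the levels of the boxes bordering that transition; since its statement does not refer to the ambient profile, it transfers verbatim to the skew setting.

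With these ingredients in hand, I would set $\Gamma_0 := \Gamma_-$, $\Gamma_1 := \Gamma_+$, and choose the specialization $\phi_i = q^{1/2-i}$ when $A_i = 0$ and $\phi_i = q^{i-1/2}$ when $A_i = 1$. A short telescoping check shows that with this choice
\[
\sum_{\Pi \in \text{Skew}(T,A)} A_\Pi(t)\,q^{|\Pi|} = \langle\emptyset|\,\Gamma_{A_T}(\phi_T)\Gamma_{A_{T-1}}(\phi_{T-1}) \cdots \Gamma_{A_0}(\phi_0)\,|\emptyset\rangle,
\]
the point being that $\sum_i \pm(|\lambda^{i+1}|-|\lambda^i|)\log_q\phi_i$ collapses to $\sum_{j=1}^T |\lambda^j| = |\Pi|$. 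I would then move every $\Gamma_+$ rightward past every $\Gamma_-$ using the commutation relation; each swap of an adjacent pair indexed by $i<j$ with $A_i=0$ and $A_j=1$ contributes the scalar $(1-t\phi_i\phi_j)/(1-\phi_i\phi_j) = (1-tq^{j-i})/(1-q^{j-i})$, which is exactly the $(i,j)$ factor of the product in the theorem. After the reordering, the $\Gamma_+$'s on the right stabilize $|\emptyset\rangle$ and the $\Gamma_-$'s on the left stabilize $\langle\emptyset|$, so the matrix element collapses to the accumulated product.

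The main obstacle is verifying the combinatorial identification of step two for a general profile. In the plane-partition case treated in [V2], the profile is V-shaped and every border component is naturally cut by the diagonals in a predictable way. For a zig-zagging $A$ the diagonal structure can slice a single border component into several pieces across several consecutive transitions, and one must check that the Pieri coefficients along those adjacent transitions combine to contribute a single $(1-t^\ell)$ factor for each border component of level $\ell$. Once this local compatibility is verified, the remaining argument is a purely formal manipulation of vertex operators and specializes naturally to give the cylindric generalization (Theorem~\ref{uvodcyl}) by replacing the vacuum matrix element with a trace.
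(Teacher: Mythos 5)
Your vertex-operator computation is, up to notation, exactly the argument the paper runs: the vacuum expectation $\langle\emptyset|\,\Gamma_{A_T}(\phi_T)\cdots\Gamma_{A_0}(\phi_0)\,|\emptyset\rangle$ is the paper's partition function $Z_{\text{skew}}$ written as a sum of products of skew Hall--Littlewood $P$'s and $Q$'s under one-variable evaluations, and moving every $\Gamma_+$ past every $\Gamma_-$ is precisely Proposition 2.2 of \cite{V2}, which the paper invokes and then evaluates via \eqref{h} to get one factor $(1-tq^{j-i})/(1-q^{j-i})$ per pair $i<j$ with $A_i=0$, $A_j=1$. Your specialization and the telescoping of the $q$-exponents are also correct. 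So the algebraic half of the proposal is sound and coincides with the paper's route.

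The genuine gap is the step you flag yourself at the end and do not close: the identity $\Phi_\Lambda(t)=A_\Lambda(t)$, i.e.\ that the product of the Pieri coefficients $\varphi_{\lambda^{i+1}/\lambda^i}(t)$ and $\psi_{\lambda^i/\lambda^{i+1}}(t)$ along the transitions equals the border-component weight. Contrary to your second paragraph, this does \emph{not} transfer verbatim from \cite{V2}: there it is proved only for the V-shaped profile of an ordinary plane partition, and for a general interlacing sequence the correct statement carries a boundary correction, $b_{\lambda^1}\Phi_\Lambda=A_\Lambda$. The "local, factor-by-factor" reading is exactly what fails when a zig-zagging profile slices one border component across several transitions, because each Pieri factor $(1-t^{m_i})$ records a multiplicity in a single diagonal, not the level of a whole component. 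The paper closes this not by a local check but by a reduction: both $A$ and $\Phi$ are multiplicative under concatenation by \eqref{APhiproiz} and behave predictably under reversal by \eqref{APhirev}, and the staircase sequence $\langle\lambda\rangle$ satisfies $A_{\langle\lambda\rangle}=\Phi_{\langle\lambda\rangle}=b_\lambda$ by \eqref{APhiseq}; this reduces the claim to constant profiles, and a constant-profile sequence $\Lambda$ is padded to the honest plane partition $\langle\lambda^1\rangle\cdot\Lambda\cdot\overline{\langle\lambda^T\rangle}$, where \eqref{APhiplane} from \cite{V2} applies. For skew plane partitions the sequence starts at $\emptyset$, so the boundary factor is $b_\emptyset=1$ and one gets Corollary \ref{corskew}. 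Without some such argument your proof is incomplete at its one load-bearing combinatorial step.
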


Note that as profiles are words in $\{0,1\}$, a profile
$A=(A_0,\dots,A_{T})$ encodes the border of a Ferrers diagram
$\lambda$. Skew plane partitions of profile $A$ are in one--to--one
correspondence with reverse plane partitions of shape $\lambda$.
Moreover, one can check that
$$
\prod_{\substack{0\leq i< j\leq
T\\A_i=0,\,A_j=1}}\frac{1-tq^{j-i}}{1-q^{j-i}}= \prod_{(i,j)\in
\lambda}\frac{1-tq^{h_{i,j}}}{1-q^{h_{i,j}}}.
$$
Therefore the theorem
of Gansner (equation \eqref{gans}) is Theorem \ref{skew} with $t=0$ and
our Theorem \ref{reverse} on reverse plane
overpartitions  is Theorem \ref{skew} with $t=-1$.

This theorem is also a generalization of results of Vuleti\'{c}
\cite{V2}. In \cite{V2} a 2--parameter generalization of MacMahon's formula related to Macdonald symmetric functions was given and the formula is especially simple in the Hall-Littlewood case. In the Hall-Littlewood case, this is a generating function formula for plane partitions weighted by $A_{\Pi}(t)$. Theorem \ref{skew} can be naturally generalized to the Macdonald case, but we do not pursue this here. 

Let $\text{Cyl}(T,A)$ be the set of all cylindric partitions with period $T$ and profile $A=(A_1,\dots,A_T)$. The main result of Section 6 is:
\begin{theorem}(Generalized MacMahon's formula for cylindric partitions; Hall-Littlewood case)\label{uvodcyl}
\begin{equation*}
\sum_{\Pi \in
\text{Cyl}(T,A)}A^{\text{cyl}}_\Pi(t)q^{|\Pi|}=\prod_{n=1}^{\infty}\frac{1}{1-q^{nT}}
\prod_{\substack{1\leq i,j\leq
T\\A_i=0,\,A_j=1}}\frac{1-tq^{(j-i)_{(T)}+(n-1)T}}{1-q^{(j-i)_{(T)}+(n-1)T}},
\end{equation*}
where $i_{(T)}$ is the smallest positive integer such that $i\equiv
i_{(T)} \text{ mod }T$.
\end{theorem}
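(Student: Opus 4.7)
The plan is to express the weighted sum as a trace of a product of Hall--Littlewood vertex operators on Fock space, generalizing Borodin's approach for the unweighted case $t=0$ in \cite{Bo} and paralleling the Fock--space proof of Theorem \ref{skew}. The cyclic identification $\lambda^0=\lambda^T$ in a cylindric partition corresponds to taking a trace rather than a matrix element between vacuum vectors.

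First I would establish the local factorization: by the combinatorial interpretation of single-variable Hall--Littlewood skew functions on horizontal strips developed in \cite{V2}, one can choose powers $x_i$ of $q$ such that for every cylindric partition $\Pi=(\lambda^0,\lambda^1,\dots,\lambda^T)$ with $\lambda^0=\lambda^T$,
$$
A_\Pi^{\text{cyl}}(t)\,q^{|\Pi|} \;=\; \prod_{i=1}^{T} \phi_{A_i}\!\bigl(\lambda^{i-1},\lambda^{i};x_i,t\bigr),
$$
where $\phi_0(\mu,\lambda;x,t)=Q_{\lambda/\mu}(x;t)$ and $\phi_1(\mu,\lambda;x,t)=P_{\mu/\lambda}(x;t)$. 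This is the same factorization used in Section~6 to prove Theorem \ref{skew}, now applied to the cyclic diagram. Next I would introduce operators $\Gamma_\pm(x;t)$ on the Fock space spanned by $\{|\lambda\rangle\}_\lambda$, with matrix elements $\langle\lambda|\Gamma_-(x;t)|\mu\rangle=Q_{\lambda/\mu}(x;t)$ and $\langle\mu|\Gamma_+(x;t)|\lambda\rangle=P_{\lambda/\mu}(x;t)$, satisfying the Hall--Littlewood commutation relation
$$
\Gamma_+(x;t)\,\Gamma_-(y;t) \;=\; \frac{1-txy}{1-xy}\,\Gamma_-(y;t)\,\Gamma_+(x;t),
$$
with $\Gamma_-$'s (respectively $\Gamma_+$'s) mutually commuting. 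The local factorization then rewrites the left-hand side of the theorem as
$$
\sum_{\Pi\in\text{Cyl}(T,A)} A_\Pi^{\text{cyl}}(t)\,q^{|\Pi|} \;=\; \operatorname{tr}\!\bigl(\Gamma_{A_1}(x_1;t)\,\Gamma_{A_2}(x_2;t)\cdots\Gamma_{A_T}(x_T;t)\bigr).
$$

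To evaluate this trace I would normal order the product: commuting every $\Gamma_+$ to the right of every $\Gamma_-$ picks up one factor $(1-tx_iy_j)/(1-x_iy_j)$ for each pair $i<j$ with $A_i=0,\,A_j=1$, producing the $n=1,\ i<j$ contributions with exponent $(j-i)_{(T)}=j-i$. Cyclicity of the trace then allows each $\Gamma_+$ to be wrapped around through the remaining $\Gamma_-$'s; one complete wrap multiplies the argument by $\prod_i x_i$, which is a power of $q^T$. The first wrap produces the $i>j,\ n=1$ contributions with exponent $(j-i)_{(T)}=T-(i-j)$, and the $n$-th wrap produces the contribution with exponent $(j-i)_{(T)}+(n-1)T$. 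Summing the resulting geometric series over $n\ge 1$ yields the cylindric product in the statement, while the residual trace of the fully normal-ordered identity on Fock space contributes the MacMahon-type prefactor $\prod_{n\ge 1}(1-q^{nT})^{-1}$.

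The main obstacle will be the bookkeeping of the cyclic commutation: one must justify the convergence of the wrap-around geometric series as a formal power series in $q$, track carefully which pairs $(i,j)$ acquire factors at each wrap level, and verify that the local Hall--Littlewood factorization indeed produces the cyclic weight $A_\Pi^{\text{cyl}}(t)$ (with border components connected across the cylinder) rather than the non-cyclic weight $A_\Pi(t)$ from Theorem \ref{skew}. Once these points are settled, the calculation is parallel to the skew case, and Theorem \ref{skew} is recovered by replacing the trace with $\langle\emptyset|\cdot|\emptyset\rangle$, which suppresses every wrap beyond the first and restricts the pair product to $i<j$.
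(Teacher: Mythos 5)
Your plan is essentially the paper's own proof written in vertex-operator language: the commutation relation you posit for $\Gamma_\pm$ is exactly the skew Cauchy identity (\ref{zamena}) used in Proposition \ref{Z}, your ``wrap-around'' of the trace is the paper's iteration $Z_{\text{cyl}}(R^-,R^+)=H(q\operatorname{sh}R^-;R^+)Z_{\text{cyl}}(q\operatorname{sh}R^-,R^+)$ applied $T$ times and then infinitely often, the residual trace is the limit $\lim_{n}Z_{\text{cyl}}(q^{nT}R^-,R^+)=\prod_{n\ge1}(1-q^{nT})^{-1}$, and the local factorization of $A^{\text{cyl}}_\Pi(t)q^{|\Pi|}$ via one-variable skew Hall--Littlewood functions is the content of (\ref{spec}), (\ref{specP}) and Corollary \ref{corcyl}. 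The obstacles you flag (convergence of the wrap as a formal power series, the bookkeeping of which pairs acquire factors, and that $\Phi_\Lambda$ gives the cyclic rather than the ordinary weight) are precisely the points the paper settles, so the proposal is correct and follows the same route.
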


The case $t=0$ is due to Borodin and represents a generating function formula for cylindric partitions. Cylindric partitions were introduced and enumerated
by Gessel and Krattenthaler \cite{GK}. The result of Borodin could be
also proven using Theorem 5 of \cite{GK} and the ${\rm SU}(r)$-extension
of Bailey's $_6\psi_6$ summation due to Gustafson (equation (7.9)
in \cite{GK}) \cite{Kr}. Again Theorem \ref{uvodcyl} can be naturally
generalized to the Macdonald case. The trace generating function
of those cylindric partitions could also be easily derived from our proof,
as done by Okada \cite{O} for the reverse plane partitions case.
\\

The paper is organized as follows. In Section 2 we
give a mostly combinatorial proof of the generalized MacMahon formula. In Section 3 we use nonintersecting paths and obtain the hook--length formulas for plane overpartitions and reverse plane partitions of a given shape. In Section 4 we make the connection between tilings and plane overpartitions.
In Section 5 we construct a bijection between matrices and pairs of plane overpartitions and obtain a generating function formula for plane overpartitions with bounded part size. In Section 6 we give the hook formula for reverse plane partitions contained in a given shape and the 1--parameter generalization of the generating function formula for cylindric partitions.  
Section 7 contains some concluding remarks.\\

{\bf Acknowledgment.} The authors want to thank Alexei Borodin, the
advisor of the third author, for help and guidance and C\'{e}dric Boutillier, Dominique
Gouyou-Beauchamps, Richard Kenyon and Jeremy Lovejoy for
useful discussions. The authors also want to thank one of the anonymous
referee for his long list of constructive comments.

%%%%%%%%%%%%%%%%%%%%%%%%%%%%%%%%
\section{Plane partitions and Hall--Littlewood  functions}
%%%%%%%%%%%%%%%%%%%%%%%%%%%%%%%%

%\subsection{A proof of the generalized MacMahon formula}

In this section, we give an alternative proof of the generalization
of MacMahon's formula due to the third author \cite{V2}. Our proof is
mostly combinatorial as it uses a bijection between plane partitions
and pairs of strict plane partitions of the same shape and the
combinatorial description of Hall--Littlewood functions
(\cite{Mac}, Chapter III, Equation (5.11)).

Let ${\mathcal P}(r,c)$ be the set of plane partitions with at most
$r$ rows and $c$ columns. Given a plane partition $\Pi$, let
$A_\Pi(t)$ be the polynomial defined in \eqref{Phi}, as
$A_\Pi(t)=\prod_{r\ {\rm border}\ {\rm component}}(1-t^{{\rm level}(r)})$.

Recall that Theorem \ref{mirjana} states that
$$
\sum_{\Pi\in {\mathcal P}(r,c)}A_\Pi(t)q^{|\Pi|}
=\prod_{i=1}^r\prod_{j=1}^c \frac{1-tq^{i+j-1}}{1-q^{i+j-1}}.
$$

Any plane partition $\Pi$ is in bijection with a sequence of
partitions $(\pi^{(1)},\pi^{(2)},\ldots)$. This sequence is such
that $\pi^{(i)}$ is the shape of the entries greater than or equal
to $i$ in $\Pi$ for all $i$. For example if
$$\Pi=\begin{array}{l}
4433\\
3332\\
1 \end{array},$$
the corresponding sequence is
$((4,4,1),(4,4), (4,3), (2))$.

Note that the plane partition $\Pi$ is column strict if and only
if  $\pi^{(i)}/
\pi^{(i+1)}$ is a horizontal strip for all $i$.

We use a bijection between pairs of column strict
plane partitions $(\Sigma,\Lambda)$ and plane partitions $\Pi$ due to Bender and Knuth \cite{BK}.
We suppose that $(\Sigma,\Lambda)$ are
of the same shape $\lambda$
and that the corresponding sequences are $(\sigma^{(1)},\sigma^{(2)},\ldots)$
and $(\lambda^{(1)},\lambda^{(2)},\ldots)$

Given a plane partition $\Pi=(\Pi_{i,j})$, we define the entries of
diagonal $x$ to be the partition $(\Pi_{i,j})$ with $i,j\ge 1$ and
$j-i=x$. The bijection is such that the entries of diagonal $x$ of
$\Pi$ are $\sigma^{(x+1)}$ if $x\ge 0$ and $\lambda^{(-x-1)}$
otherwise. Note that as $\Lambda$ and $\Sigma$ have the same shape,
the entries
of the main diagonal ($x=0$) are $\sigma^{(1)}=\lambda^{(1)}$.

For example, start with
$$
\Sigma=\begin{array}{l}
4444\\
2221\\
111\end{array} \text{ and } \Lambda=\begin{array}{l}
4433\\
3322\\
111\end{array},
$$
whose sequences are $((4,4,3),(4,3), (4), (4))$
and $((4,4,3),(4,4), (4,2), (2))$, respectively  and get
$$
\Pi=\begin{array}{l}
4444\\
443\\
443\\
22\end{array}.
$$

This construction implies that:
\begin{eqnarray*}
|\Pi|&=&|\Sigma|+|\Lambda|-|\lambda|\\
A_\Pi(t)&=&\frac{\varphi_{\Sigma}(t)\varphi_{\Lambda}(t)}{b_{\lambda}(t)}.
\end{eqnarray*}
Here we have
$$b_\lambda(t)=\prod_{i\ge 1}\varphi_{m_i(\lambda)}(t), \ \ \  \varphi_r(t)=\prod_{j=1}^{r}(1-t^j),$$
and
$$
\varphi_{\Lambda}(t)=\prod_{i\ge 1} \varphi_{\lambda^{(i)}/
\lambda^{(i+1)}}(t).
$$
Moreover
$$
\varphi_\theta(t)=\prod_{i\in I}(1-t^{m_i(\lambda)}),
$$
where $\theta$ is a horizontal strip $\lambda/\mu$,$m_i(\lambda)$ is the multiplicity of $i$ in $\lambda$ and $I$
is the set of integers such that $\theta'_i=1$ and
$\theta'_{i+1}=0$. See \cite{Mac}, Chapter III Sections 2 and 5.

Indeed, the following statements are true.
\begin{itemize}
\item Each factor $(1-t^i)$ in $b_{\lambda}(t)$ is in one-to-one
correspondence with a border component of level $i$ that goes through the
main diagonal  of $\Pi$.
\item Each factor $(1-t^i)$ in $\varphi_{\Sigma}(t)$ is in one-to-one
correspondence with a border component
of level $i$ that ends in a non-negative
diagonal.
\item Each factor $(1-t^i)$ in $\varphi_{\Lambda}(t)$ is in one-to-one
correspondence with a border component of level $i$ that starts in a non-positive
diagonal.
\end{itemize}

Continuing with our example,
we have
$$
\varphi_\Sigma(t)=(1-t)^2(1-t^2),\ \ \ \varphi_\Lambda(t)=(1-t)^3(1-t^2),\ \ \
b_{\lambda}(t)=(1-t)^2(1-t^2),
$$
and
$$
A_\Pi(t)=(1-t)^3(1-t^2)=\frac{(1-t)^2(1-t^2)(1-t)^3(1-t^2)}{(1-t)^2(1-t^2)}.
$$

We recall the combinatorial definition of the Hall--Littlewood
functions following Macdonald \cite{Mac}. The
Hall-Littlewood function $Q_\lambda(x;t)$ can be defined as
$$
Q_\lambda(x;t)=\sum_{\begin{subarray}{c}\Lambda \\
{\rm sh}(\Lambda)=\lambda\end{subarray}}\varphi_\Lambda(t)x^\Lambda;
$$
where $x^\Lambda=x_1^{\alpha_1}x_2^{\alpha_2}\ldots$ and $\alpha_i$ is the number
of entries equal to $i$ in $\Lambda$. See \cite{Mac} Chapter III,
equation (5.11).

A direct consequence of the preceding bijection is that the entries
of $\Sigma$ are less than or equal to $c$, and the entries of $\Lambda$
are less than or equal to $r$ if and only if  $\Pi$ is in ${\mathcal
P}(r,c)$. Therefore~:
$$
\sum_{\Pi\in {\mathcal P}(r,c)}A_\Pi(t)q^{|\Pi|}=
\sum_{\lambda}\frac{Q_{\lambda}(q,\ldots,q^r,0,\ldots;t)
Q_{\lambda}(q^0,\ldots,q^{c-1},0,\ldots;t)}{b_{\lambda}(t)}.
$$

Finally, we need equation (4.4) in Chapter III of \cite{Mac}.
\begin{equation}
\sum_{\lambda} \frac{Q_\lambda(x;t)Q_\lambda(y;t)}{b_{\lambda}(t)}=
\prod_{i,j}\frac{1-tx_iy_j}{1-x_iy_j};
\label{cauchy}
\end{equation}

With the substitutions $x_i=q^i$ for $1\le i\le r$
and $0$ otherwise and   $y_j=q^{j-1}$ for $1\le j\le c$
and $0$ otherwise, we get the result.
%\end{proof}

%%%%%%%%%%%%%%%%%%%%%%%%%%%
\section{Nonintersecting paths}\label{nonint}
%%%%%%%%%%%%%%%%%%%%%%%%%%%

\subsection{Plane overpartitions of a given shape}\label{3.1}

In this section we represent plane overpartitions as nonintersecting paths. 
We use the determinantal formula for the number of nonintersecting paths (see \cite{GV1,KM1,KM2,Li} ).  
Evaluating these determinants we obtain the hook--content formulas from Theorems
\ref{T1} and \ref{T2}. A similar approach was used for example in \cite{Br} to
compute super Schur functions.

%We use a triangular lattice made of North, East and North-East
%edges. This means that the lattice vertices are $\Bbb{N}\times
%\Bbb{N}$ and lattice edges joining $(i,j)$ with $(i+1,j)$, $(i,j+1)$
%and $(i+1,j+1)$ for every $(i,j)\in \Bbb{N}\times \Bbb{N}$. Each
%edge has a weight. The edge from $(i,j)$ to $(i+1,j)$ has weight 1,
%the edge from $(i,j)$ to $(i,j+1)$ has weight $q^i$ and the edge
%from $(i,j)$ to $(i+1,j+1)$ has weight $aq^{i+1}$. This then implies
%that the generating function for overpartitions that have at most $n$
%parts and are weighted by $a^k$, where $k$ is number of overlined
%parts is
%$$
%\frac{(-aq)_n}{(q)_n}.
%$$
%The part $(-aq)_n$ comes from the fact that we can choose $k$ edges
%from East and North-East edges and the part $(q)_n$  comes from the
%fact that we can choose arbitrarily many
%
%There is a standard way to represent standard plane partitions by
%non intersecting paths on a square lattice made of North and East
%edges cite..... In Figure .... a plane partition is shown a plane
%partition and corresponding non intersecting paths. The paths
%correspond to the border of Ferrer's diagram of each row partition.
%For plane overpartitions we introduce North-East edges to represent
%overlined parts. We describe the procedure in more detail below.

We construct a bijection between the set of paths from $(0,0)$ to
$(x,k)$ and the set of overpartitions with at most $k$ parts and the largest  part at most $x$. 
Given an overpartition the corresponding path
consists of North and East edges that form the border of the Ferrers
diagram of the overpartition except for corners containing an
overlined entry where we substitute a pair of North and East edges
with an North--East edge. For example, the path corresponding
to the overpartition $(6,\bar{6},4,4,\bar{3})$
is shown in
Figure \ref{path}. Note that this construction appears also in
Proposition 2.2 of \cite{Br}.

\begin{figure} [htp!]
\centering
\includegraphics[height=3cm]{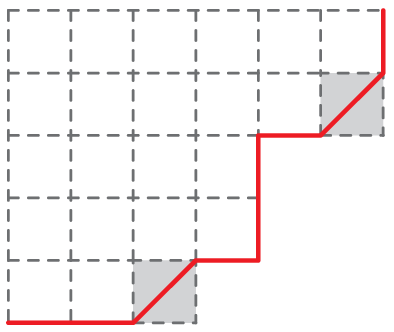}
\caption{Paths and overpartitions} \label{path}
\end{figure}

To each overpartition $\lambda$ we associate a weight equal to
$a^{o(\lambda)}q^{|\lambda|}$, where $o(\lambda)$ is the number of overlined parts. To have the same weight on the
corresponding path  we
introduce the following weights on edges. We assign weight 1 to East
edges, $q^i$ to North edges on (vertical) level $i$ and weight $aq^{i+1}$ to
North-East edges joining vertical levels $i$ and $i+1$.
The weight of the path is equal to the product of weights of its edges.

We will need the following lemma.
\begin{lemma}\cite{CL}  \label{lemaop}The generating function for overpartitions with at
most $k$ parts is given by
\begin{equation}\label{genoverpartitions}
\sum_{l(\lambda)\leq
k}{a^{o(\lambda)}q^{|\lambda|}}=\frac{(-aq)_k}{(q)_k}
\end{equation}
and the generating function for overpartitions with exactly $k$ parts is given by
\begin{equation}\label{genoverpartitionsprim}
\sum_{l(\lambda)=
k}{a^{o(\lambda)}q^{|\lambda|}}=q^k\frac{(-a)_k}{(q)_k}.
\end{equation}
\end{lemma}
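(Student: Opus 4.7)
The plan is to prove both identities in Lemma \ref{lemaop} by the standard two-component decomposition of an overpartition, together with the $q$-binomial theorem.

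First I would set up the bijection underlying the proof. Given an overpartition $\lambda$ with at most $k$ parts, let $\mu$ be the subpartition consisting of the overlined parts, read in weakly decreasing order, and let $\nu$ be the subpartition of non-overlined parts. Because in an overpartition each value can be overlined at most once (only the last occurrence of a given integer may receive an overline), the parts of $\mu$ are strictly decreasing; the parts of $\nu$ are weakly decreasing by construction. Conversely, any pair $(\mu,\nu)$ with $\mu$ strict and $\nu$ ordinary can be merged into a unique overpartition by sorting and overlining the last occurrence of each value appearing in $\mu$. Under this correspondence $|\lambda|=|\mu|+|\nu|$, $o(\lambda)=\ell(\mu)$, and $\ell(\lambda)=\ell(\mu)+\ell(\nu)$.

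Next I would record the two classical generating functions needed: partitions with at most $m$ parts are enumerated by $1/(q)_m$, partitions with exactly $m$ parts by $q^{m}/(q)_m$, and strict partitions with exactly $j$ parts by $q^{j(j+1)/2}/(q)_j$ (the minimal such strict partition is $1+2+\dots+j$, and subtracting this staircase leaves an arbitrary partition with at most $j$ parts). Conditioning on $j=o(\lambda)=\ell(\mu)$ and summing over $j=0,\dots,k$, the decomposition above yields
\begin{equation*}
\sum_{\ell(\lambda)\le k}a^{o(\lambda)}q^{|\lambda|}
=\sum_{j=0}^{k}\frac{a^{j}q^{j(j+1)/2}}{(q)_{j}}\cdot\frac{1}{(q)_{k-j}},
\qquad
\sum_{\ell(\lambda)=k}a^{o(\lambda)}q^{|\lambda|}
=\sum_{j=0}^{k}\frac{a^{j}q^{j(j+1)/2}}{(q)_{j}}\cdot\frac{q^{k-j}}{(q)_{k-j}}.
\end{equation*}

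Finally I would invoke the $q$-binomial theorem in the form $(-z;q)_k=\sum_{j=0}^{k}\genfrac{[}{]}{0pt}{}{k}{j}_{q}z^{j}q^{j(j-1)/2}$, where $\genfrac{[}{]}{0pt}{}{k}{j}_{q}=(q)_k/((q)_j(q)_{k-j})$. Applied with $z=aq$ it turns the first sum into $(-aq)_{k}/(q)_{k}$, proving \eqref{genoverpartitions}. For the second sum, factor out $q^{k}$, rewrite $q^{j(j+1)/2-j}=q^{j(j-1)/2}$, and apply the same identity with $z=a$ to obtain $q^{k}(-a)_{k}/(q)_{k}$, proving \eqref{genoverpartitionsprim}. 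The only point demanding care is the bijection between overpartitions and pairs $(\mu,\nu)$, where one must verify that the strictness of $\mu$ is forced exactly by the overpartition convention on last occurrences; the remainder of the argument is a direct specialization of a known $q$-series identity.
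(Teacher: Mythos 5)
Your proof is correct. Note that the paper does not prove this lemma at all --- it is quoted with a citation to Corteel--Lovejoy [CL] --- so there is no in-paper argument to compare against; your decomposition of an overpartition into the strict partition of overlined parts and the ordinary partition of non-overlined parts, followed by the finite $q$-binomial theorem $(-z;q)_k=\sum_{j=0}^k\genfrac{[}{]}{0pt}{}{k}{j}_q q^{j(j-1)/2}z^j$, is the standard route and all the bookkeeping (the staircase shift $q^{j(j+1)/2}/(q)_j$ for strict partitions with exactly $j$ parts, the split $\ell(\nu)\le k-j$ versus $\ell(\nu)=k-j$, and the substitutions $z=aq$ and $z=a$) checks out.
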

%\begin{proof}
%Let $\lambda$ be an overpartition seen as the corresponding path.
%Let $r_{k-i}(\lambda)$ be the number of North--East edges joining horizontal levels
%$i$ and $i+1$. Obviously, it takes values 0 or 1. Let $s_{k-i}(\lambda)$
%be the number of East edges on (horizontal) level $i$. The weight of $\lambda$ is given by $\prod_{i=1}^k(aq^i)^{r_i}
%(q^{i})^{s_i}$. Then
%\begin{equation*}
%\sum_{l(\lambda)\leq
%k}{a^{o(\lambda)}q^{|\lambda|}}=\prod_{i=1}^{k}\sum_{r_i=0,1}(aq^i)^{r_i}
%\sum_{s_i=0}^{\infty}\left(q^i\right)^{s_i}=\frac{(-aq)_k}{(q)_k}.
%\end{equation*}

%Now, if $\lambda$ is an overpartition with exactly $k$ parts then
%$r_k(\lambda)$ or $s_k(\lambda)$ must be nonzero. Then using
%(\ref{genoverpartitions}) we obtain
%$$
%\sum_{l(\lambda)=
%k}{a^{o(\lambda)}q^{|\lambda|}}=q^k\left[\frac{(-aq)_k}{(q)_k}
%+a\frac{(-aq)_{k-1}}%{(q)_{k-1}}\right]=q^k\frac{(-%a)_k}{(q)_k}.
%$$

%\end{proof}

For a plane overpartition $\Pi$ of shape $\lambda$ we construct a
set of nonintersecting paths using paths from row overpartitions
where the starting point of the path corresponding to the $i$th row
is shifted upwards by $\lambda_1-\lambda_i+i-1$ so that the starting
point is $(0,\lambda_1-\lambda_i+i-1)$. In that way, we obtain a
bijection between the set of nonintersecting paths from
$(0,\lambda_1-\lambda_i+i-1)$ to $(x,\lambda_1+i-1),$ where $i$ runs
from 1 to $\ell(\lambda)$, and the set of plane overpartitions whose
$i$th row  has at most $\lambda_i$ parts and at least
$\lambda_{i-1}$ parts with $x$ greater or equal to the largest part.
The weights of this set of nonintersecting paths (the product of weights of its paths) is equal to
$a^{o(\Pi)}q^{|\Pi|}$. Figure \ref{Fig4} (see also Figure
\ref{Fig6}) shows the corresponding set of nonintersecting paths for
$x=8$ and the plane overpartition
$$
\begin{array}{lllll}
7&4&\bar{3}&2&\bar{2}\\
3&3&\bar{3}&\bar{2}\\
\bar{3}&2&\bar{1}&&\\
2&&&&
\end{array}.
$$

\begin{figure} [htp!]
\centering
\includegraphics[height=4cm]{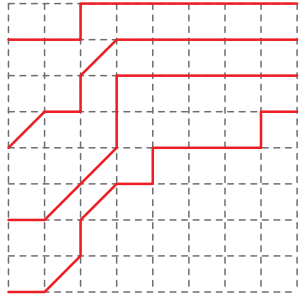}
\caption{Nonintersecting paths} \label{Fig4}
\end{figure}

%For example the non-intersecting paths corresponding to
%%\input{plane.pstex_t}
%correspond to the plane overpartition $
%\begin{array}{lll}
%8 & \bar{8} & 5\\
%\bar{8} & 7 & \bar{4}\\
%6 & \bar{3} &
%\end{array}$.
%is shown on Figure 2.

\begin{definition}\label{DefM}
For a partition $\lambda$ we define $M_\lambda(a;q)$ to be the
$\ell(\lambda)\times \ell(\lambda)$ matrix whose $(i,j)$th entry is
given by
$$
\frac{(-a)_{\lambda_j+i-j}}{(q)_{\lambda_j+i-j}}.
$$
\end{definition}

For a partition $\lambda$ let $\mathcal{B}(\lambda)$ and $\mathcal{S}(\lambda)$ be as in the introduction, i.e. the sets of all plane overpartitions bounded by shape $\lambda$ and of the shape $\lambda$, respectively.

\begin{proposition}\label{bounded} Let $\lambda$ be a partition. The weighted generating function for plane overpartitions whose
$i^{th}$ row is an overpartition that has
at most $\lambda_{i}$ parts and at least $\lambda_{i+1}$ parts  is
given by
$$
\sum_{\Pi \in \mathcal{B}(\lambda)} a^{o(\Pi)}q^{|\Pi|}= \det
M_{\lambda}(aq;q).
$$
\end{proposition}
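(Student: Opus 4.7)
The plan is to apply the Lindström–Gessel–Viennot (LGV) lemma to the nonintersecting path model just established. That bijection identifies each $\Pi\in\mathcal{B}(\lambda)$ with a family $(P_1,\dots,P_{\ell(\lambda)})$ of nonintersecting $\{E,N,NE\}$-paths where $P_i$ runs from $A_i=(0,\lambda_1-\lambda_i+i-1)$ to some endpoint $(x_i,\lambda_1+i-1)$ with $x_i$ unconstrained, and it carries $a^{o(\Pi)}q^{|\Pi|}$ to the product of the path weights. Fixing a common horizontal cutoff $x_i=N$ and letting $N\to\infty$, LGV applied to this family yields
$$\sum_{\Pi\in\mathcal{B}(\lambda)}a^{o(\Pi)}q^{|\Pi|}\;=\;\det\bigl[W(A_i,B_j)\bigr]_{1\le i,j\le\ell(\lambda)},$$
where $W(A,B)$ is the generating function of single paths from $A$ to $B$.

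To identify this determinant with $\det M_\lambda(aq;q)$ I only need to compute $W(A_i,B_j)$. A single path from $A_i$ to $B_j$ climbs exactly $b_j-a_i=\lambda_i+j-i$ vertical levels, and since the $E$-, $N$-, and $NE$-weights depend only on the column (``vertical level'' in the sense used before the proposition), not on the absolute $y$-coordinate of the start, translating the path down to begin at $(0,0)$ does not alter its generating function. By the single-row bijection, such a path is an overpartition with at most $\lambda_i+j-i$ parts, and Lemma~\ref{lemaop} therefore gives
$$W(A_i,B_j)\;=\;\frac{(-aq)_{\lambda_i+j-i}}{(q)_{\lambda_i+j-i}},$$
with the convention that this vanishes when $\lambda_i+j-i<0$ (no such path exists). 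But this is precisely the $(j,i)$-entry of $M_\lambda(aq;q)$ from Definition~\ref{DefM}, so the determinant is invariant under transposition and we obtain $\sum_{\Pi\in\mathcal{B}(\lambda)}a^{o(\Pi)}q^{|\Pi|}=\det M_\lambda(aq;q)$, as claimed.

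The main obstacle is justifying LGV in the presence of the diagonal $NE$ step, for which ``intersection = sharing a lattice point'' is not the right notion: two paths can geometrically cross when one takes a vertical $N$ at column $c$ while the other takes an $NE$ that passes through the same unit square without sharing a vertex. One must declare these configurations to be intersections as well; with that enlarged definition, the usual path-swap involution at the first intersection cancels the contributions of non-identity permutations in the determinantal expansion, exactly as in Brenti's treatment of super Schur functions~\cite{Br}. The ordering $a_{i+1}-a_i=\lambda_i-\lambda_{i+1}+1\ge 1$ and $b_{i+1}-b_i=1$ is strict on both sides, so only the identity permutation of endpoints admits a nonintersecting realization, and no extraneous permutation terms contaminate the determinant.
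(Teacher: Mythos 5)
Your proof is correct and takes essentially the same route as the paper's: the bijection with nonintersecting $\{E,N,NE\}$-paths, Lindstr\"om's lemma applied with a horizontal cutoff sent to infinity, and Lemma~\ref{lemaop} to identify each single-path generating function with the corresponding entry of $M_\lambda(aq;q)$. Your worry about $N$/$NE$ crossings is in fact vacuous --- a diagonal step meets the grid lines only at its two lattice endpoints, so with this step set any two crossing paths already share a vertex and the standard tail-swap involution applies unchanged.
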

\begin{proof} From (\ref{genoverpartitions}) we have that
the limit when $x$ runs to infinity of the number of paths from
$(0,0)$ to $(x,k)$ is $(-aq)_k/(q)_k$. Using Lemma 1 of
\cite{Li} we have that $ \det M_\lambda(aq;q)$ is the limit when $x$
goes to infinity of the generating function for $\ell(\lambda)$
nonintersecting paths going from $(0,\lambda_1+i-1-\lambda_i)$ to
$(x,\lambda_1+i-1)$. Thanks to the bijection between paths and
overpartitions this is also the generating function for
overpartitions whose $i$th row overpartition has at most
$\lambda_{i}$ and at least $\lambda_{i+1}$ parts.
\end{proof}

% Let
%$\bar{p}_\lambda(n,k)$ be the number of plane overpartitions of $n$
%of shape $\lambda$ with $k$ overlined parts and let
%$F_\lambda(a,q)=\sum_{n,k} \bar{p}_\lambda(n,k)q^na^k$.

\begin{proposition}\label{gen}
Let $\lambda$ be a partition. The weighted generating function for
plane overpartitions of shape $\lambda$ is given by
$$
\sum_{\Pi \in \mathcal{S}(\lambda)} a^{o(\Pi)}q^{|\Pi|}=
q^{|\lambda|}\det M_{\lambda}(a;q).
$$
\end{proposition}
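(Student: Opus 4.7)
The plan is to reuse the path-to-overpartition bijection and the LGV setup from the proof of Proposition \ref{bounded}, replacing only the ``at most $k$ parts'' generating function of equation (\ref{genoverpartitions}) by the ``exactly $k$ parts'' generating function $q^{k}(-a)_{k}/(q)_{k}$ of equation (\ref{genoverpartitionsprim}). The key observation is that a plane overpartition of shape exactly $\lambda$ is precisely a plane overpartition bounded by $\lambda$ in which the $i$th row is an overpartition of length exactly $\lambda_i$, and this restriction can be imposed directly at the level of the individual-path generating functions that enter the LGV determinant.

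Concretely, I would keep the same endpoints $A_i=(0,\lambda_1+i-1-\lambda_i)$ and $B_i=(x,\lambda_1+i-1)$ for $i=1,\ldots,\ell(\lambda)$, and encode the $i$th row of a plane overpartition of shape $\lambda$ by a lattice path from $A_i$ to $B_i$ under the bijection of Section \ref{nonint}, now subject to the extra condition that the associated row overpartition has \emph{exactly} $\lambda_i$ parts. The nonintersection condition on the resulting family of paths still enforces the column constraints of a plane overpartition, exactly as in Proposition \ref{bounded}, so this gives a bijection between $\mathcal{S}(\lambda)$ and such families of nonintersecting paths.

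By equation (\ref{genoverpartitionsprim}), the weighted count (as $x\to\infty$) of single paths from $A_j$ to $B_i$, whose rise is $\lambda_j+i-j$, equals $q^{\lambda_j+i-j}(-a)_{\lambda_j+i-j}/(q)_{\lambda_j+i-j}$. The LGV lemma (Lemma 1 of \cite{Li}) then yields
\[
\sum_{\Pi\in\mathcal{S}(\lambda)} a^{o(\Pi)}q^{|\Pi|} = \det\left(\frac{q^{\lambda_j+i-j}(-a)_{\lambda_j+i-j}}{(q)_{\lambda_j+i-j}}\right)_{1\le i,j\le\ell(\lambda)}.
\]
To extract $q^{|\lambda|}$, I would observe that in the Leibniz expansion, the $\sigma$-term carries the $q$-power $\prod_i q^{\lambda_{\sigma(i)}+i-\sigma(i)}=q^{|\lambda|+\sum_i(i-\sigma(i))}=q^{|\lambda|}$, since $\sum_i(i-\sigma(i))=0$ for every permutation $\sigma$. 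Pulling this common factor out of each summand of the determinant produces $q^{|\lambda|}\det M_\lambda(a;q)$, which is the desired identity.

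The main obstacle is arguing cleanly that the ``exactly $\lambda_i$ parts'' restriction propagates through the LGV setup without disturbing the column-interlacing enforced by nonintersection. This is essentially a bookkeeping issue: the start and end heights already fix the total rise of each path at $\lambda_i$, and requiring ``exactly $\lambda_i$'' means that all of these $\lambda_i$ vertical steps must correspond to strictly positive parts. This is precisely the condition that changes the per-path generating function from $(-aq)_k/(q)_k$ to $q^k(-a)_k/(q)_k$, after which the determinant identity follows immediately by the factorization above.
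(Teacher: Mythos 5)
Your proposal is correct and is essentially the paper's own proof: the authors also simply rerun the argument of Proposition \ref{bounded} with the ``exactly $k$ parts'' generating function (\ref{genoverpartitionsprim}) in place of (\ref{genoverpartitions}). Your explicit extraction of the factor $q^{|\lambda|}$ via $\sum_i(i-\sigma(i))=0$ is a detail the paper leaves implicit, and it is handled correctly.
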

\begin{proof}
The proof is the same as the proof of Proposition \ref{bounded}.
Here we just use (\ref{genoverpartitionsprim}) instead of
(\ref{genoverpartitions}).
\end{proof}

The determinant of $M_{\lambda}(a;q)$ is given by the following formula:

\begin{equation}\label{m}
\det M_{\lambda}(a;q)=q^{\sum_i
(i-1)\lambda_i}\prod_{(i,j)\in\lambda}\frac{1+aq^{c_{i,j}}}{1-q^{h_{i,j}}}.
\end{equation}
%Also,
%\begin{equation}\label{DVan}
%D_\lambda=q^{n(n-1)(n+1)/6}\prod_{i=1}^k\frac{(-1)_{i-1}(-q)_{\lambda_i-i+1}}{(q)_{\lambda_i+k-i}}
%\prod_{j=i+1}^k(q^{\lambda_j-j+1}-q^{\lambda_i-i+1}).
%\text{Vand}(q^{\lambda_k-k+1},\dots,q^{\lambda_2-1},q^{\lambda_1}).
%\end{equation}
%{\bf **** As noted by the referee this has been done before. see advanced determinant calculus
%by krattenthaler Theorem 26 (3.11)*** update}
For the proof see for example pages 16--17 of \cite{GV1}  or Theorem 26 (3.11) of \cite{Kr1}.

Using (\ref{m})  we obtain the product formulas for the generating functions from Propositions \ref{bounded} and \ref{gen}. Those are the  hook--content
formulas given in Theorems \ref{T1} and \ref{T2}.

{\bf Remark 1.} We give a bijective proof of formula (\ref{m}). Indeed Krattenthaler  \cite{Kr0} showed that the weighted generating function of
super semistandard Young tableaux of shape $\lambda$ is
$$
q^{\sum_i(i-1)\lambda_i}\prod_{(i,j)\in\lambda}\frac{1+aq^{c_{i,j}}}{1-q^{h_{i,j}}},
$$
where each tableau is weighted by $a^{o(T)}q^{|T|}$. A super semistandard tableau \cite{Kr0} of shape $\lambda$ is a filling of  cells of the Ferrers diagram of $\lambda$ with entries
from the ordered alphabet $1 < 2 < 3 < \ldots < \bar{1} < \bar{2} < \bar{3} < \ldots$ such that
\begin{itemize} 
\item the nonoverlined entries form a column-strict reverse plane partition of some shape $\nu$,
where $\nu$ is a partition contained in $\lambda$,
\item the overlined entries form a row-strict reverse plane partition of shape $\lambda / \nu$.
\end{itemize}

We give here a bijection from super semistandard Young tableaux to plane overpartitions.
We starting with a super semistandard Young tableau where the nonoverlined (resp. overlined)
entries are less than or equal to $k$ (resp. $\ell$).
First, change all the entries equal to $a$ by $k+1-a$ and all the entries equal
to $\bar{a}$ to $\overline{\ell+1-a}$. 

For example,  starting with the super semistandard Young tableau with $k=5$ and $\ell=4$
$$
\begin{array}{l}
1  3  4  5  5  \bar{2} \bar{4}\\
2  4  5  \bar{1} \bar{2} \bar{3} \bar{4}\\
3  5  \bar{2} \bar{3} \bar{4} \\
4  \bar{1} \bar{3}\\
\bar{3}
\end{array}
\quad \textrm{we obtain} \quad  
\begin{array}{l}
5  3  2  1  1  \bar{3} \bar{1}\\
4  2  1  \bar{4} \bar{3} \bar{2} \bar{1}\\
3  1  \bar{3} \bar{2} \bar{1} \\
2  \bar{4} \bar{2}\\
\bar{2}
\end{array}.
$$

Now we use the order $0<\bar{1}<1<\bar{2}<2<\bar{3}<3<\ldots $ and we suppose
that the cells outside of the shape $\lambda$ are filled with 0.
At first all the nonoverlined entries are active. While there is an active
part, we choose the smallest active part. If there are more than one, we choose the rightmost one. We swap it with its east or south neighbor, choosing the larger of the two. If they are equal then we choose the east one. We proceed in this way until this part is greater than or equal to both of its neighbors. When we reach this, 
we declare the part inactive.
%Precisely, let E be the chosen active part and let its east and south neighbors be  $a$ and $b$, respectively.  If $E<a$ %and $a\ge b$, we apply	
%\begin{center}
%\begin{pspicture}(0,0)(2,2)
%\psset{unit=0.5cm}\psset{linewidth=0.4mm}
%\psline(0,0)(1,0)(1,1)(2,1)(2,2)(0,2)(0,0)
%\psline(0,1)(1,1)
%\psline(1,1)(1,2)
%\rput(0.5,0.5){b}
%\rput(0.5,1.5){E}
%\rput(1.5,1.5){a}
%\end{pspicture}{\huge $\rightarrow$}\hspace{.5cm}
%\begin{pspicture}(0,0)(2,2)
%\psset{unit=0.5cm}\psset{linewidth=0.4mm}
%\psline(0,0)(1,0)(1,1)(2,1)(2,2)(0,2)(0,0)
%\psline(0,1)(1,1)
%\psline(1,1)(1,2)
%\rput(0.5,0.5){b}
%\rput(0.5,1.5){a}
%\rput(1.5,1.5){E}
%\end{pspicture}
%\end{center}
%If $E<b$ and $a<b$, we apply	
%\begin{center}
%\begin{pspicture}(0,0)(2,2)
%\psset{unit=0.5cm}\psset{linewidth=0.4mm}
%\psline(0,0)(1,0)(1,1)(2,1)(2,2)(0,2)(0,0)
%\psline(0,1)(1,1)
%\psline(1,1)(1,2)
%\rput(0.5,0.5){b}
%\rput(0.5,1.5){E}
%\rput(1.5,1.5){a}
%\end{pspicture}{\huge $\rightarrow$}\hspace{.5cm}
%\begin{pspicture}(0,0)(2,2)
%\psset{unit=0.5cm}\psset{linewidth=0.4mm}
%\psline(0,0)(1,0)(1,1)(2,1)(2,2)(0,2)(0,0)
%\psline(0,1)(1,1)
%\psline(1,1)(1,2)
%\rput(0.5,0.5){b}
%\rput(0.5,1.5){a}
%\rput(1.5,1.5){E}
%\end{pspicture}
%\end{center}
%We apply those moves until $E\ge a$ and $E\ge b$. When we reach this, 
%we declare the part inactive.

Continuing with the preceding example and applying the algorithm to the smallest (and rightmost) active part until it becomes inactive we obtain:
$$
\begin{array}{l}
{\bf 5  3  2  1}  {\bf 1}  \bar{3} \bar{1}\\
{\bf 4  2  1}  \bar{4} \bar{3} \bar{2} \bar{1}\\
{\bf 3  1}  \bar{3} \bar{2} \bar{1} \\
{\bf 2}  \bar{4} \bar{2}\\
\bar{2}
\end{array} \ \ \ \begin{array}{l}
{\bf 5  3  2  1}   \bar{3} {\bf 1} \bar{1}\\
{\bf 4  2  1}  \bar{4} \bar{3} \bar{2}  \bar{1}\\
{\bf 3  1}  \bar{3} \bar{2} \bar{1} \\
{\bf 2}  \bar{4} \bar{2}\\
\bar{2}
\end{array}\ \ \ \begin{array}{l}
{\bf 5  3  2  1}   \bar{3} \bar{2} \bar{1}\\
{\bf 4  2  1}  \bar{4} \bar{3} {\bf 1}  \bar{1}\\
{\bf 3  1}  \bar{3} \bar{2} \bar{1} \\
{\bf 2}  \bar{4} \bar{2}\\
\bar{2}
\end{array} 
$$

Then we move the rest of the active parts.
$$
\begin{array}{l}
{\bf 5  3  2 } \bar{4}  \bar{3} \bar{2} \bar{1}\\
{\bf 4  2  1}   \bar{3} 1 { 1}  \bar{1}\\
{\bf 3  1}  \bar{3} \bar{2} \bar{1} \\
{\bf 2}  \bar{4} \bar{2}\\
\bar{2}
\end{array} \ \ \ \begin{array}{l}
{\bf 5  3  2}  \bar{4} \bar{3} \bar{2} \bar{1}\\
{\bf 4  2}   \bar{3}  \bar{2} 1 1 \bar{1}\\
{\bf 3  1}  \bar{3} { 1}  \bar{1} \\
{\bf 2}  \bar{4} \bar{2}\\
\bar{2}
\end{array} \ \ \ \begin{array}{l}
{\bf 5  3  2}  \bar{4} \bar{3} \bar{2} \bar{1}\\
{\bf 4  2}   \bar{3}  \bar{2} 1 1 \bar{1}\\
{\bf 3  }  \bar{4} \bar{3} { 1}  \bar{1} \\
{\bf 2}   \bar{2} {\bf 1} \\
\bar{2}
\end{array} \ \ \ \begin{array}{l}
{\bf 5  3  2}   \bar{4} \bar{3}  \bar{2} \bar{1}\\
{\bf 4  2}   \bar{3}  \bar{2} 1 1 \bar{1}\\
{\bf 3 }  \bar{4} \bar{3} { 1}  \bar{1} \\
{\bf 2}   \bar{2} { 1} \\
\bar{2}
\end{array} \ \ \ \begin{array}{l}
{\bf 5  3  } \bar{4} \bar{3} 2 \bar{2} \bar{1}\\
{\bf 4  2  } \bar{3}  \bar{2} 1 1 \bar{1}\\
{\bf 3  } \bar{4} \bar{3}  { 1}  \bar{1} \\
{\bf 2  } \bar{2} {1} \\
\bar{2}
\end{array} 
$$
$$
\begin{array}{l}
{\bf 5  3  } \bar{4} \bar{3} 2 \bar{2} \bar{1}\\
{\bf 4   } \bar{4} \bar{3}  \bar{2} 1 1 \bar{1}\\
{\bf 3  }  \bar{3} 2  { 1}  \bar{1} \\
{\bf 2  } \bar{2} {1} \\
\bar{2}
\end{array} \ \ \  \begin{array}{l}
{\bf 5  3  } \bar{4} \bar{3} 2 \bar{2} \bar{1}\\
{\bf 4   } \bar{4} \bar{3}  \bar{2} 1 1 \bar{1}\\
{\bf 3  }  \bar{3} 2  { 1}  \bar{1} \\
 \bar{2} 2 {1} \\
\bar{2}
\end{array} \ \ \ \begin{array}{l}
{\bf 5   } \bar{4} 3 \bar{3} 2 \bar{2} \bar{1}\\
{\bf 4   } \bar{4} \bar{3}  \bar{2} 1 1 \bar{1}\\
{\bf 3  }  \bar{3} 2  { 1}  \bar{1} \\
 \bar{2} 2 {1} \\
\bar{2}
\end{array} \ \ \ \begin{array}{l}
{\bf 5   } \bar{4} 3 \bar{3} 2 \bar{2} \bar{1}\\
{\bf 4   } \bar{4} \bar{3}  \bar{2} 1 1 \bar{1}\\
{ 3  }  \bar{3} 2  { 1}  \bar{1} \\
 \bar{2} 2 {1} \\
\bar{2}
\end{array} \ \ \ \begin{array}{l}
{\bf 5   } \bar{4} 3 \bar{3} 2 \bar{2} \bar{1}\\
{ 4   } \bar{4} \bar{3}  \bar{2} 1 1 \bar{1}\\
{ 3  }  \bar{3} 2  { 1}  \bar{1} \\
 \bar{2} 2 {1} \\
\bar{2}
\end{array}
$$

%\begin{theorem}\label{boundedshape}
%Let $\lambda$ be a partition. The weighted generating function for
%plane overpartitions such that the $i^{th}$ row of the plane
%overpartition is an overpartition that has at most $\lambda_{i}$
%parts and at least $\lambda_{i+1}$ parts is
%\begin{equation}
%\sum a^{o(\Lambda)}q^{|\Lambda|}=q^{\sum_i
%(i-1)\lambda_i}\prod_{(i,j)\in\lambda}\frac{1+aq^{c_{i,j}+1}}{1-q^{h_{i,j}}}.
%\end{equation}
%where $h_{i,j}=\lambda_i-i+1+\lambda'_j-j$ is the hook length of the
%cell $(i,j)$ and $c_{i,j}=j-i$ is the content of the cell $(i,j)$.
%\end{theorem}
%
%\begin{theorem}\label{exactshape}
%Let $\lambda$ be a partition. The weighted generating function for
%plane overpartitions of shape $\lambda$ is
%\begin{equation}
%\sum a^{o(\Lambda)}q^{|\Lambda|}=q^{\sum_i
%i\lambda_i}\prod_{(i,j)\in\lambda}\frac{1+aq^{c_{i,j}}}{1-q^{h_{i,j}}}.
%\end{equation}
%where $h_{i,j}=\lambda_i-i+1+\lambda'_j-j$ is the hook length of the
%cell $(i,j)$ and $c_{i,j}=j-i$ is the content of the cell $(i,j)$.
%\end{theorem}

{\bf Remark 2.} Stanley's hook--content formula states that the generating function
of semistandard Young tableaux of shape $\lambda$ where the
entries are less than or equal to $n$ is
\begin{equation}\label{Shc}
q^{\sum(i-1) \lambda_i}\prod_{(i,j)\in\lambda}\frac{1-q^{n+c_{i,j}}}{1-q^{h_{i,j}}}.
\end{equation}
See Theorem 7.21.2 of \cite{St2}. Formula (\ref{m}) is equivalent to  Stanley's hook formula by Examples I.2.5 and I.3.3 of \cite{Mac}.  Again we can give a bijective argument. 

Formula (\ref{Shc}) is the weighted generating function of super semistandard
tableaux of shape $\lambda$ with $a=-q^n$. Start with a super semistandard Young tableau $T$ of shape $\lambda$ 
and add $n$ to all the overlined
entries.  Now transform
the tableau into a reverse plane overpartition with the above algorithm  
using the order $1>\bar{1}>2>\bar{2}>3>\ldots $. This shows that super semistandard Young tableaux $T$ of shape $\lambda$ with weight $(-q^n)^{o(T)}q^{|T|}$  are in bijection with
reverse plane overpartitions $\Pi$ of the same shape where the overlined entries are greater than $n$
with  weight $(-1)^{o(\Pi)}q^{|\Pi|}$. 

Now we define a sign reversing involution on these reverse plane overpartitions. Given such a  reverse plane overpartition, if there is at least one part greater than $n$, we choose the uppermost and rightmost part greater
than $n$. If this part is overlined, then we take off the overline, otherwise we overline the part.
Note that in this case the parity of the number of overlined parts is changed and therefore
the weight of the reverse plane overpartition is multiplied by $-1$.
If no such part exists, the given reverse plane partition  is a semistandard Young tableau of shape $\lambda$
where the entries are less than or equal to $n$.\\

Now, we give a generating formula for plane overpartitions with at most
$r$ rows and $c$ columns.

%{\bf **** Check this according to the referee $a$ should be $aq$}

\begin{proposition}\label{suma}
The weighted generating function for plane overpartitions with
at most $r$ rows and $c$ columns is given by
$$
\sum_{c\ge \lambda_1\ge \ldots \ge \lambda_{(r-1)/2}\ge 0} \det
M_{(c, \lambda_1, \lambda_1,\ldots , \lambda_{(r-1)/2},
\lambda_{(r-1)/2})}(aq;q)
$$
if $r$ is odd, and by
$$
\sum_{c\ge \lambda_1\ge \ldots \ge \lambda_{r/2}\ge 0} \det
M_{(\lambda_1, \lambda_1,\ldots , \lambda_{r/2},
\lambda_{r/2})}(aq;q)
$$
if $r$ is even.

In particular, the weighted generating function for all
overpartitions is:
$$
\sum_{\lambda_1\ge \ldots \ge \lambda_{k}} \det M_{(\lambda_1,
\lambda_1,\ldots , \lambda_{k}, \lambda_{k})}(aq;q).
$$
\end{proposition}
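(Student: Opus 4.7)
The plan is to decompose the set of plane overpartitions with at most $r$ rows and $c$ columns as a disjoint union of sets of the form $\mathcal{B}(\lambda)$, and then apply Proposition \ref{bounded} termwise. The key observation is that in the shapes $\lambda$ appearing in the statement, consecutive pairs of parts are made equal on purpose: this forces certain rows of any $\Pi \in \mathcal{B}(\lambda)$ to have a \emph{fixed} length, because row $i$ must then have between $\lambda_{i+1} = \lambda_i$ and $\lambda_i$ parts, i.e.~exactly $\lambda_i$ parts.

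Concretely, suppose $r$ is odd and consider $\alpha = (c,\lambda_1,\lambda_1,\lambda_2,\lambda_2,\ldots,\lambda_{(r-1)/2},\lambda_{(r-1)/2})$. The conditions defining $\mathcal{B}(\alpha)$ force row $2k$ to have exactly $\lambda_k$ parts for $k = 1,\ldots,(r-1)/2$, while row $1$ ranges freely in $[\lambda_1,c]$ and row $2k+1$ ranges freely in $[\lambda_{k+1},\lambda_k]$. Hence any plane overpartition $\Pi$ with shape $\mu = (\mu_1,\ldots,\mu_r)$ and $\mu_1 \leq c$ belongs to $\mathcal{B}(\alpha)$ for the unique choice $\lambda_k = \mu_{2k}$, and the weakly decreasing property of $\mu$ together with $\mu_1 \leq c$ translates precisely to $c \geq \lambda_1 \geq \cdots \geq \lambda_{(r-1)/2} \geq 0$. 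Summing Proposition \ref{bounded} over these $\lambda$ then yields the claimed identity. For $r$ even the argument is analogous with $\alpha = (\lambda_1,\lambda_1,\ldots,\lambda_{r/2},\lambda_{r/2})$: this time it is the odd-indexed rows that have fixed lengths $\mu_{2k-1} = \lambda_k$. The third identity (the ``all overpartitions'' case) follows from the even case by letting $c \to \infty$, which drops the upper bound $\lambda_1 \leq c$.

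The main obstacle is purely bookkeeping: one must verify that the interval constraints on row lengths imposed by $\mathcal{B}(\alpha)$ match exactly the partition inequalities $\mu_1 \geq \mu_2 \geq \cdots \geq \mu_r \geq 0$ together with $\mu_1 \leq c$, so that the decomposition into the various $\mathcal{B}(\alpha)$'s is indeed a partition of the target set. Once the parity-based shape of $\alpha$ is fixed and the correspondence $\lambda_k \leftrightarrow \mu_{2k}$ (resp.~$\mu_{2k-1}$) is written down, this verification is straightforward, and the proposition reduces to an application of Proposition \ref{bounded}.
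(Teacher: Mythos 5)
Your proof is correct and matches the paper's intent exactly: the paper's entire proof is the sentence ``This is a direct consequence of Proposition \ref{bounded},'' and your decomposition of the set of plane overpartitions with at most $r$ rows and $c$ columns into the disjoint union of the sets $\mathcal{B}(\alpha)$ (with the repeated parts of $\alpha$ pinning down the even- or odd-indexed row lengths, depending on parity) is precisely the bookkeeping that sentence leaves implicit. The only nitpick is that the final ``all plane overpartitions'' identity requires letting the number of rows grow as well as $c\to\infty$, not just dropping the bound $\lambda_1\le c$.
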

\begin{proof}
This is a direct consequence of Proposition \ref{bounded}.
\end{proof}

We will use this result to get another ``symmetric
function'' proof of the shifted MacMahon's formula 
(\cite{FW,V1,V2}):
$$
\sum_{\substack{\Pi \text { is a
plane}\\\text{overpartition}}}q^{|\Pi|}=\prod_{n=1}^\infty
\left(\frac{1+q^n}{1-q^n}\right)^n.
$$

\subsection{The weighted shifted MacMahon formula}

%%%%%%%%%%%%%%%%%%%%%%%%%%%%%%%%
%\subsection{Symmetric functions}
%%%%%%%%%%%%%%%%%%%%%%%%%%%%%%%%

In this section we give the weighted generalization of the shifted MacMahon formula.
We  use a symmetric function identity to compute the last sum in Proposition \ref{suma} since  $\det M_\lambda(a;q)$, as we will soon see,  has an interpretation in terms of symmetric functions. 

A symmetric function of  an alphabet (set of indeterminates, also called letters) $\mathbb A$ is a function of letters which is invariant under any permutation of $\mathbb A$.  Recall three standard bases for
the algebra of symmetric functions: Schur functions $s$, complete
symmetric functions $h$ and elementary symmetric functions $e$ (see \cite{Mac}). The latter two are conveniently given by their generating functions:
$$
H_t(\mathbb A )=\prod_{a\in A}\frac{1}{1-ta}\quad \text{and} \quad E_t(\mathbb A )=\prod_{a\in A}(1+ta).
$$

Algebraic operations  on alphabets, such as addition, subtraction and multiplication, can be defined  using $\lambda$-rings framework, see Chapters I and  II of  \cite {La}. In this framework symmetric functions are seen as ring operators. The addition of two alphabets $\mathbb A$ and $\mathbb B$ is defined naturally as their disjoint union and is denoted by $\mathbb A + \mathbb B$.  Obviously,
$$
H_t(\mathbb A+\mathbb B)=H_t(\mathbb A)H_t(\mathbb B).
$$
Subtraction and multiplication  are defined by
$$
H_t(\mathbb A -\mathbb B)=\frac{H_t(\mathbb A)}{H_t(\mathbb B)}, \quad \quad H_t(c\mathbb A)=(H_t(\mathbb A))^{c}, \quad c\in \mathbb C.
$$
Analogous relations hold for elementary symmetric functions since $E_t(\mathbb A)=H_{-t}(-\mathbb A)$.

A specialization is an algebra homomorphism between the algebra of
symmetric functions and $\mathbb{C}$.   If
$\rho$ is a specialization  and $f$ is a symmetric function we denote its image by $f|_{\rho}$.

%Symmetric functions can be defined as operators in $\lambda$--rings, see Chapters I and II of \cite {La}. As operators, they satisfy certain compatibility conditions with algebraic operations on alphabets like addition, subtraction and  multiplication. For example  a generating function of
%the complete symmetric functions in an alphabet $\mathbb A$, a set of indeterminates,  is given by
%$$
%H_t(\mathbb A )=\prod_{a\in A}(1+ta).
%$$
% 
%If $\mathbb B$ is another alphabet then addition $\mathbb A+\mathbb B$ is their disjoint union. Obviously,
%$$
%H_t(\mathbb A+\mathbb B)=H_t(\mathbb A)H_t(\mathbb B).
%$$
%Subtraction and multiplication  are defined by
%$$
%H_t(\mathbb A -\mathbb B)=\frac{H_t(\mathbb A)}{H_t(\mathbb B)}, \quad \quad H_t(c\mathbb A)=(H_t(\mathbb A))^{c}, \quad c\in \mathbb C.
%$$

Let $\rho(a)$ be a specialization given by
$$
h_n|_{\rho(a)}=\frac{(-a)_{n}}{(q)_{n}}.
$$
Since, for
$\lambda={(\lambda_1,\lambda_2,\dots,\lambda_k)}$
$$
s_{\lambda}=\det(h_{\lambda_i-i+j})
$$
we have from Definition \ref{DefM} that
\begin{equation}\label{sym}
s_{\lambda}|_{\rho(a)}=\det M_\lambda(a;q).
\end{equation}

In the $\lambda$--ring framework, the $q$-binomial theorem (see (2.21) of \cite{A})
$$
\sum_{n=0}^\infty \frac{(-a)_n}{(q)_n}t^n=\frac{(-at)_\infty}{(t)_\infty}
$$
shows that the specialization $\rho(a)$ is equivalent to considering symmetric functions in the difference of two alphabets $1+q+q^2+\cdots$ and $-a-aq-aq^2-\cdots$. Thus,
\begin{equation*}
s_{\lambda}|_{\rho(a)}=s_\lambda((1+q+q^2+\cdots)-(-a-aq-aq^2-aq^3-\cdots)).
\end{equation*}
%Observe that $\rho(-q^{n+1})$ is a specialization given by the
%evaluation $x_i=q^i$ for $i=0,1,\dots,n$ and 0 otherwise. Then Stanley's formula  says
%$$
%s_\lambda|_{\rho(-q^{n+1})}=q^{b(\lambda)}\prod_{x\in
%\lambda}\frac{1-q^{n}q^{c(x)+1}}{1-q^{h(x)}},\;\;\;\;\;\text{for
%every }n.
%$$
%This implies that
%$$
%s_\lambda|_{\rho(a)}=q^{b(\lambda)}\prod_{x\in
%\lambda}\frac{1+aq^{c(x)+1}}{1-q^{h(x)}},
%$$
%because we have polynomials in $a$ on both sides and equality is satisfied for infinitely many values ($a=-q^{n+1}$, for every $n$). 

The weighted shifted MacMahon formula
can be obtained from (\ref{sym}) and Proposition
\ref{suma}. We have 
$$
\sum_{\substack{\Pi \text { is a
plane}\\\text{overpartition}}}a^{o(\pi)}q^{|\Pi|}=\sum_{\lambda_1\geq
\lambda_2\geq\cdots\geq
\lambda_k}s_{(\lambda_1,\lambda_1,\lambda_2,\lambda_2,\dots,\lambda_k,\lambda_k)}|_{\rho(aq)}=\sum_{\lambda'
\text{ even}}s_\lambda|_{\rho(aq)},
$$
where $\lambda'$ is the transpose of $\lambda$ and a partition is
even if it has even parts.

%{\bf *** Problem: should be $\rho(q)$}

By Ex. 10 b) on p. 79 of \cite{Mac} we have
$$
\sum_{\lambda' \text{
even}}s_\lambda(\mathbb A)t^{|\lambda|/2}=H_t(e_2(\mathbb A)).
$$
If instead of $\mathbb A$ we insert a difference of alphabets $\mathbb A=x_1+x_2+\dots$ and $\mathbb B=y_1+y_2+\dots$ then we obtain the following product formula:
\begin{eqnarray*}
\sum_{\lambda' \text{
even}}s_\lambda(\mathbb A - \mathbb B)t^{|\lambda|/2}&=&H_t(e_2(\mathbb A-\mathbb B))\\
&=&H_t\Big(\sum_{1  \leq i  < j} x_ix_j+\sum_{1\leq i\leq j} y_iy_j-\sum_{1\leq i,j}x_iy_j\Big)\\
&=&\prod_{1\leq i <j}\frac{1}{1-x_ix_jt}\prod_{1\leq i\leq j}\frac{1}{1-y_iy_jt}\prod_{1 \leq i ,j} (1-x_iy_jt ).
\end{eqnarray*}
This gives us the weighted shifted MacMahon formula:
\begin{proposition} The weighted generating formula for plane overpartitions is
$$
\sum_{\substack{\Pi \text { is a
plane}\\\text{overpartition}}}a^{o(\Pi)}q^{|\Pi|}=
\prod_{i=1}^\infty \frac{(1+aq^i)^{i}}{(1-q^i)^{\lceil
i/2\rceil}(1-a^2q^i)^{\lfloor i/2\rfloor}}.
$$
\label{propweighted}
\end{proposition}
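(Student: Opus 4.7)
The plan is to combine three ingredients the excerpt has already set up: the ``all overpartitions'' case of Proposition \ref{suma}, the identification $\det M_\lambda(a;q)=s_\lambda|_{\rho(a)}$ from \eqref{sym}, and the Macdonald identity $\sum_{\mu' \text{ even}} s_\mu(\mathbb{X})\,t^{|\mu|/2}=H_t(e_2(\mathbb{X}))$ (Ex. 10(b), p. 79 of \cite{Mac}) in the $\lambda$-ring framework.

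First I would invoke Proposition \ref{suma} to write the full generating function as $\sum_\mu \det M_\mu(aq;q)$, the sum being over partitions of the form $(\lambda_1,\lambda_1,\ldots,\lambda_k,\lambda_k)$, which is precisely the condition $\mu'$ even. By \eqref{sym} each determinant equals $s_\mu|_{\rho(aq)}$, so the sum becomes $\sum_{\mu' \text{ even}} s_\mu|_{\rho(aq)}$. Next, the $q$-binomial theorem identifies $\rho(aq)$ with evaluation at the difference of alphabets $\mathbb{A}-\mathbb{B}$ where $\mathbb{A}=1+q+q^2+\cdots$ and $\mathbb{B}=-aq-aq^2-aq^3-\cdots$. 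Applying the Macdonald identity at $t=1$ then produces the product
\[
H_1(e_2(\mathbb{A}-\mathbb{B}))=\prod_{1\le i<j}\frac{1}{1-x_ix_j}\,\prod_{1\le i\le j}\frac{1}{1-y_iy_j}\,\prod_{i,j\ge 1}(1-x_iy_j),
\]
with $x_i=q^{i-1}$ and $y_j=-aq^j$, which is exactly the intermediate formula displayed in the excerpt.

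All that remains is to reindex each factor by the exponent of $q$. In the first product $x_ix_j=q^{i+j-2}$, and the number of pairs with $1\le i<j$ and $i+j-2=n$ is $\lceil n/2\rceil$, yielding $\prod_{n\ge 1}(1-q^n)^{-\lceil n/2\rceil}$. In the second product $y_iy_j=a^2 q^{i+j}$, and the number of pairs with $1\le i\le j$ and $i+j=n$ is $\lfloor n/2\rfloor$, yielding $\prod_{n\ge 1}(1-a^2q^n)^{-\lfloor n/2\rfloor}$. In the third product $1-x_iy_j=1+aq^{i+j-1}$, and the number of pairs with $i+j-1=n$ is $n$, yielding $\prod_{n\ge 1}(1+aq^n)^n$. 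Multiplying the three gives exactly the formula of Proposition \ref{propweighted}.

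The main obstacle worth flagging is the sign bookkeeping when converting $\mathbb{B}=-aq-aq^2-\cdots$ into its letters $y_j=-aq^j$, so that the minus sign in $1-x_iy_j$ correctly combines with the minus in $y_j$ to produce $1+aq^{i+j-1}$; once that is in hand, every remaining step is either a direct appeal to a result already stated in the excerpt or an elementary pair count, so no further difficulty arises.
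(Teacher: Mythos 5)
Your proposal is correct and follows essentially the same route as the paper: Proposition \ref{suma} plus the identity \eqref{sym} reduce the problem to $\sum_{\lambda'\text{ even}}s_\lambda|_{\rho(aq)}$, which is evaluated via Macdonald's Ex.~10(b) applied to the difference of alphabets, followed by the substitution $x_i=q^{i-1}$, $y_j=-aq^j$. The only addition is that you carry out explicitly the final pair-counting that converts the triple product into the stated exponents $\lceil i/2\rceil$ and $\lfloor i/2\rfloor$, a step the paper leaves to the reader; your counts are correct.
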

\begin{proof}
Substituting $x_i=q^{i-1}$ and $y_i=-aq^i$ in the preceding product formula, we get
$$
\prod_{0\leq i <j}\frac{1}{1-q^{i+j}}\prod_{1 \leq i \leq j}\frac{1}{1-a^2q^{i+j}}\prod_{1 \leq i,j}(1+aq^{i+j-1}).
$$
\end{proof}

{\bf Remark.} This proof was suggested to the authors by one of the anonymous referees.

%{\bf **** See if Proposition 3 can be used to prove the r x c case - NOT DONE}

%%%%%%%%%%%%%%%%%%%%%%%%%%%%%%%%%
\subsection{Reverse plane overpartitions}
%%%%%%%%%%%%%%%%%%%%%%%%%%%%%%%%%

%%%%%%%%%%%%%%%%%%%%%%%%%%%%%%%%%
%\subsection{Nonintersecting paths}
%%%%%%%%%%%%%%%%%%%%%%%%%%%%%%%%%

In this section, we construct a bijection between the set of all reverse plane overpartitions and sets of nonintersecting paths whose endpoints are not fixed. We use this bijection and Stembridge's results \cite{St} to obtain a Pfaffian formula for the generating function for reverse plane overpartitions of a given shape. Evaluating the Pfaffian we obtain the hook formula for reverse plane overpartitions due to Okada \cite{O}. Let $\mathcal{S}^R(\lambda)$ be the set of all reverse plane partitions of shape $\lambda$.

\begin{theorem} \label{Okada} 
The generating function for reverse plane overpartitions of shape $\lambda$ is
$$
\sum_{\Pi \in \mathcal{S}^R(\lambda)}q^{|\Pi|}=\prod_{(i,j)\in
\lambda}\frac{1+q^{h_{i,j}}}{1-q^{h_{i,j}}}.
$$
\end{theorem}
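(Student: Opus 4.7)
The plan is to adapt the nonintersecting lattice path machinery of Section~\ref{nonint} to the reverse setting. Unlike the plane overpartition case, where each row of shape $\lambda_i$ has exactly $\lambda_i$ parts, a row of a reverse plane overpartition has a variable number of positive parts hidden among leading zeros. The corresponding lattice path will therefore have one fixed endpoint (recording $\lambda_i$ and $i$) and one free endpoint (recording the number of leading zeros), which places the problem in the realm of Stembridge's Pfaffian variant of the Lindstr\"om--Gessel--Viennot lemma rather than the determinantal version used earlier.

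First I would construct the path encoding. Reading row $i$ of a reverse plane overpartition from right to left turns its positive part into a standard overpartition, in which the last (leftmost after reversal) occurrence of each integer may be overlined. Using the same step set as in Section~\ref{3.1}---east, north, and northeast edges, with a northeast edge marking an overlined last occurrence---row $i$ becomes a path ending at a fixed point $B_i$ chosen with a uniform vertical shift so that nonintersection of the $\ell(\lambda)$ resulting paths exactly translates the column conditions of the reverse plane overpartition (including the overlining rule on first positive occurrences in columns). The other endpoint then slides freely along a common vertical ray $V$, encoding the unknown number of leading zeros. This should yield a weight-preserving bijection between $\mathcal{S}^R(\lambda)$ and the set of $\ell(\lambda)$-tuples of nonintersecting such paths.

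Next I would invoke Stembridge's theorem on nonintersecting paths with free endpoints. When $\ell(\lambda)$ is even the generating function equals a Pfaffian $\mathrm{Pf}(Q)$, where $Q_{ij}$ is the generating function for pairs of nonintersecting paths from two points of $V$ (in their natural order on $V$) to the fixed pair $(B_i,B_j)$; when $\ell(\lambda)$ is odd one appends a dummy source going to a virtual fixed endpoint and applies the same formula to the enlarged matrix.

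The main obstacle is the explicit evaluation of this Pfaffian. Each entry $Q_{ij}$ reduces, after summing over the free starting points (essentially a geometric series in $q$), to a $2\times 2$ determinant in the style of Proposition~\ref{bounded} corrected by a factor coming from the free boundary; the resulting expression is a rational function in $q$ with $q$-Pochhammer denominators and a numerator that factors through terms of the form $1\pm q^{h}$. To finish, I would apply a minor-summation Pfaffian identity of Ishikawa--Wakayama or Stembridge type to collapse $\mathrm{Pf}(Q)$ into the claimed product $\prod_{(i,j)\in\lambda}(1+q^{h_{i,j}})/(1-q^{h_{i,j}})$. A useful cross-check along the way is that dropping the overline contribution (deleting all NE edges from the step set) must recover Gansner's formula~(\ref{gans}), so each $1+q^{h_{i,j}}$ factor in the final numerator should trace back to the interaction between NE edges and the free endpoints.
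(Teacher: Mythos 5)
Your setup coincides with the paper's: encode each row as a lattice path with East, North and North--East steps, fix the arrival points at heights $\lambda_i+\ell-i$, let the departure points range over an $\ell$-element subset of the vertical axis (this is what absorbs the leading zeros), and apply Stembridge's Pfaffian formula for nonintersecting paths with free starting points. Up to that point the proposal is sound and matches the paper.

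The gap is in the last step. ``Apply a minor-summation Pfaffian identity of Ishikawa--Wakayama or Stembridge type to collapse $\mathrm{Pf}(Q)$ into the claimed product'' is not an argument: minor-summation identities are what produce the Pfaffian from the free-endpoint sum (i.e., they are the content of Stembridge's theorem you have already invoked), not a device for evaluating that Pfaffian as a hook product. The actual work consists of two concrete computations that your sketch omits. First, one needs closed forms for the one- and two-path weights: with $W(s)$ and $W(r,s)$ denoting the free-source generating functions ending at heights $s$ and $\{r,s\}$, one shows $W(s)=(-q)_s/(q)_s$ and
$$
W(r,s)=\frac{(-q)_{r}}{(q)_{r}}\cdot\frac{(-q)_s}{(q)_s}\cdot\frac{1-q^{r-s}}{1+q^{r-s}},
$$
which requires summing the Lindstr\"om $2\times 2$ determinants over all pairs of sources and an induction on $s$ using the overpartition generating functions of Lemma \ref{lemaop}; the inductive step hinges on a nontrivial identity for $P(s)M(r-1)-P(r)M(s-1)$. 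Second, one needs the observation that these closed forms satisfy $W(s)=\frac{1+q^s}{1-q^s}W(s-1)$ and $W(r,s)=\frac{1+q^r}{1-q^r}\cdot\frac{1+q^s}{1-q^s}W(r-1,s-1)$; since every index appears exactly once in each perfect matching in the expansion of the Pfaffian, decrementing all the $r_i=\lambda_i+\ell-i$ simultaneously multiplies $\mathrm{Pf}(D)$ by $\prod_i\frac{1+q^{r_i}}{1-q^{r_i}}$, which is exactly the product of $\frac{1+q^{h_{j,1}}}{1-q^{h_{j,1}}}$ over the first column of $\lambda$. Stripping columns one at a time (with the boundary case $r_\ell=0$ handled by $W(r_1,\dots,r_\ell,0)=W(r_1-1,\dots,r_\ell-1)$) then yields the hook product by induction. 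Your cross-check against Gansner's formula \eqref{gans} is a reasonable sanity test, but without the explicit evaluation of $W(r,s)$ and the column-stripping recursion the proof is not complete.
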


We construct a weight preserving bijection between reverse plane overpartitions and sets of nonintersecting paths on a triangular lattice in a similar fashion as in Section \ref{3.1}. The lattice consists of East, North and North--East edges. East edges have weight $1$, North edges on (vertical) level $i$ have weight $q^{i+1}$ and North--East edges joining vertical levels $i$ and $i+1$ have weight $q^{i+1}$. The weight of a set of nonintersecting paths $p$ is the product of the weights of their edges and is denoted by $w(p)$. Let $\Pi$ be a reverse plane overpartition whose positive entries form a skew shape $\lambda / \mu$ and let
$\ell=\ell(\lambda)$. 
Then $\Pi$
 can be represented by a set of $n$ nonintersecting lattice paths such that
\begin{itemize}
\item the departure points are $(0,\mu_i+\ell-i)$ and
\item the arrivals points are $(x,\lambda_i+\ell-i)$,
\end{itemize}
for a large enough $x$ and $i=1,\dots,\ell$.
For example let $x=8$, $\lambda=(5,4,2,2)$ and $\mu=(2,1)$.
Figure
\ref{Fig44} shows  the corresponding set of
nonintersecting paths for  the reverse plane overpartition
of shape $\lambda/\mu$
$$
\begin{array}{llllll}
&&{3}&4&4&\\
&3&{4}&\bar{4}&\\
1&\bar{3}&&&&\\
3&\bar{3}&&&
\end{array}.
$$

\begin{figure} [htp!]
\centering
\includegraphics[height=4cm]{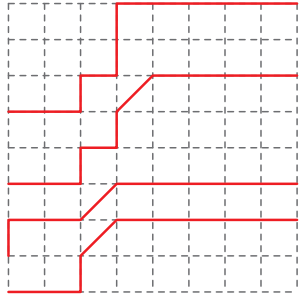}
\caption{Nonintersecting paths and reverse plane overpartitions}
\label{Fig44}
\end{figure}

This implies that all reverse plane overpartitions of shape $\lambda=(\lambda_1,\ldots ,\lambda_\ell)$ 
can be represented  by nonintersecting lattice
paths such that
\begin{itemize}
\item the departure points are an $\ell$--element subset of $\{(0,i)\ | i\geq 0\}$ and
\item the arrivals points are $(x,\lambda_i+\ell-i),$
\end{itemize}
with $x\rightarrow \infty$.\\

Now, for $r_1>r_2>\dots>r_\ell \geq0$ we define
$$
W(r_1,r_2,\dots,r_\ell)=\lim_{x \to \infty}\sum_{p \in P(x;r_1,\dots,r_\ell)}w(p),
$$
where $P(x;r_1,r_2,\dots,r_\ell)$ is the set of all nonintersecting paths joining an $\ell$-element subset of $\{(0,i) |  i\geq 0\}$ with $\{(x,r_1),\dots,(x,r_\ell)\}$. Note that for $r_1>r_2>\dots>r_\ell>0$ we have
\begin{equation}\label{rekW}
W(r_1,\dots,r_{\ell},0)=W(r_1-1,\dots,r_\ell-1).
\end{equation}

By Stembridge's Pfaffian formula for the sum of the weights of nonintersecting paths where departure points are not fixed (Theorem 3.1 of \cite{St}) we obtain
$$
W(r_1,r_2,\dots,r_\ell)=\text{Pf}(D),
$$
where if $\ell$ is even $D$ is the $\ell \times \ell$ skew--symmetric matrix defined by $D_{i,j}=W(r_i,r_j)$ for $1\leq i<j\leq \ell$ and if $\ell$ is odd $D$ is the $(\ell+1) \times (\ell+1)$ skew--symmetric matrix defined by $D_{i,j}=W(r_i,r_j)$  for $1\leq i <j\leq \ell$ and $D_{i,\ell+1}=W(r_i)$ for $1\leq i\leq \ell$.

\begin{lemma} Let $r>s\geq0$. Then
\begin{eqnarray}
W(s)&=&\frac{(-q)_s}{(q)_s},\label{forzaf1}\\
W(r,s)
&=&\frac{(-q)_{r}}{(q)_{r}}\cdot\frac{(-q)_s}{(q)_s}\cdot\frac{1-q^{r-s}}{1+q^{r-s}}.\label{forzaf2}
\end{eqnarray}
\end{lemma}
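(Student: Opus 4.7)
My plan for (\ref{forzaf1}) is to use the path--to--row bijection described just above the lemma. A single path from $(0,i_0)$ to $(x,s)$ with $0 \leq i_0 \leq s$ corresponds to a one-row reverse plane overpartition of shape $(s)$ whose first $i_0$ entries are zero, and reversing its positive entries produces an overpartition with exactly $s-i_0$ parts carrying the same weight $q^{|\Pi|}$. Lemma~\ref{lemaop} (with $a=1$) supplies the generating function $q^{k}(-1)_{k}/(q)_{k}$ for overpartitions with exactly $k$ parts; summing over $k = 0, 1, \ldots, s$ (that is, over $k = s - i_0$) aggregates to the generating function for overpartitions with at most $s$ parts, which by the other half of Lemma~\ref{lemaop} equals $(-q)_s/(q)_s$.

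For (\ref{forzaf2}) I would apply the Lindstr\"om--Gessel--Viennot lemma (Lemma 1 of \cite{Li}) to each pair of starting heights $i_0 > i_1 \geq 0$ with endpoints $(x,r), (x,s)$: the weighted count of non-intersecting pairs equals the $2\times 2$ determinant of single-path generating functions. Its entries are $P(i,k) = q^{k-i}(-1)_{k-i}/(q)_{k-i}$ for $0 \leq i \leq k$ (and $0$ otherwise) by the single-path argument. Summing these determinants over all admissible $(i_0,i_1)$ and reindexing with $\alpha_j := q^j(-1)_j/(q)_j$, $A_n := (-q)_n/(q)_n$, $\delta := r-s$, one obtains
\begin{equation*}
W(r,s) = \sum_{k=0}^{s} \alpha_k A_{k+\delta-1} - \sum_{k=1}^{s} \alpha_{k+\delta} A_{k-1}.
\end{equation*}
Applying the telescoping $\alpha_k = A_k - A_{k-1}$ (with $A_{-1} := 0$) makes the cross-terms cancel, leaving
\begin{equation*}
W(r,s) = \sum_{k=0}^{s} A_k A_{k+\delta-1} - \sum_{k=0}^{s-1} A_k A_{k+\delta+1}.
\end{equation*}

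To conclude I would use induction on $s$ with $\delta$ fixed. The base $s=0$ is $W(r,0) = A_{r-1}$, which follows from (\ref{rekW}) combined with (\ref{forzaf1}), and it matches $A_r A_0 (1-q^\delta)/(1+q^\delta)$ via $A_r/A_{r-1} = (1+q^r)/(1-q^r)$. The inductive step then reduces, via the displayed formula for the difference $W(r,s)-W(r-1,s-1)$, to the algebraic identity
\begin{equation*}
A_s A_{s+\delta-1} - A_{s-1} A_{s+\delta} = \frac{1-q^\delta}{1+q^\delta}\bigl(A_s A_{s+\delta} - A_{s-1} A_{s+\delta-1}\bigr),
\end{equation*}
which follows by factoring $A_{s-1} A_{s+\delta-1}$ out of both sides; the resulting numerators evaluate to $2q^s(1-q^\delta)$ and $2q^s(1+q^\delta)$, giving the stated ratio. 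The main obstacle is precisely this last identity: although elementary, it is the step at which the correction factor $(1-q^\delta)/(1+q^\delta)$ genuinely emerges from the telescoping, and correctly matching the two numerators $2q^s(1 \pm q^\delta)$ is the crux of the argument.
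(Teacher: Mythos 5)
Your argument is correct and is essentially the paper's own proof: both apply Lindstr\"om's determinantal formula to the pairs of starting heights, collapse the double sum by partial summation, extract the first-order recursion whose increment is $W(r,s)-W(r-1,s-1)=A_sA_{s+\delta-1}-A_{s-1}A_{s+\delta}$ (the paper writes this same quantity as $P(s)M(r-1)-P(r)M(s-1)$ with $P(n)=q^n(-1)_n/(q)_n$ and $M(n)=(-q)_n/(q)_n$), and close by induction on $s$ using the two-term identity whose numerators are $2q^s(1\mp q^{\delta})$. The only cosmetic difference is that you fully telescope $\alpha_k=A_k-A_{k-1}$ before differencing, while the paper differences the $P\cdot M$ sum directly; the final verification is the same computation.
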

\begin{proof}
From Lemma \ref{lemaop} we have that the generating function for overpartitions with at most $n$ parts is $M(n)=(-q)_n/(q)_n$ and 
 the generating function for overpartitions with exactly $n$ parts is $P(n)=q^n(-1)_n/(q)_n$.  This implies that
\begin{equation}\label{pz}
\sum_{i=0}^{n}P(i)=M(n).
\end{equation}
Moreover,
$$
W(s)=M(s)=\frac{(-q)_s}{(q)_s}\;\;\;\;\;\text{and}\;\;\;\;\;W(r,0)=M(r-1)=\frac{(-q)_{r-1}}{(q)_{r-1}}.
$$

We prove (\ref{forzaf2}) by induction on $s$. The formula for the base case $s=0$ holds by the above. So, we assume $s\geq1$.

By Lindstr\"om's determinantal formula (Lemma 1 of \cite{Li}) we have that
$$
W(r,s)=\sum_{i=0}^{s}\sum_{j=i+1}^{r}\left(P(r-j)P(s-i)-P(r-i)P(s-j)\right).
$$

%From this lemma it follows:
%\begin{equation}\label{pz}
%\sum_{i=0}^{n}P(i)=M(n).
%\end{equation}
%
%Then
%$$
%F(r,s)=\sum_{i=0}^{s-1}\sum_{j=i}^r f((i-1,s-1),(j,r)).
%$$
Summing over $j$ and using (\ref{pz}) we obtain
$$
W(r,s)=\sum_{i=0}^s \left(P(s-i)M(r-i-1)-P(r-i)M(s-1-i)\right).
$$
Then
$$
W(r,s)=W(r-1,s-1)+P(s)M(r-1)-P(r)M(s-1).
$$
It is enough to prove that
\begin{equation}\label{kolic}
\frac{P(s)M(r-1)-P(r)M(s-1)}{W(r-1,s-1)}=\frac{2(q^{r}+q^s)}{(1-q^{r})(1-q^s)}
\end{equation}
and (\ref{forzaf2}) follows by induction.
Now,
\begin{eqnarray*}
&&P(s)M(r-1)-P(r)M(s-1)=\\
&&=q^s\frac{(-1)_s}{(q)_s}\cdot\frac{(-q)_{r-1}}{(q)_{r-1}}-q^{r}\frac{(-1)_{r}}{(q)_{r}}\cdot\frac{(-q)_{s-1}}{(q)_{s-1}}\\
&&=\frac{(-q)_{r-1}}{(q)_{r-1}}\cdot\frac{(-q)_{s-1}}{(q)_{s-1}}\cdot\frac{1+q^{r-s}}{1-q^{r-s}}\cdot\frac{2(q^{r}+q^s)}{(1-q^{r})(1-q^s)}.
\end{eqnarray*}
Using the inductive hypothesis for $W(r-1,s-1)$ we obtain (\ref{kolic}).

\end{proof}

Let $F_\lambda$ be the generating function for reverse plane overpartitions of shape $\lambda$. Then,
using the bijection we have constructed, we obtain
$$
F_\lambda=W(\lambda_1+\ell-1,\lambda_2+\ell-2,\dots,\lambda_\ell),
$$
which, after applying Stembridge's result, gives us a Pfaffian formula. This Pfaffian formula can be expressed as a product after the following observations.

Let M be $2n \times 2n$ skew--symmetric matrix. One of definitions of the Pfaffian is the following:
$$
\text{Pf}(M)= \sum_{\pi=(i_1,j_1)\ldots (i_n,j_n)} \text{sgn} (\pi)
M_{i_1,j_1}M_{i_2,j_2}\cdots M_{i_n,j_n},
$$
where the sum is over all of perfect matchings (or fixed point free involutions) of $[2n]$.
%the form
%$$
%\Pi=\left[
%\begin{array}{ccccc}1&2& \dots &2k-1 &2k\\i_1&j_1&\dots &i_k&j_k\end{array}\right],
%$$
%such that $i_1<i_2<\dots<i_k$.
Also, for $r>s>0$,
\begin{eqnarray*}
W(s)&=&\frac{1+q^s}{1-q^s} W(s-1),\\
{\rm and}\ \ W(r,s)&=&\frac{1+q^r}{1-q^r} \cdot \frac{1+q^s}{1-q^s}W(r-1,s-1).
\end{eqnarray*}
Then
$$
F_\lambda=\prod_{j=1}^\ell\frac{1+q^{h_{j,1}}}{1-q^{h_{j,1}}}\cdot F_{\bar{\lambda}},
$$
where $\bar{\lambda}=(\lambda_1-1,\dots,\lambda_\ell-1)$ if $\lambda_\ell>1$ and $\bar{\lambda}=(\lambda_1-1,\dots,\lambda_{\ell-1}-1)$ if $\lambda_\ell=1$ (see (\ref{rekW}) in this case). Inductively we obtain Theorem \ref{Okada}.

%%%%%%%%%%%%%%%%%%%%%%%%%%
\section{Domino tilings}
%%%%%%%%%%%%%%%%%%%%%%%%%%

In \cite{V1} a measure on (diagonally) strict plane partitions was studied.
Strict plane partitions are plane partitions were all diagonals are
strict partitions, i.e. strictly decreasing sequences. They can also
be seen as plane overpartitions where all overlines are deleted.
There are $2^{k(\Pi)}$ different plane overpartitions corresponding
to the same strict plane partition $\Pi$, where $k(\Pi)$ is the number of 
connected components of $\Pi$. 

Alternatively, a strict plane partition can be seen as a subset of
$\mathbb{N}\times\mathbb{Z}$ consisting of points $(t,x)$ where $x$
is a part of the diagonal partition indexed by $t$. See Figure
\ref{Fig5}. We call this set the 2-dimensional diagram of that
strict plane partition. The connected components are connected sets
(no holes) on the same horizontal line. The 2-dimensional diagram of
a plane overpartition is the 2-dimensional diagram of its
corresponding strict plane partition.
\begin{figure} [htp!]
\centering
\includegraphics[height=5.5cm]{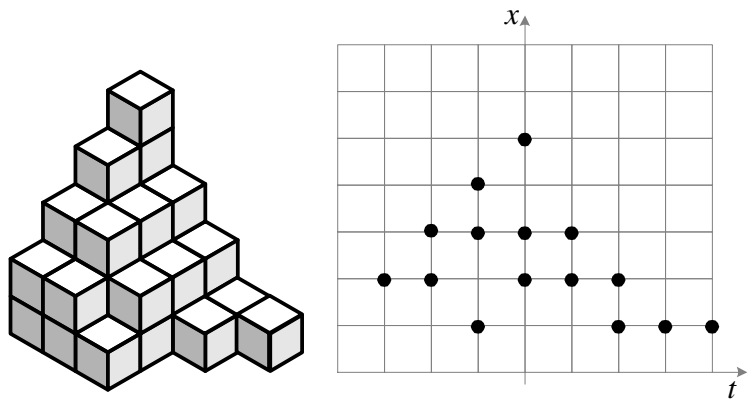}
\caption{A strict plane partition and its corresponding 2-dimensional diagram } \label{Fig5}
\end{figure}

The measure studied in \cite{V1} assigns to each strict plane
partition a weight equal to $2^{k(\Pi)}q^{|\Pi|}$. The limit shape
of this measure is given in terms of the Ronkin function of the
polynomial $P(z,w)=-1+z+w+zw$ and it is parameterized on the domain
representing half of the amoeba of this polynomial. This polynomial
is also related to plane tilings with dominoes. This, as well as
some other features like similarities in correlation kernels
\cite{J,V1} suggested that a connection between this measure and
domino tilings is likely to exist.

Alternatively, one can see this measure as a uniform measure on
plane overpartitions, i.e. each plane overpartition $\Pi$ has a
probability proportional to $q^{|\Pi|}$. In Section \ref{nonint} we
have constructed a bijection between plane overpartitions and
sets of nonintersecting paths. See Figure \ref{Fig6}. This figure is obtained  from Figure \ref{Fig4} by a rotation. Note that the paths are incident with all points in the corresponding 2-dimensional diagram.   The paths consist of edges of three different kinds: horizontal (joining
$(t,x)$ and $(t+1,x)$), vertical (joining $(t,x+1)$ and $(t,x)$) and
diagonal (joining $(t,x+1)$ and $(t+1,x)$). There is a standard way
to construct a tiling with dominoes using these paths(see for
example \cite{J}). We explain the process below. An example is given on Figure \ref{Fig6}.
\begin{figure} [htp!]
\centering
\includegraphics[height=6.5cm]{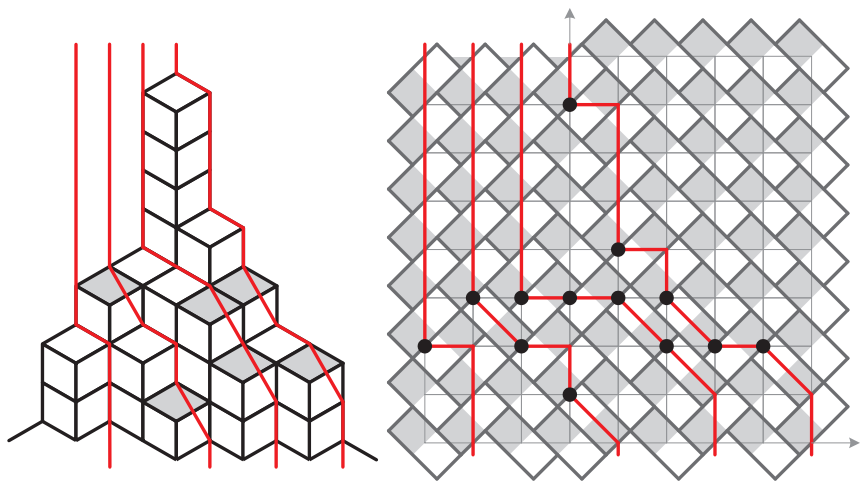}
\caption{A plane overpartition and its corresponding domino tiling}
\label{Fig6}
\end{figure}

We start from $\mathbb{R}^2$ and color it in a chessboard fashion
such that $(1/4,1/4)$, $(-1/4,3/4)$, $(1/4,5/4)$ and $(3/4,3/4)$ are
vertices of a white square. So, the axes of this infinite chessboard
form angles of $45$ and $135$ degrees with the axes of $\mathbb{R}^2$. A
domino placed on this infinite chessboard can be one of the four
types: $(1,1)$, $(-1,-1)$, $(-1,1)$ or $(1,-1)$, where we say that a
domino is of type $(x,y)$ if $(x,y)$ is the vector parallel to the
vector whose starting, respectively end point is the center of the
white, respectively black square of that domino.

Now, take a plane overpartition and represent it on this chessboard by its corresponding set of nonintersecting paths.  We cover each
edge by a domino that satisfies that the endpoints of that edge are
midpoints of sides of the black and white square of that domino.
In that way, we obtain a tiling of a part of the plane with dominoes of
three types: $(1,1)$, $(-1,-1)$ and $(1,-1)$. More precisely,
horizontal edges correspond to $(1,1)$ dominoes, vertical to
$(-1,-1)$ and diagonal to $(1,-1)$. To tile the whole plane we fill
the rest of it by dominoes of the fourth, $(-1,1)$ type. See Figure
\ref{Fig6} for an illustration.

In this way, we have established a correspondence between plane
overpartitions and plane tilings with dominoes. We now give some of
the properties of this correspondence. First, we describe how a
tiling changes  when we add or remove an overline or we
add or remove a box from a plane overpartition. We require that when
we add/remove an overline or a box we obtain a plane overpartition
again.

In terms of the 2-dimensional diagram of a plane partition, adding an overline
can occur at all places where $(t,x)$ is in the diagram and
$(t+1,x)$ is not. Adding an overline at $(t,x)$ means that a pair of horizontal and vertical
edges, $((t,x),(t+1,x))$ and $((t+1,x),(t+1,x-1))$, is replaced by one diagonal edge, $((t,x),(t+1,x-1))$. This means that the new
tiling differs from the old one by replacing a pair of $(1,1)$ and
$(-1,-1)$ dominoes by a pair of $(1,-1)$ and $(-1,1)$ dominoes. See
Figure \ref{Fig7}.
\begin{figure} [htp!]
\centering
\includegraphics[height=3.2cm]{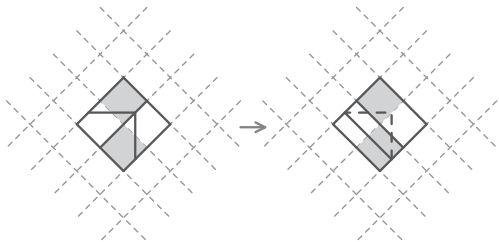}
\caption{Adding an overline} \label{Fig7}
\end{figure}
Removing an overline is the inverse of adding an overline.

We now explain the operation of removing a box.  Observe that if a box can be removed from a plane overpartition then the corresponding part is overlined or it can be overlined to obtain a plane
overpartition again. So, it is enough to consider how the tiling
changes when we remove an overlined box since we have
already considered the case of adding or removing an overline. If we
remove an overlined box we change a diagonal edge to a
pair of vertical and horizontal edges. If the box was represented by $(t,x)$ in the 2-dimensional diagram then the edge $((t,x),(t+1,x-1))$ is replaced by the pair of $((t,x),(t,x-1))$ and $((t,x-1),(t+1,x-1))$. This means that the new
tiling differs from the old one by replacing a pair of $(1,-1)$ and
$(-1,1)$ dominoes by a pair of $(1,1)$ and $(-1,-1)$ dominoes. See
Figure \ref{Fig8}. 
\begin{figure} [htp!]
\centering
\includegraphics[height=3.2cm]{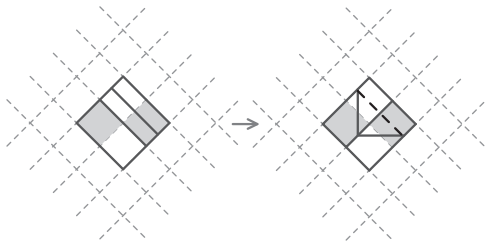}
\caption{Removing a box} \label{Fig8}
\end{figure}

All four operations are described by a swap of  a pair of adjacent $(1,1)$ and $(-1,-1)$ dominoes and a pair of adjacent of $(1,-1)$ and $(-1,1)$ dominoes.

We conclude this section by the observation that plane overpartitions
of a given shape $\lambda$ and whose parts are bounded by $n$ are in
bijection with domino tilings of the rectangle
$[-\ell(\lambda)+1,\lambda_1]\times [0,n]$ with certain boundary
conditions. These conditions are imposed by the fact that outside of
this rectangle nonintersecting paths are just straight lines. We
describe the boundary conditions precisely in the proposition below.
\begin{proposition}
The set $\mathcal{S}(\lambda) \cap \mathcal{L}(n)$ of all plane
overpartitions of shape $\lambda$ and whose largest part is at most
n is in bijection with plane tilings with dominoes where a point
$(t,x) \in \mathbb{Z} \times \mathbb{R}$ is covered by a domino of
type $(-1,-1)$ if
\begin{itemize}
\item $t\leq -\ell(\lambda),$
\item $-\ell(\lambda)<t\leq 0$ and $x\geq n,$
\item $t = \lambda_i-i+1$ for some $i$ and $x \leq0,$
\end{itemize}
and a domino of type $(-1,1)$ if
\begin{itemize}
\item $t>\lambda_1,$
\item $0<t\leq \lambda_1$ and $x\geq n+1/2,$
\item $t \neq \lambda_i-i+1$ for all $i$ and $x \leq -1/2.$
\end{itemize}
%border components
%of the plane overpartition correspond to the flips of the
%tiling.
\end{proposition}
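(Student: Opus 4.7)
The plan is to combine the bijection between plane overpartitions and nonintersecting paths from Section~3 with the path-to-tiling correspondence just established in this section, and then verify that the resulting tilings satisfy the stated boundary conditions outside the active rectangle $[-\ell(\lambda)+1,\lambda_1]\times[0,n]$. An element $\Pi \in \mathcal{S}(\lambda)\cap\mathcal{L}(n)$ determines $\ell(\lambda)$ nonintersecting ``active'' paths in the 2-dimensional diagram, where the $i$-th path enters the rectangle from above at column $t=1-i$ (at height $x=n$) and exits from below at column $t=\lambda_i-i+1$ (at height $x=0$), with its edges inside the rectangle encoding the entries of the $i$-th row. Outside the rectangle each active path is extended as a straight vertical ray, upward from its entry point and downward from its exit point, and for completeness we add ``empty row'' vertical lines at every $t=1-i$ for $i>\ell(\lambda)$, i.e.\ at all $t\leq -\ell(\lambda)$. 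The edge-to-domino correspondence from the beginning of the section then turns vertical segments into $(-1,-1)$ dominos, horizontal into $(1,1)$, diagonal into $(1,-1)$, and absence of a path into the background $(-1,1)$; since horizontal and diagonal edges occur only inside the rectangle, the boundary behaviour is dictated entirely by the presence or absence of vertical extensions.

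A short case analysis then matches the six conditions in the proposition. For $t\leq -\ell(\lambda)$ the column carries a complete empty-row vertical line, giving $(-1,-1)$ throughout. For $-\ell(\lambda)<t\leq 0$ and $x\geq n$ the column is an entry point $1-i$ of some active path, since $\{1-i:1\leq i\leq\ell(\lambda)\}=\{1-\ell(\lambda),\dots,0\}$, and the upward vertical extension gives $(-1,-1)$. For $t>\lambda_1$ no entry or exit lies in this range, so the column is pure background $(-1,1)$. For $0<t\leq\lambda_1$ and $x\geq n+1/2$ no entries sit above the rectangle, so the region is background. For $t=\lambda_i-i+1$ with $x\leq 0$ the downward exit extension yields $(-1,-1)$, while for $t\neq \lambda_i-i+1$ and $x\leq -1/2$ there is no exit extension, so the region is again background. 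This exhausts all points outside the rectangle and verifies the forward direction; for the inverse, the boundary conditions force the exterior tiling to be the extension described above, so the interior data inside the rectangle is free and can be read off as a set of nonintersecting paths, which by the Section~3 bijection determines a unique plane overpartition in $\mathcal{S}(\lambda)\cap\mathcal{L}(n)$.

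I expect the main technical obstacle to be not a deep argument but the careful bookkeeping of coordinates at the interface between the active rectangle and its semi-infinite complement: matching the half-integer thresholds $n+1/2$ and $-1/2$ to the $1/4$ offset of the chessboard colouring so that the top and bottom rows of dominos of the rectangle are handled consistently, and verifying that the entry columns $\{1-i\}$ and exit columns $\{\lambda_i-i+1\}$ exhaust all columns carrying vertical extensions outside the rectangle. One should also confirm that extensions coming from distinct active paths do not collide: entry columns lie in $\{1-\ell(\lambda),\dots,0\}$, exit columns lie in $\{\lambda_\ell-\ell+1,\dots,\lambda_1\}\subseteq (-\ell(\lambda),\lambda_1]$, and any overlap corresponds to an entry (above $x=n$) and an exit (below $x=0$) at the same column, which live in disjoint $x$-ranges and therefore do not interfere.
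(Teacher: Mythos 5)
Your proposal is correct and follows the same route the paper takes: the paper justifies the proposition by observing that outside the rectangle $[-\ell(\lambda)+1,\lambda_1]\times[0,n]$ the nonintersecting paths from the Section~3 bijection are just straight vertical lines, so the boundary conditions on the domino types are exactly the image of these straight-line extensions (and of the absence of paths) under the edge-to-domino dictionary. Your case analysis of the six regions and the check that the entry columns $\{1-i\}$ and exit columns $\{\lambda_i-i+1\}$ account for all vertical rays is precisely the bookkeeping the paper leaves implicit.
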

%Moreover this bijection is such that the number border components of the plane overpartition
%plus the number of non overlined (resp. overlined)
%parts of  the plane overpartition that can be and increased (resp. decreased) by one and overlined (resp. non overlined)  %so
%that the plane overpartition is still a plane overpartition is in one-to-one correspondence with pairs
%of adjacent dominoes that can be swapped.
The boundary conditions for  $\lambda=(5,4,3,1)$ and $n=7$ are shown in  Figure \ref{Fig9}. The example from Figure \ref{Fig6} satisfies these boundary conditions.
\begin{figure} [htp!]
\centering
\includegraphics[height=7cm]{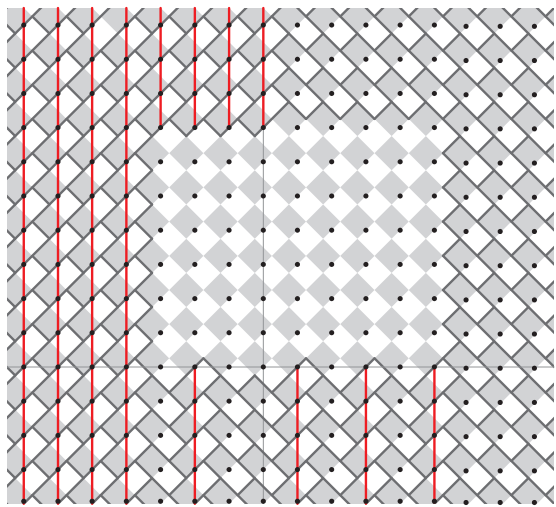}
\caption{Boundary conditions} \label{Fig9}
\end{figure}

\section{Robinson-Schensted-Knuth (RSK) type algorithm for plane overpartitions}
%%%%%%%%%%%%%%%%%%%%%%%%%%%%%%%%%%%%%%%%%%%
In this section we are going to give a bijection between certain matrices
and pairs of plane overpartitions of the same shape. This
bijection is obtained by an algorithm similar to the algorithm RS2 of Berele and Remmel
\cite{berele:remmel:1985} which gives a bijection between matrices
and pairs of $(k,\ell)$-semistandard tableaux. These tableaux are
super semistandard tableaux, defined in Section 3,
where the nonoverlined (resp. overlined) parts
are less than or equal to $k$ (resp. $\ell$).

We then apply properties of this algorithm to enumerate plane
overpartitions, as done by Bender and Knuth \cite{BK} for plane
partitions.

\subsection{The RSK algorithm}
Let $M=\begin{pmatrix}A & B \\C & D\end{pmatrix}$ be a $2n\times 2n$
matrix, made of four $n\times n$ blocks $A,B,C$ and $D$. The blocks $A$ and $D$ are nonnegative integer matrices, and
$B$ and $C$ are $\left\{0,1\right\}$ matrices. We denote the set of all such matrices with $\mathcal{M}_n$. We represent a matrix in $\mathcal{M}_n$ by a sequence of pairs of numbers  $\begin{pmatrix}i\\j\end{pmatrix}$, $\begin{pmatrix}i\\\bar{j}\end{pmatrix}$, $\begin{pmatrix}\bar{i}\\j\end{pmatrix}$ and 
$\begin{pmatrix}\bar{i}\\\bar{j}\end{pmatrix}$.

The encoding of $M$ into pairs is made using the following rules:
\begin{itemize}
    \item for each nonzero entry $a_{ij}$ of $A$, we create $a_{ij}$ pairs $\begin{pmatrix}i\\j\end{pmatrix}$,
    \item for each nonzero entry $b_{ij}$ of $B$, we create one pair $\begin{pmatrix}i\\\bar{j}\end{pmatrix}$,
    \item for each nonzero entry $c_{ij}$ of $C$, we create one pair $\begin{pmatrix}\bar{i}\\j\end{pmatrix}$,
    \item for each nonzero entry $d_{ij}$ of $D$, we create $d_{ij}$ pairs $\begin{pmatrix}\bar{i}\\\bar{j}\end{pmatrix}$.
\end{itemize}

%\vskip 0.5cm

%It is clear that this encoding defines a one-to-one correspondence
%between matrices and pairs of numbers.

%\vskip 0.5cm

%\begin{example}
For example let $M=\begin{pmatrix}0 & 2 & 1 & 0 \\2 & 0 & 1 & 0 \\1 & 1 & 0 &
1 \\0 & 0 & 1 & 1\end{pmatrix}$. After encoding $M$, we obtain
\begin{center} \[\begin{pmatrix}1\\2\end{pmatrix},\begin{pmatrix}1\\2\end{pmatrix},\begin{pmatrix}1\\\bar{1}\end{pmatrix},\begin{pmatrix}2\\1\end{pmatrix},\begin{pmatrix}2\\1\end{pmatrix}, \begin{pmatrix}2\\\bar{1}\end{pmatrix},\begin{pmatrix}\bar{1}\\1\end{pmatrix},\begin{pmatrix}\bar{1}\\2\end{pmatrix},\begin{pmatrix}\bar{1}\\\bar{2}\end{pmatrix},\begin{pmatrix}\bar{2}\\\bar{1}\end{pmatrix}, \begin{pmatrix}\bar{2}\\\bar{2}\end{pmatrix}.\]
\end{center}
%\end{example}

\vskip 0.5cm

From now on, we fix the order $\bar{1}<1<\bar{2}<2<\bar{3}<3<...$. We
sort the pairs to create a two--line array $L$ such that
\begin{itemize}
    \item the first line is a nonincreasing sequence,
    \item if two entries of the first line are equal and overlined
    (resp. nonoverlined) then the corresponding entries in the
    second line are in weakly increasing (resp. decreasing) order.
\end{itemize}

%\vskip 0.5cm

%\begin{example}[Example 1 continued]
For the example above, after sorting, we obtain the two--line array 
$$
L=\begin{pmatrix}2,2,2,\bar{2},\bar{2},1,1,1,\bar{1},\bar{1},\bar{1}\\
1,1,\bar{1},\bar{1},\bar{2},2,2,\bar{1},1,\bar{2},2\end{pmatrix} .
$$
%\end{example}

%\vskip 0.5cm

We now describe the
\emph{insertion algorithm}. It is based on an algorithm proposed
by Knuth in \cite{K} and quite similar to the algorithm
RS2 of \cite{berele:remmel:1985}. 

We first explain how to insert a number $j$ into an overpartition $\lambda$ of length $\ell$, so that the sequence $\lambda$ is still an overpartition. If  $j$ can be inserted at the end of $\lambda$, then insert and stop. Otherwise, find the smallest part that can be replaced by $j$. Bump this part and insert $j$.  At the end we obtain a new overpartition that contains $j$ as a part and whose length is $\ell$ if we bumped a part or $\ell +1$ if $j$ was added at the end and no part was bumped.  For example if $\lambda=(4,3,3,\bar{3},2)$ then
\begin{itemize}
\item insert 5, obtain $(5,3,3,\bar{3},2)$ and bump 4,
\item insert 3, obtain $(4,3,3,3,2)$ and bump $\bar{3}$,
\item insert $\bar{3}$, obtain $(4,3,3,\bar{3},2)$ and bump $\bar{3}$,
\item insert 1, obtain $(4,3,3,\bar{3},2,1)$ and bump nothing.
\end{itemize}

%We now describe the part of the bijection which is the
%\emph{insertion algorithm}. It is based on an algorithm proposed
%by Knuth in \cite{K} and quite similar to the algorithm
%RS2 of \cite{berele:remmel:1985}. 

To insert $j$ into a plane overpartition $P$ insert $j$ in the first row. If nothing is bumped then stop. Otherwise,  insert the bumped part in the second row. If something is bumped from the second row, insert it in the third and so on. Stop when nothing is bumped.
 For example when $\bar{3}$ is inserted in 
 $$
\begin{array}{lllll} 4&3&3&\bar{3}&2\\
3&\bar{3}&2&\\
1&&&&\\
 \end{array}
 \quad
 \textrm{we obtain}
 \quad 
 \begin{array}{lllll} 4&3&3&\bar{3}&2\\
3&\bar{3}&2&\\
\bar{3}&&&&\\
1&&&&\\
 \end{array}.
 $$

%We now describe the part of the bijection which is the
%\emph{insertion algorithm}. It is based on an algorithm proposed
%by Knuth in \cite{K} and quite similar to the algorithm
%RS2 of \cite{berele:remmel:1985}. 
%We first define how to insert the entry $j$ into a plane
%overpartition $P$.
%\begin{enumerate}
%\item $i\leftarrow 1$ \item $x_i\leftarrow j$ \item If $x_i$ is
%smaller than all entries of the $i^{th}$ row of $P$, then insert
%$x_i$ at the end of this row. \item Otherwise let $j_i$ be the index
%such that $P_{i,j_1-1}\ge x_i>P_{i,j_i}$.
%\begin{enumerate}
%\item If $P_{i,j_1-1}=x_i$ and $P_{i,j_i-1}$ is overlined, then
%$x_{i+1}\leftarrow x_i$.
% \item Otherwise
%\begin{enumerate}
%\item $x_{i+1}\leftarrow P_{i,j_i}$\item $P_{i,j_i}\leftarrow x_i$
%\end{enumerate}
%\end{enumerate}
%\item $i\leftarrow i+1$. Go to step 3.
%\end{enumerate}

We define how to insert a pair
$\begin{pmatrix}i\\j\end{pmatrix}$ into a pair of plane
overpartitions of the same shape $(P,Q)$.
We first insert $j$ in $P$ with the insertion algorithm. If the insertion ends in column $c$
and row $r$ of $P$, then insert $i$ in column $c$ and row $r$ in
$Q$.
Finally going from the two--line array $L$ to pairs of plane
overpartitions of the same shape works as follows: start with two
empty plane overpartitions and insert each pair of $L$ going from
left to right. This is identical to the classical RSK
algorithm \cite{K}.

Continuing with the previous example and applying the insertion algorithm we get
$$
\begin{array}{ll}
P=\begin{array}{lll} 2&2&2\\
\bar{2}&1&1\\
\bar{2}&\bar{1}&\\
1&&\\
\bar{1}&&\\
\bar{1}&& \end{array}, \ \ \ \ \ \
Q=\begin{array}{lll} 2&2&2\\
\bar{2}&1&1\\
\bar{2}&\bar{1}&\\
1&&\\
\bar{1}&&\\
\bar{1}&& \end{array}\end{array}.
$$

Let $\mathcal{L}(n)$ be the set of all plane overpartitions  with the largest entry at most  $n$.

\begin{theorem}
There is a one-to-one correspondence between matrices $M\in\mathcal{M}_n$ and
pairs of plane overpartitions of the same shape $(P,Q) \in \mathcal{L}(n) \times \mathcal{L}(n)$.
This correspondence is such that~:
\begin{itemize}
\item $k$ appears in $P$ exactly $\sum_i a_{ik}+c_{ik}$ times, 
\item $\bar{k}$ appears in $P$ exactly $\sum_i b_{ik}+d_{ik}$ times, 
\item $k$ appears in $Q$ exactly $\sum_i a_{ik}+b_{ik}$ times,
 \item $\bar{k}$ appears in $Q$ exactly $\sum_i c_{ik}+d_{ik}$  times.
\end{itemize}
\end{theorem}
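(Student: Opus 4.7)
The plan is to follow the classical RSK methodology, adapted to the overpartition setting and closely parallel to the Berele--Remmel algorithm RS2 cited in the excerpt. I would prove the theorem in four stages: (1) well-definedness of the forward map, (2) construction and correctness of the inverse, (3) verification that the two maps are mutually inverse, (4) the counting statistics.

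First, for the forward direction I would verify by induction on the length of the two--line array $L$ that after inserting the pair $\binom{i}{j}$ into $(P,Q)$, the resulting pair consists of two plane overpartitions of a common shape. The key ingredient is a \emph{bumping lemma}: if $j$ is inserted into a row which is an overpartition and bumps out $j'$, then $j' \le j$ in the order $\bar1<1<\bar2<2<\cdots$, with the precise sharpening that if $j$ is overlined then $j'$ is strictly smaller, and if $j$ is not overlined then $j' \le j$ with equality allowed only on non-overlined letters. From this one deduces, row by row, that all rows of $P$ remain overpartitions and that the columns satisfy the required overlined/non-overlined alternation (first occurrence possibly overlined, subsequent occurrences overlined). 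One then checks that the cell of $Q$ where the new box is appended receives a label which respects both the row overpartition rule and the column rule of $Q$; this is exactly where the sort conventions on $L$ are used (first line nonincreasing; within equal values, overlined letters paired in weakly increasing order and non-overlined in weakly decreasing order).

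Second, I would define the inverse as a reverse-bumping procedure. Given $(P,Q)$, locate in $Q$ the box holding the smallest entry under the order $\bar1<1<\bar2<2<\cdots$, breaking ties by selecting the rightmost box among non-overlined entries and the leftmost among overlined entries, so as to match the sorting rule of $L$. Record this label as $i$ (with or without overline), delete the box from $Q$, and reverse-bump the corresponding box out of $P$ row by row, producing a letter $j$ ejected from the first row. The pair $\binom{i}{j}$ is then prepended to the output array. Iterating strips $(P,Q)$ down to a pair of empty tableaux and reconstructs $L$, which then decodes uniquely back to $M \in \mathcal{M}_n$.

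The main obstacle will be verifying consistency of the tie-breaking and overline-handling conventions across both the row and column constraints simultaneously: unlike classical RSK, each letter carries a bar or not, and this status interacts with row monotonicity one way and column monotonicity the opposite way. Concretely, I expect the subtle case to be when a bumped overlined letter $\bar k$ lands in a row that already contains a $k$ or a $\bar k$, where one must check that the column of $P$ into which the new box is placed neither violates the ``first occurrence only'' overlining rule nor accidentally unoverlines a required overlined entry. Once the bumping lemma is stated with the correct strict/weak inequalities on the barred alphabet, both the forward well-definedness and the reversibility fall out simultaneously.

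Finally, the four counting statements are immediate from the construction once the bijection is established: each pair $\binom{i}{j}$, $\binom{i}{\bar j}$, $\binom{\bar i}{j}$, or $\binom{\bar i}{\bar j}$ contributes exactly one occurrence of its second coordinate to $P$ and one occurrence of its first coordinate to $Q$, and the encoding rules between $M$ and $L$ convert these occurrences into the stated sums $\sum_i a_{ik}+c_{ik}$, $\sum_i b_{ik}+d_{ik}$, $\sum_i a_{ik}+b_{ik}$, and $\sum_i c_{ik}+d_{ik}$ respectively.
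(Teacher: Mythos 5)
Your overall strategy coincides with the paper's: the paper proves this theorem by asserting that the argument is identical to Knuth's proof for the classical RSK algorithm and to the proof for the algorithm RS2 of Berele and Remmel, deferring all details to the second author's thesis. So the architecture you propose (well-definedness of the forward insertion via a bumping lemma, an inverse by reverse bumping with tie-breaking matched to the sorting convention on the two-line array $L$, and the multiplicity counts read off directly from the encoding of $M$ into pairs) is exactly the intended one, and your last paragraph on the four counting statements is correct as stated.

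There is, however, a concrete error in the one place where you commit to details: your bumping lemma has the strict and weak cases interchanged, and as stated it is false. The paper's own worked example shows that inserting $\bar{3}$ into $(4,3,3,\bar{3},2)$ bumps out $\bar{3}$, so an overlined letter can bump an \emph{equal} overlined letter, contradicting your claim that ``if $j$ is overlined then $j'$ is strictly smaller.'' Conversely, a non-overlined $k$ can never bump a non-overlined $k$: if appending fails then the tail of the row consists of letters smaller than $k$ in the order $\bar{1}<1<\bar{2}<2<\cdots$, and among these there is always a replaceable one (either the letter just after the block of $k$'s, or the terminal $\bar{k}$ of that block), so the bumped letter is strictly smaller than $k$; hence ``equality allowed only on non-overlined letters'' is also wrong. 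This is not a cosmetic slip, because the asymmetry is precisely what drives the closure argument: the fact that $\bar{k}$ may bump $\bar{k}$ (which then lands weakly to the left, possibly directly below) is what allows columns of $P$ to repeat overlined letters, while the fact that $k$ always bumps something strictly smaller is what forbids a column from containing two non-overlined $k$'s; dually, rows may repeat non-overlined letters but not overlined ones. With the inequalities reversed, your row-by-row induction would establish the wrong alternation conditions and the well-definedness proof would not go through. Once the lemma is restated correctly (bumped letter $j'\le j$ always; $j'<j$ strictly when $j$ is non-overlined; $j'=j$ possible exactly when $j$ is overlined and already present in the row), the remainder of your plan, including the reverse-bumping inverse, is sound.
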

\begin{proof}
The proof is identical to the proof in the case of the RSK algorithm
\cite{K} or the RS2 algorithm \cite{berele:remmel:1985}.  Details
are given in \cite{sav}.
\end{proof}

\begin{theorem}
If the {insertion algorithm} produces $(P,Q)$ with input matrix
$M$, then the {insertion algorithm} produces $(Q,P)$ with input
matrix $M^T$. \label{insalg}
\end{theorem}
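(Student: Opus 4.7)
The plan is to adapt Knuth's classical proof of RSK symmetry \cite{K}, as extended by Berele and Remmel \cite{berele:remmel:1985} to the super setting, to our plane overpartition context.

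First I would examine how the encoding of a matrix into a two-line array behaves under transposition. Encoding $M^T$ produces precisely the pairs obtained from the encoding of $M$ by exchanging the two entries of each pair (keeping overlines with their original letters), so pairs $\binom{i}{\bar j}$ coming from block $B$ of $M$ become pairs $\binom{\bar j}{i}$ coming from block $C$ of $M^T$, and symmetrically for the other blocks. After re-sorting according to the rules of Section 5.1 one obtains a two-line array $L'$. The theorem then reduces to showing that inserting the columns of $L'$ into a pair of empty plane overpartitions produces $(Q,P)$, i.e.\ that the insertion procedure commutes with the operation of exchanging the two rows of the two-line array and exchanging the two output plane overpartitions.

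The main tool I would invoke is Viennot's geometric construction of RSK via shadow lines (equivalently, Fomin's local growth rules), adapted to decorated lattice points so as to handle the four kinds of pairs. Each column of $L$ is placed at a position in the plane determined by the ordered alphabet $\bar 1<1<\bar 2<2<\cdots$, with a decoration recording which of the four pair types it represents. The successive shadow lines compute the shapes arising in the construction of $P$, while the shadow lines of the reflected configuration compute those of $Q$. Since reflection across the main diagonal simultaneously transposes $M$ and exchanges the two outputs, the symmetry would follow once the shadow-line construction is shown to agree with the insertion algorithm of Section 5.1.

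The technical heart, and the main obstacle, is verifying this agreement in the presence of the asymmetric-looking tie-breaking conventions: equal non-overlined first entries are paired with decreasing seconds, while equal overlined first entries are paired with increasing seconds. These conventions were chosen precisely so that simultaneous insertions of columns with equal first coordinate behave symmetrically under reflection, but establishing this rigorously requires a careful bumping-path analysis analogous to Knuth's, together with a case check for each of the four pair types and for each pattern of interaction between consecutive insertions. Granted that verification, the theorem follows by a straightforward induction on the number of pairs in $L$, with the geometric picture guaranteeing symmetry at each step; the detailed case analysis may be carried out along the lines of \cite{sav}.
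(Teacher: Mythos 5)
Your strategy is sound and is, at bottom, the same as the paper's: the paper follows Knuth's Lemma~1 argument from \cite{K}, decomposing the two--line array into ``classes'' and reading the first rows of $P$ and $Q$ off the last, respectively first, entries of each class, and these classes are precisely the chains that your shadow lines (or Fomin growth rules) would trace out. So the geometric packaging and the order-theoretic packaging are two presentations of one proof, and both you and the paper ultimately defer the detailed bumping verification to \cite{sav}. The substantive difference is that the paper exhibits the concrete object that makes the symmetry manifest, whereas your write-up stops exactly where that object is needed. The paper declares $\binom{u_k}{v_k}$ and $\binom{u_m}{v_m}$ to lie in a common class if and only if $u_k\ge u_m$ with $u_m$ overlined in case of equality, \emph{and} $v_k\le v_m$ with $v_k$ overlined in case of equality; this relation is visibly invariant under exchanging the two rows of the array (together with swapping the roles of $k$ and $m$), so transposing $M$ preserves the class decomposition while reversing each class, and the recursion on the leftover pairs $\binom{u_{i,j+1}}{v_{ij}}$ then exchanges $P$ and $Q$ by induction. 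You instead flag the tie-breaking conventions as ``the main obstacle'' and ask the reader to grant the verification. To make your proposal a proof you must (i) state the decorated shadow-line rule explicitly --- which amounts to writing down the class relation above --- and check that it is symmetric under reflection in the presence of overlines, and (ii) verify that one generation of shadow lines reproduces one round of simultaneous insertion of all pairs in a class; item (ii) is the bumping-path case analysis you postpone, and item (i) is the missing idea that the paper supplies.
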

\begin{proof} The proof is again analogous to the one in \cite{K}.
Given a two--line array $
\begin{pmatrix}
u_1,\ldots ,u_N\\
v_1,\ldots ,v_N \end{pmatrix}$, we partition the pairs
$\begin{pmatrix}u_\ell\\ v_\ell\end{pmatrix}$ in classes such that 
$\begin{pmatrix}u_k\\ v_k\end{pmatrix}$
 and $\begin{pmatrix}u_m\\ v_m\end{pmatrix}$ are
in the same class if and only if~:
\begin{itemize}
\item $u_k\ge u_m$ and if $u_k=u_m$, then $u_m$ is overlined AND
\item $v_k\le v_m$ and if $v_k=v_m$, then $v_k$ is overlined.
\end{itemize}
Then one can sort each class so that the first entries of each pair
appear in nonincreasing order and then sort the classes so that
the first entries of the first pair of each class are in
nonincreasing order. For example if the two--line array is
\begin{center}
$
\begin{pmatrix}2,2,2,\bar{2},\bar{2},1,1,1,\bar{1},\bar{1},\bar{1}\\
1,1,\bar{1},\bar{1},\bar{2},2,2,\bar{1},1,\bar{2},2\end{pmatrix}
$,
\end{center}
we get the classes
$$
C_1=\left\{\begin{pmatrix}2\\1\end{pmatrix},\begin{pmatrix}\bar{2}\\\bar{2}\end{pmatrix},\begin{pmatrix}1\\2\end{pmatrix}\right\},\ \ 
C_2=\left\{\begin{pmatrix}2\\1\end{pmatrix},\begin{pmatrix}{1}\\{2}\end{pmatrix}\right\},\ \ 
$$
$$
C_3=\left\{
\begin{pmatrix}2\\\bar{1}\end{pmatrix},\begin{pmatrix}\bar{2}\\\bar{1}\end{pmatrix},\begin{pmatrix}1\\ \bar{1}\end{pmatrix},\begin{pmatrix}\bar{1}\\1\end{pmatrix},\begin{pmatrix}\bar{1}\\\bar{2}\end{pmatrix},\begin{pmatrix}\bar{1}\\2\end{pmatrix} \right\}.\ \ 
$$
If the classes are $C_1,\ldots ,C_d$ with
$$C_i=\left\{\begin{pmatrix}u_{i1}\\v_{i1}\end{pmatrix},\ldots ,\begin{pmatrix}u_{in_i}\\v_{in_i}\end{pmatrix}\right\}$$ then the
first row of $P$ is
$$v_{1n_1},\ldots ,v_{dn_d}$$ and the first row of $Q$ is
$$u_{11},\ldots ,u_{d1}.$$ Moreover one constructs the rest of $P$
and $Q$ using the pairs~:
$$
\bigcup_{i=1}^d
\bigcup_{j=1}^{n_i-1}\begin{pmatrix}u_{i,j+1}\\v_{ij}\end{pmatrix}.
$$
One can adapt the proof of Lemma 1 of \cite{K}  for a complete proof. 
This is done in the master thesis of the second author \cite{sav}. As
the two--line array corresponding to $M^T$ is obtained by
interchanging the two lines of the array and rearranging the
columns, the theorem follows.
\end{proof}
This implies that $M=M^T$ if and only if $P=Q$.
\begin{theorem} There is a one-to-one correspondence between
symmetric matrices $M \in \mathcal{M}_n$ and  plane overpartitions  $P\in \mathcal{L}(n)$. In this
correspondence:
\begin{itemize}
\item $k$ appears in $P$ exactly $\sum_i a_{ik}+c_{ik}$ times, and\item
$\bar{k}$ appears in $P$ exactly $\sum_i b_{ik}+d_{ik}$ times.
\end{itemize}
\label{symmetric}
\end{theorem}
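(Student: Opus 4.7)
The plan is to derive Theorem \ref{symmetric} as a straightforward corollary of Theorem \ref{insalg} together with the bijection from the preceding theorem that matches matrices $M \in \mathcal{M}_n$ with pairs $(P,Q)$ of plane overpartitions of the same shape in $\mathcal{L}(n)$. The desired statement is essentially the ``symmetric restriction'' of that bijection, in the spirit of the classical fact that symmetric matrices correspond under RSK to pairs $(P,P)$.

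First I would unpack what it means for $M = \begin{pmatrix} A & B \\ C & D \end{pmatrix} \in \mathcal{M}_n$ to be symmetric: this forces $A = A^T$, $D = D^T$, and $C = B^T$, and all such matrices do lie in $\mathcal{M}_n$ since transposing a $\{0,1\}$-matrix gives a $\{0,1\}$-matrix. Next, invoking the unrestricted bijection $M \leftrightarrow (P,Q)$, suppose $M = M^T$. By Theorem \ref{insalg}, the insertion algorithm sends $M^T$ to $(Q,P)$; but it also sends $M$ to $(P,Q)$, and $M = M^T$, so $(P,Q) = (Q,P)$, i.e.\ $P = Q$. Conversely, given any $P \in \mathcal{L}(n)$, set $Q = P$, let $M$ be the unique matrix with $M \mapsto (P,P)$; then $M^T \mapsto (P,P)$ as well, and the injectivity of the full bijection forces $M = M^T$. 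Thus $M \mapsto P$ is a bijection between symmetric matrices in $\mathcal{M}_n$ and plane overpartitions in $\mathcal{L}(n)$.

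The multiplicity statements are immediate: they are just the content statements of the preceding theorem about $(P,Q)$, specialized to the case $P = Q$. Specifically, the count of $k$ in $P$ is $\sum_i (a_{ik} + c_{ik})$ and the count of $\bar k$ in $P$ is $\sum_i (b_{ik} + d_{ik})$, exactly as asserted.

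The only potential subtlety—and what I would flag as the ``main obstacle,'' though it is minor—is making sure the block parity conditions are preserved under the symmetry: one must check that the constraint $C = B^T$ between the $\{0,1\}$ blocks is compatible with the encoding rules for pairs $\begin{pmatrix} i \\ \bar j \end{pmatrix}$ and $\begin{pmatrix} \bar i \\ j \end{pmatrix}$ and with the tie-breaking order $\bar{1} < 1 < \bar 2 < 2 < \cdots$ used to sort the two-line array. This is really just the remark that $\mathcal{M}_n$ is stable under transposition, after which the argument above closes cleanly with no further work.
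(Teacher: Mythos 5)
Your proposal is correct and matches the paper's own reasoning: the paper derives Theorem \ref{symmetric} as an immediate consequence of the bijection $M \leftrightarrow (P,Q)$ and Theorem \ref{insalg}, noting exactly as you do that $M=M^T$ if and only if $P=Q$, with the multiplicity statements specializing from the unrestricted case. Your extra remark about $\mathcal{M}_n$ being stable under transposition is a harmless clarification the paper leaves implicit.
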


\subsection{Enumeration of plane overpartitions}

We can get the
generating function for plane overpartitions whose largest entry
is at most $n$ from Theorem \ref{symmetric}. By this bijection, if $M$ is a  symmetric
matrix of size $2n\times 2n$ with blocks $A,B,C$ and $D$, each
of size $n\times n$ corresponding to a plane partition $\Pi \in \mathcal{L}(n)$, we have that
$$|\Pi|=\sum_{i,j} i(a_{ij}+b_{ij}+c_{ij}+d_{ij}) \quad \textrm{and}\quad o(\Pi)=\sum_{i,j} b_{ij}+d_{ij}.
$$
As $M$ is symmetric, i.e. $a_{ij}=a_{ji}$, $d_{ij}=d_{ji}$ and $b_{ij}=c_{ji}$, we can express the above formulas as
$$
|\Pi|=\sum_{i,j}(i+j)b_{ij}+\sum_{i} i(a_{ii}+d_{ii}) +\sum_{i<j}
(i+j)(a_{ij}+d_{ij})
$$
and
$$
o(\Pi)=\sum_{i,j} b_{ij}+2\sum_{i<j}d_{ij}+\sum_i d_{ii}.
$$

Let ${\mathcal{O}}_n(q,a)=\sum_{\Pi \in \mathcal{L}(n)} a^{o(\Pi)}q^{|\Pi|}$. 
We are now ready to prove Theorem \ref{boundedparts} which 
states that:
%\begin{theorem}
%The generating function ${\mathcal O}_n(q,a)$ for plane overpartitions whose
%largest entry is at most $n$ is
$$
{\mathcal O}_n(q,a)=\prod_{j=1}^n \frac{\prod_{i=0}^n(1+aq^{i+j})}
%\prod_{j=1}^n \frac{\prod_{i=0}^n(1+aq^{i+j})}
{\prod_{i=1}^{j}(1-q^{i+j-1})(1-a^2q^{i+j})}.
%{\prod_{i=1}^{j}(1-q^{i+j})(1-a^2q^{i+j})}.
$$
%\label{atmost}
%\end{theorem}
%\begin{proof}
Indeed,
\begin{eqnarray*}
{\mathcal O}_n(q,a)&=& \sum_{M \in \mathcal{M}_n} \prod_{j=1}^n\left(\prod_{i=1}^n (q^{i+j}a)^{b_{ij}}\right)\left(q^{ja_{jj}}(aq^j)^{d_{jj}}\right)
\left(\prod_{i=1}^{j-1}q^{(i+j)a_{ij}}(a^2q^{i+j})^{d_{ij}}\right)\\
&=& \prod_{j=1}^n\frac{\prod_{i=1}^n(1+aq^{i+j})}
{(1-q^j)(1-aq^j)\prod_{i=1}^{j-1}(1-q^{i+j})(1-a^2q^{i+j})}\\
%&=& \prod_{j=1}^n\frac{\prod_{i=1}^n(1+aq^{i+j})}
%{(1-q^i)(1-aq^i)\prod_{i=1}^{j-1}(1-q^{i+j})(1-a^2q^{i+j})}\times \frac{1+aq^j}{1+aq^j}\\
&=& \prod_{j=1}^n\frac{\prod_{i=0}^{n}(1+aq^{i+j})}
{\prod_{i=0}^{j-1}(1-q^{i+j})\prod_{i=1}^j(1-a^2q^{i+j})}.\\
\end{eqnarray*}
%\end{proof}

When $n$ tends to infinity we get back the weighted generating function
of Proposition \ref{propweighted}. We can also get this result with
the method of the proof of Proposition \ref{propweighted} with the substitution
$x_i=q^{i-1}$ for $1\le i\le n+1$ and 0 otherwise and
$y_i=-aq^i$ for $1\le i\le n$ and 0 otherwise.

%Let ${\mathcal O}(q,a)$ be the limit of ${\mathcal O}_n(q,a)$
%when $n$ tends to infinity.
%A direct consequence of Theorem \ref{atmost} is the following.
%\begin{corollary}
%The generating function for plane overpartitions ${\mathcal O}(q,a)$ is
%$$
%\prod_{i=1}^\infty \frac{(1+aq^i)^{i}}{(1-q^i)^{\lceil
%i/2\rceil}(1-a^2q^i)^{\lfloor i/2\rfloor}}.
%$$
%\end{corollary}

We can also get some more general results, as in \cite{BK}.
\begin{theorem}
The generating function for plane overpartitions whose parts lie in
a set $S$ of positive integers is given by:
\begin{equation*}
\prod_{i\in S}\left(\frac{\prod_{j\in
S}(1+aq^{i+j})}{(1-q^i)(1-aq^i)} \prod_{\begin{subarray}{c}j\in
S\\{j<i}\end{subarray}}\frac{1}{(1-q^{i+j})(1-a^2q^{i+j})}\right).
\end{equation*}
\end{theorem}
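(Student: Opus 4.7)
The plan is to mimic exactly the derivation of Theorem \ref{boundedparts} given just above, but applied to the RSK bijection of Theorem \ref{symmetric} restricted to matrices whose nonzero entries are supported on indices in $S$. Since $P \in \mathcal{L}(S)$ (plane overpartitions with all parts in $S$) corresponds under Theorem \ref{symmetric} to a symmetric matrix $M$ in which $k$ (overlined or not) can appear only if some entry in row or column $k$ is nonzero, the condition that all parts lie in $S$ translates to $M$ being supported on $S \times S$.

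The key step is to rewrite the weight statistics. Using the symmetry $a_{ij}=a_{ji}$, $d_{ij}=d_{ji}$, $b_{ij}=c_{ji}$ exactly as in the paragraph preceding Theorem \ref{boundedparts}, one obtains
\begin{align*}
|\Pi| &= \sum_{i,j\in S}(i+j)\,b_{ij} + \sum_{i\in S} i\,(a_{ii}+d_{ii}) + \sum_{\substack{i,j\in S\\ i<j}}(i+j)(a_{ij}+d_{ij}),\\
o(\Pi) &= \sum_{i,j\in S} b_{ij} + \sum_{i\in S} d_{ii} + 2\sum_{\substack{i,j\in S\\ i<j}} d_{ij}.
\end{align*}
Because the independent variables are $\{b_{ij}\}_{i,j\in S}\subset\{0,1\}$, $\{a_{ii},d_{ii}\}_{i\in S}\subset\mathbb{Z}_{\ge 0}$, and $\{a_{ij},d_{ij}\}_{i<j,\,i,j\in S}\subset\mathbb{Z}_{\ge 0}$, the generating function factorizes as an infinite product.

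Summing each independent factor as a geometric series (for the integer entries) or a two-term sum (for the $0/1$ entries) gives
\[
\sum_{P}a^{o(P)}q^{|P|}=\prod_{i\in S}\frac{1}{(1-q^i)(1-aq^i)}\prod_{\substack{i,j\in S\\ i<j}}\frac{1}{(1-q^{i+j})(1-a^2q^{i+j})}\prod_{i,j\in S}(1+aq^{i+j}).
\]
Finally, regrouping the two double products as $\prod_{i\in S}\prod_{\substack{j\in S\\ j<i}}$ and $\prod_{i\in S}\prod_{j\in S}$ respectively yields the stated formula. No step presents a real obstacle once Theorem \ref{symmetric} is in hand; the only thing to watch is the careful bookkeeping of diagonal versus off-diagonal entries in the symmetric matrix, exactly as in the proof of Theorem \ref{boundedparts}.
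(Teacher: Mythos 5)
Your proof is correct and is essentially the paper's intended argument: the theorem is stated as a direct generalization of the computation of $\mathcal{O}_n(q,a)$, obtained by restricting the symmetric matrices of Theorem \ref{symmetric} to those supported on $S\times S$ and factoring the weight over the independent entries exactly as you do. The bookkeeping of diagonal versus off-diagonal entries and the final regrouping of the products both match the paper.
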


For example, as a corollary we get the generating function for plane overpartition with odd parts.  One can get a similar formula for even parts.
\begin{corollary}
The generating function for plane overpartitions with odd parts is
$$
\prod_{i=1}^\infty
\frac{(1+aq^{2i})^{i-1}}{(1-q^{2i-1})(1-aq^{2i-1})(1-q^{2i})^{\lfloor
i/2\rfloor}(1-a^2q^{2i})^{\lfloor i/2\rfloor}}.
$$
\end{corollary}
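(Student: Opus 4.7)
The plan is a direct specialization of the preceding theorem to $S = \{1, 3, 5, \ldots\}$, the set of positive odd integers. The substance of the proof is purely arithmetic: bookkeeping the multiplicity with which each factor appears in the resulting infinite product.

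First, I would parametrize $i = 2k-1$ and $j = 2l-1$ with $k, l \geq 1$. The factors $(1-q^i)(1-aq^i)$ from the theorem immediately yield $(1-q^{2k-1})(1-aq^{2k-1})$, giving the corresponding factors in the corollary's denominator after relabeling $k$ as $i$.

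Since $i+j = 2(k+l-1)$ is always even, I would then collect multiplicities by writing $m = k+l-1$. The factor $(1+aq^{2m})$ in the numerator arises from the double product $\prod_{i,j \in S}(1+aq^{i+j})$ and appears with multiplicity equal to the number of pairs $(k,l)$ of positive integers with $k+l = m+1$. The factors $(1-q^{2m})$ and $(1-a^2q^{2m})$ in the denominator arise from the restricted product $\prod_{j<i}$, so their multiplicity is the number of such pairs additionally satisfying $l < k$, equivalent to $l \leq \lfloor m/2 \rfloor$. Rearranging the product and relabeling $m$ as $i$ yields the stated formula.

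No real obstacle arises; the heavy machinery lives in the preceding theorem, and this corollary reduces to careful index manipulation. The analogous formula for plane overpartitions with even parts would be obtained by the same method with $S = \{2,4,6,\ldots\}$.
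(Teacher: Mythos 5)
Your route---specializing the preceding theorem to $S=\{1,3,5,\ldots\}$ and tallying the multiplicity of each factor---is exactly the derivation the paper intends, and your denominator bookkeeping is correct: the factors $(1-q^{2k-1})(1-aq^{2k-1})$ come out right, and the number of pairs $(k,l)$ with $k+l=m+1$ and $l<k$ is indeed $\lfloor m/2\rfloor$. The problem is in the numerator, where your argument does not actually land on the stated formula. The multiplicity you (correctly) identify for $(1+aq^{2m})$ is the number of ordered pairs $(k,l)$ of positive integers with $k+l=m+1$, and that number is $m$, whereas the exponent in the statement is $m-1$. The sentence ``relabeling $m$ as $i$ yields the stated formula'' therefore hides an off-by-one discrepancy: either you silently miscounted the pairs as $m-1$, or you did not check that your count matches the printed exponent.

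In fact your count is the correct one and the printed corollary appears to be missing a factor of $\prod_{i\geq 1}(1+aq^{2i})$. A low-order check settles it: the plane overpartitions with odd parts and weight $2$ are the row fillings $1\,1$ and $1\,\bar{1}$ and the column fillings with entries $1,\bar{1}$ and $\bar{1},\bar{1}$, giving coefficient $(1+a)^2=1+2a+a^2$ at $q^2$; the factor $\frac{(1+aq^2)^{e_1}}{(1-q)(1-aq)}$ contributes $1+a+a^2+e_1a$ there, forcing $e_1=1$, while the statement asserts $e_1=0$. So to make your proof complete you must either note that the stated exponent $i-1$ should be $i$ and prove the corrected identity (which your multiplicity count does), or explain why the count of ordered pairs with $k+l=m+1$ would be $m-1$ --- and it is not.
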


%%%%%%%%%%%%%%%%%%%%%%%%%%%%%%%
\section{Interlacing sequences and cylindric partitions} \label{s1}
%%%%%%%%%%%%%%%%%%%%%%%%%%%%%%%We say that a sequence of partitions $\lambda=(\lambda^1,\dots,\lambda^T)$ is \textit{interlacing} if $\lambda^i/\lambda^{i+1}$ or $\lambda^{i+1}/\lambda^i$ is a horizontal strip. The profile of an interlacing sequence $\lambda=(\lambda^1,\dots,\lambda^T)$ is a sequence $A=(A_1,\dots,A_{T-1})$ of 0's and 1's such that  and $A_i=1$ if $\lambda^i/\lambda^{i+1}$ is a horizontal strip and 0 otherwise. To a profile $A$ we asociate a sequence $B=(B_1,\dots,B_{T-1})$ such that $A_i+B_i=1$. The \textit{diagram} of an interlacing sequence is the set of boxes filled with parts of $\lambda^i$'s as shown in Figure ..... Namely, the $i$th  diagonal represents $\lambda^i$ and if the first part of $\lambda^i$ is placed at $(i,j)$ then the first part of $\lambda^{i+1}$ is placed at $(i+1,j-1)$ if $A_i=1$ or $(i+1,j+1)$ otherwise.

%\begin{figure} [htp!]
%\centering \includegraphics[height=6cm]{BC3.eps}
%\caption{Border Components}
%\label{BorderComponents} \end{figure}
We want to combine results of \cite{Bo} and \cite{V2} to obtain a
1--parameter generalization of the formula for the generating
function for cylindric partitions related to Hall-Littlewood
symmetric
functions. %We also give a 1--parameter hook formula for plane
%partitions of a certain profile.

We use the definitions of interlacing sequences, profiles, cylindric
partitions, polynomials $A_\Pi(t)$ and $A_\Pi^{\text{cyl}}(t)$ given
in the introduction.

For an ordinary partition $\lambda$ we define a polynomial
\begin{equation}\label{formulab}
b_\lambda(t)=\prod_{i\geq1}\varphi_{m_i(\lambda)}(t),
\end{equation}
where $m_i(\lambda)$ denotes the number of times $i$ occurs as a part of $\lambda$ and $\varphi_r(t)=(1-t)(1-t^2)\cdots(1-t^r)$.

%If a cylindric partition $\Pi$ is given by a sequence
%$\Lambda=(\lambda^0,\dots,\lambda^T)$ then
%\begin{equation}\label{vezacyl}
%A^{\text{cyl}}_\Pi(t)=A_{\Lambda}(t)/b_{\lambda^0}(t).
%\end{equation}

For a horizontal strip $\theta=\lambda/\mu$ we define
$$
\begin{array}{c}
I_{\lambda/\mu}=\left\{i\geq1|\,\theta_i^\prime=1 \text{ and } \theta_{i+1}^\prime=0 \right\}\\
J_{\lambda/\mu}=\left\{j\geq1|\,\theta_j^\prime=0 \text{ and } \theta_{j+1}^\prime=1 \right\}
\end{array}.
$$
Let
\begin{equation}\label{definphi}
\varphi_{\lambda/\mu}(t)=\prod_{i \in
I_{\lambda/\mu}}(1-t^{m_i(\lambda)}) \text{ and } \psi_{\lambda/\mu}(t)=\prod_{j \in
J_{\lambda/\mu}}(1-t^{m_j(\mu)}).
\end{equation}
Then
$$
\varphi_{\lambda/\mu}(t)/\psi_{\lambda/\mu}(t)=b_\lambda(t)/b_\mu(t).
$$

For an interlacing sequence $\Lambda=(\lambda^1,\dots,\lambda^T)$
with profile $A=(A_1,\dots,A_{T-1})$ we define $\Phi_{\Lambda}(t)$:
\begin{equation}\label{alternation}
\Phi_{\Lambda}(t)=\prod_{i=1}^{T-1} \phi_{i}(t),
\end{equation}
where
$$
\phi_{i}(t)=
\begin{cases}
\varphi_{\lambda^{i+1}/\lambda^i}(t),&A_i=0,\\
\psi_{\lambda^{i}/\lambda^{i+1}}(t),&A_i=1.
\end{cases}
$$

For $\Lambda=(\lambda^1,\dots,\lambda^T)$ and $M=(\mu^1,\dots,\mu^S)$ such that $\lambda^T=\mu^1$ we define
$$
\Lambda \cdot M=(\lambda^1,\dots,\lambda^T,\mu^2,\dots,\mu^S)
$$
and
$$
\Lambda \cap M=\lambda^T.
$$
Then
\begin{equation}\label{APhiproiz}
A_{\Lambda \cdot M}=\frac{A_{\Lambda}A_M}{b_{\Lambda \cap
M}},\ \ \ \ \Phi_{\Lambda \cdot
M}=\Phi_{\Lambda }\Phi_{M}.
\end{equation}

For an interlacing sequence $\Lambda=(\lambda^1,\dots,\lambda^T)$
with profile $A=(A_1,\dots, A_{T-1})$ we define the reverse
$\overline{\Lambda}=(\lambda^T,\dots,\lambda^1)$ with profile 
$\overline{A}=(1-A_{T-1},\dots,1-A_1)$. Then
\begin{equation}\label{APhirev}
A_{\overline{\Lambda}}=A_{\Lambda},\ \ \ \ 
\Phi_{\overline{\Lambda}}=\frac{b_{\lambda^1}\Phi_{\Lambda
}}{b_{\lambda^T}}.
\end{equation}

For an ordinary partition $\lambda$ we construct an interlacing
sequence $\left\langle \lambda \right\rangle=(\emptyset,
\lambda^1,\dots, \lambda^L)$ of length $L+1=\ell(\lambda)+1$, where
$\lambda^i$ is obtained from $\lambda$ by truncating the last $L-i$
parts. Then
\begin{equation}\label{APhiseq}
A_{\left\langle \lambda \right\rangle}=\Phi_{\left\langle \lambda \right\rangle}=b_\lambda.
\end{equation}

In \cite{V2} (Propositions 2.4 and 2.6) it was shown
shown that for a plane partition $\Pi$
\begin{equation}\label{APhiplane}
\Phi_{\Pi}=A_{\Pi}.
\end{equation}
The following proposition is an analogue of (\ref{APhiplane}) for interlacing sequences.

\begin{proposition} If  $\Lambda=(\lambda^1,\dots,\lambda^T)$ is an interlacing sequence then
$$
b_{\lambda^1}\Phi_{\Lambda}={A_{\Lambda}}.
$$
\end{proposition}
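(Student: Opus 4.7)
The plan is to prove the identity by induction on $T$, with $T=2$ as the base case treated via a plane-partition embedding, and the inductive step carried out using the concatenation identities (\ref{APhiproiz}).

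For the base case $T=2$, write $\Lambda=(\mu,\nu)$, so $\Phi_\Lambda$ is the single factor $\phi_1$ determined by the one profile entry $A_1\in\{0,1\}$. The idea is to embed $\Lambda$ into a plane partition by prepending the up-sequence $\langle\mu\rangle$ and appending the down-sequence $\overline{\langle\nu\rangle}$:
$$
\Pi=\langle\mu\rangle\cdot\Lambda\cdot\overline{\langle\nu\rangle}.
$$
The profile of $\Pi$ is $(0,\ldots,0,A_1,1,\ldots,1)$, which consists of $0$'s followed by $1$'s regardless of whether $A_1=0$ or $A_1=1$, so $\Pi$ is a plane partition in the sense of the paper. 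Applying the concatenation formula in (\ref{APhiproiz}) twice together with (\ref{APhiseq}) and (\ref{APhirev}) gives $A_\Pi=A_\Lambda$ (the factors $b_\mu,b_\nu$ coming from $A_{\langle\mu\rangle}$ and $A_{\overline{\langle\nu\rangle}}$ cancel the denominators introduced by concatenation) and $\Phi_\Pi=b_\mu\Phi_\Lambda$ (using $\Phi_{\overline{\langle\nu\rangle}}=b_\emptyset\Phi_{\langle\nu\rangle}/b_\nu=1$ from (\ref{APhirev})). Invoking the plane-partition identity (\ref{APhiplane}) for $\Pi$ then yields $b_\mu\Phi_\Lambda=A_\Lambda$.

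For the inductive step, assume the result for length $T-1$ and split $\Lambda=\Lambda'\cdot M$ with $\Lambda'=(\lambda^1,\ldots,\lambda^{T-1})$ and $M=(\lambda^{T-1},\lambda^T)$. The concatenation identities (\ref{APhiproiz}) give
$$
A_\Lambda=\frac{A_{\Lambda'}A_M}{b_{\lambda^{T-1}}},\qquad \Phi_\Lambda=\Phi_{\Lambda'}\Phi_M.
$$
Substituting $A_{\Lambda'}=b_{\lambda^1}\Phi_{\Lambda'}$ from the inductive hypothesis and $A_M=b_{\lambda^{T-1}}\Phi_M$ from the base case, the two copies of $b_{\lambda^{T-1}}$ cancel and one obtains $A_\Lambda=b_{\lambda^1}\Phi_{\Lambda'}\Phi_M=b_{\lambda^1}\Phi_\Lambda$.

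The main point to watch is that the naive embedding $\Pi=\langle\lambda^1\rangle\cdot\Lambda\cdot\overline{\langle\lambda^T\rangle}$ only produces a valid plane partition when $\Lambda$'s profile is already a sequence of $0$'s followed by $1$'s; for an interlacing sequence with arbitrary profile the middle entries may contain a $10$ pattern that destroys monotonicity, so (\ref{APhiplane}) cannot be applied directly to $\Pi$ in that generality. This is precisely why one must reduce to $T=2$, where the profile has a single entry and monotonicity is automatic, and then propagate to general $T$ by iteration through (\ref{APhiproiz}).
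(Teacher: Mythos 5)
Your proof is correct and follows essentially the same strategy as the paper's: embed a suitable subsequence into a plane partition of the form $\langle\cdot\rangle\cdot\Lambda\cdot\overline{\langle\cdot\rangle}$ so that (\ref{APhiplane}) applies, and propagate to arbitrary profiles via the concatenation identities (\ref{APhiproiz}). The only difference is organizational: the paper reduces to constant-profile sequences and handles the all-$1$ case by reversal through (\ref{APhirev}), whereas you reduce to length-two pieces, for which the single profile entry automatically yields a valid plane-partition profile $(0,\dots,0,A_1,1,\dots,1)$; both reductions rest on exactly the same identities.
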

\begin{proof} If we show that the statement is true for sequences with constant profiles
then inductively using (\ref{APhiproiz}) we can
show that it is true for sequences with arbitrary profile. It is enough
to show that the statement is true for sequences with $(0, \dots,
0)$ profile because of (\ref{APhirev}). So, let
$\Lambda=(\lambda^1,\dots,\lambda^T)$ be a sequence with
$(0,\dots,0)$ profile. Then $\Pi=\left\langle
\lambda^1\right\rangle\cdot\Lambda\cdot\overline{\left\langle
\lambda^T\right\rangle}$ is a plane partition and from
(\ref{APhiproiz}), (\ref{APhirev}) and (\ref{APhiseq})  we obtain that $A_\Pi=A_\Lambda$ and
$\Phi_\Pi=b_{\lambda^1}\Phi_\Lambda$.  Then from (\ref{APhiplane}) it follows that $A_\Lambda=b_{\lambda^1}\Phi_\Lambda$.
\end{proof}
For skew plane partitions and cylindric partitions we obtain the
following two corollaries.
\begin{corollary}\label{corskew}
For a skew plane partition $\Pi$ we have $\Phi_\Pi=A_\Pi.$
\end{corollary}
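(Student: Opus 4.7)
The plan is to obtain this as an immediate specialization of the proposition just proved. That proposition asserts $b_{\lambda^1}\Phi_\Lambda = A_\Lambda$ for an arbitrary interlacing sequence $\Lambda = (\lambda^1,\dots,\lambda^T)$. A skew plane partition, by the definition recalled in the introduction, is an interlacing sequence of the form $\Pi = (\emptyset,\lambda^1,\dots,\lambda^T,\emptyset)$ whose first partition is the empty partition.

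Thus the strategy is simply: identify the first entry of the sequence representing $\Pi$, observe it is $\emptyset$, and then compute $b_\emptyset$. From the defining product \eqref{formulab}, $b_\emptyset(t)=\prod_{i\geq 1}\varphi_{m_i(\emptyset)}(t)=\prod_{i\geq 1}\varphi_0(t)=1$, since $m_i(\emptyset)=0$ for every $i$ and $\varphi_0(t)=1$ by the usual empty product convention. Substituting into the proposition yields $\Phi_\Pi = A_\Pi / b_\emptyset = A_\Pi$.

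There is essentially no obstacle: the corollary is a one-line consequence once one checks that $b_\emptyset=1$. The only thing worth remarking is that one must verify the boundary convention on profiles and the fact that the empty partition plays the role of $\lambda^1$ in the ambient sequence, so that the proposition applies verbatim; this is immediate from how skew plane partitions are encoded as interlacing sequences in Section 6.
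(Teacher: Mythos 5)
Your proof is correct and is exactly the intended deduction: the paper presents this corollary as an immediate consequence of the preceding proposition, specialized to sequences beginning with $\emptyset$, for which $b_\emptyset(t)=1$. Nothing further is needed.
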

\begin{corollary}\label{corcyl}
If $\Pi$ is a cylindric partition given by
$\Lambda=(\lambda^0,\dots,\lambda^T)$ then
$\Phi_{\Lambda}=A^{\text{cyl}}_{\Pi}$.
\end{corollary}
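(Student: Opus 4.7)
The plan is to derive the corollary from the Proposition just proved. Applying that Proposition to $\Lambda=(\lambda^0,\ldots,\lambda^T)$ regarded as an ordinary (non-cyclic) interlacing sequence gives
\[
b_{\lambda^0}\,\Phi_\Lambda \;=\; A_\Lambda.
\]
(The identity $\lambda^0=\lambda^T$ makes any would-be wrap-around horizontal strip empty, so the same $\Phi_\Lambda$ appears in both the cyclic and non-cyclic readings of $\Lambda$.) Since the corollary asserts $\Phi_\Lambda=A^{\text{cyl}}_\Pi$, it suffices to prove the combinatorial identity
\[
A_\Lambda \;=\; b_{\lambda^0}\,A^{\text{cyl}}_\Pi.
\]

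To establish this identity, I would compare the two border-component decompositions of $\Pi$. Every connected component supported away from the identified diagonal coincides in the cyclic and non-cyclic views, and so contributes the same factors to both sides; the entire discrepancy is concentrated at the seam. A cyclic border component of level $i$ that crosses the seam becomes, once the cylinder is cut, a pair of non-cyclic border components of the same level $i$, one ending on diagonal $0$ and one starting on diagonal $T$. Hence such a component contributes $(1-t^i)^2$ to $A_\Lambda$ but only $(1-t^i)$ to $A^{\text{cyl}}_\Pi$, producing one extra factor $(1-t^i)$ in the ratio $A_\Lambda/A^{\text{cyl}}_\Pi$ for each seam-crossing cyclic border component of level $i$.

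The main obstacle is to verify that the aggregate of these extra factors equals exactly $b_{\lambda^0}=\prod_{i\ge 1}\varphi_{m_i(\lambda^0)}(t)$. The core observation is that a run of $m=m_i(\lambda^0)$ consecutive equal parts in $\lambda^0$ yields a block of $m$ boxes of the same value on the seam, whose nested SE-extensions into the interior of the cylinder produce $m$ distinct cyclic border components straddling the seam at levels $1,2,\ldots,m$. These contribute exactly $\varphi_m(t)=(1-t)(1-t^2)\cdots(1-t^m)$ to the ratio, and multiplying over the distinct part values of $\lambda^0$ gives $b_{\lambda^0}$. This is the same bookkeeping principle that governs the appearance of $b_\lambda(t)$ for border components crossing the main diagonal in the analysis of Section~2. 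Combining with the Proposition then yields $\Phi_\Lambda=A_\Lambda/b_{\lambda^0}=A^{\text{cyl}}_\Pi$, completing the argument.
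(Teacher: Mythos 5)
Your reduction is exactly the paper's: the paper disposes of this corollary by combining the Proposition with the unproved ``observation'' that $A^{\text{cyl}}_\Pi(t)=A_\Lambda(t)/b_{\lambda^0}(t)$, so your plan to supply a combinatorial proof of $A_\Lambda=b_{\lambda^0}A^{\text{cyl}}_\Pi$ is aimed at the right target, and your accounting is correct in the generic situation. Indeed, a cyclic border component contains at most one seam box (two boxes of the seam diagonal in one component would have the same value, hence lie in the same run, hence have distinct levels), and if such a component does not wind around the cylinder it really does split into two flat border components of the same level when the cylinder is cut; the product of the resulting extra factors over the seam boxes is $b_{\lambda^0}$ by the run-by-run argument you give.

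The gap is the case of a cyclic border component that winds all the way around the cylinder. Cutting the cylinder does not split such a component into a pair: it unrolls into a \emph{single} flat border component containing one box of the first diagonal and one box of the last (necessarily at the same depth, hence of the same level). It therefore contributes $(1-t^i)$ to $A_\Lambda$, not $(1-t^i)^2$, while its seam box still contributes $(1-t^i)$ to $b_{\lambda^0}$, so your ledger does not balance. Concretely, take $T=2$, profile $(0,1)$ and $\lambda^0=\lambda^1=\lambda^2=(1)$: the flat diagram is the three-box plane partition filled with $1$'s, so $A_\Lambda=1-t$ and $b_{\lambda^0}=1-t$, and the corollary (checked against the $q^2$ coefficient of Theorem \ref{uvodcyl}) forces $A^{\text{cyl}}_\Pi=\Phi_\Lambda=1$; yet the unique cylindric border component here is a level-$1$ ring around the cylinder, which under your bookkeeping would contribute a factor $1-t$ to $A^{\text{cyl}}_\Pi$ and a factor $(1-t)^2$ to $A_\Lambda$. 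To close the argument you must treat winding components separately and show that they contribute trivially to $A^{\text{cyl}}_\Pi$ (equivalently, that the factor $(1-t^{\mathrm{lev}(s_k)})$ of $b_{\lambda^0}$ attached to a seam box whose component closes into a ring is absorbed by the single, unsplit flat component). This is the one genuinely delicate point in the ``observation,'' and it is exactly the point your proof asserts away.
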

%and also (using (\ref{vezacyl}))
The last corollary comes from the observation that if a cylindric
partition $\Pi$ is given by a sequence
$\Lambda=(\lambda^0,\dots,\lambda^T)$ then
\begin{equation}%\label{vezacyl}
A^{\text{cyl}}_\Pi(t)=A_{\Lambda}(t)/b_{\lambda^0}(t).
\end{equation}

In the rest of this section we prove generalized MacMahon's formulas
for skew plane partitions and cylindric partitions that are stated
in Theorems \ref{skew} and \ref{uvodcyl}. The proofs of these theorems were inspired by \cite{Bo}, \cite{OR}
and \cite{V2}. We use a special class of symmetric functions called
Hall-Littlewood functions.

%%%%%%%%%%%%%%%%%%%%%%%%%%%%%%%%%%%%%%%%%
\subsection{\bf{The weight functions}} \label{s2.1.1}
%%%%%%%%%%%%%%%%%%%%%%%%%%%%%%%%%%%%%%%%%
In this subsection we introduce weights on sequences of ordinary
partitions. For that we use Hall-Littlewood symmetric functions $P$
and $Q$. We recall some of the facts about these functions, but for
more details see Chapters III and VI of \cite{Mac}. We follow the
notation used there.

Recall that Hall-Littlewood symmetric functions
$P_{\lambda/\mu}(x;t)$ and $Q_{\lambda/\mu}(x;t)$ depend on a
parameter $t$ and are indexed by pairs of ordinary partitions
$\lambda$ and $\mu$. In the case when $t=0$ they are equal to
ordinary Schur functions and in the case when $t=-1$ to Schur $P$
and $Q$ functions.

%For an ordinary partition $\lambda$ and a box $s=(i,j)$ we write
%$s\in \lambda$ if $s$ is a box in the Young diagram of $\lambda$.
%Let
%$$
%b_\lambda(s)=b_\lambda(s;q,t)=
%\begin{cases}
%\displaystyle\frac{1-q^{\lambda_i-j}t^{\lambda'_j-i+1}}{1-q^{\lambda_i-j+1}t^{\lambda'_j-i}},&
%s\in\lambda,\\
%1& \text{otherwise,}
%\end{cases}
%$$
%where $\lambda'$ is the conjugate partition of $\lambda$ and
%\begin{equation}\label{b}
%b_\lambda(q,t)=\prod_{s\in\lambda}b_\lambda(s).
%\end{equation}
The relationship between $P$ and $Q$ functions is given by (see
(5.4) of \cite[Chapter III]{Mac})
\begin{equation}\label{RelQP}
Q_{\lambda/\mu}(x;t)=\frac{b_\lambda}{b_\mu}P_{\lambda/\mu}(x;t),
\end{equation}
where $b$ is given by (\ref{formulab}).
%$$
%b_\lambda=\prod_{i\geq1}\varphi_{m_i(\lambda)}(t)
%$$
%and $m_i(\lambda)$ denotes the number of times $i$ occurs as a part of $\lambda$ and $\varphi_r(t)=(1-t)(1-t^2)\cdots(1-t^r)$.
Recall that (by (5.3) of \cite[Chapter III]{Mac} and
(\ref{RelQP}))
\begin{equation}\label{nonzero}
P_{\lambda/\mu}=Q_{\lambda/\mu}=0\;\;\;\text{unless } \lambda\supset
\mu.
\end{equation}

We set $P_\lambda=P_{\lambda/\emptyset}$ and
$Q_\lambda=Q_{\lambda/\emptyset}$. Recall that ((4.4) of
\cite[Chapter III]{Mac})
$$
H(x,y;t):=\sum_{\lambda
}Q_\lambda(x;t)P_\lambda(y;t)=\prod_{i,j}\frac{1-tx_iy_j}{1-x_iy_j}.
$$

A specialization of an algebra $\mathcal{A}$ is an algebra
homomorphism $\rho: \mathcal{A} \to \mathbb{C}$. If $\rho$ and
$\sigma$ are specializations of the algebra of symmetric functions
then we write $P_{\lambda/\mu}(\rho;t)$, $Q_{\lambda/\mu}(\rho;t)$
and $H(\rho,\sigma;t)$ for the images of $P_{\lambda/\mu}(x;t)$,
$Q_{\lambda/\mu}(x;t)$ and $H(x,y;t)$ under $\rho$, respectively
$\rho \otimes \sigma$. Every map $\rho:(x_1,x_2,\dots)\to
(a_1,a_2,\dots)$ where $a_i\in \mathbb{C}$ and only finitely many
$a_i$'s are nonzero defines a specialization. These specializations
are called evaluations. A multiplication of a specialization $\rho$
by a scalar $a\in \mathbb{C}$ is defined by its images on power
sums:
$$
p_n(a \cdot \rho)=a^np_n(\rho).
$$

%For a horizontal strip $\theta=\lambda/\mu$ we define
%$$
%\begin{array}{c}
%I_{\lambda/\mu}=\left\{i\geq1|\theta_i^\prime=1 \text{ and } \theta_{i+1}^\prime=0 \right\}\\
%J_{\lambda/\mu}=\left\{j\geq1|\theta_j^\prime=0 \text{ and } \theta_{j+1}^\prime=1 \right\}
%\end{array}
%$$
%Let
%\begin{equation}\label{definphi}
%\varphi_{\lambda/\mu}(t)=\prod_{i \in
%I_{\lambda/\mu}}(1-t^{m_i(\lambda)}) \text{ and } \psi_{\lambda/\mu}(t)=\prod_{j \in
%J_{\lambda/\mu}}(1-t^{m_j(\mu)}).
%\end{equation}
%Then
%$$
%\varphi_{\lambda/\mu}/\psi_{\lambda/\mu}=b_\lambda(t)/b_\mu(t).
%$$
%where $C_{\lambda/\mu}$ is the union of all columns that intersect
%$\lambda/\mu$.
If $\rho$ is the specialization of $\Lambda$ where
$x_1=a,\,x_2=x_3=\ldots=0$ then by (5.14) and (5.14') of \cite[Chapter VI]{Mac}
\begin{equation} \label{spec}
\begin{array}{lcc}
 Q_{\lambda/\mu}(\rho;t)=
\begin{cases}
\varphi_{\lambda/\mu}(t) a^{|\lambda|-|\mu|} & \text{$\;\;\;\;\;\;\;\lambda \supset \mu$,  $\lambda/\mu$ is a horizontal strip},\\
0 & \text{$\;\;\;\;\;\;\;$otherwise}.
\end{cases}
\end{array}
\end{equation}
Similarly,
\begin{equation} \label{specP}
\begin{array}{lcc}
 P_{\lambda/\mu}(\rho;t)=
\begin{cases}
\psi_{\lambda/\mu}(t) a^{|\lambda|-|\mu|} & \text{$\;\;\;\;\;\;\;\lambda \supset \mu$,  $\lambda/\mu$ is a horizontal strip},\\
0 & \text{$\;\;\;\;\;\;\;$otherwise}.
\end{cases}
\end{array}
\end{equation}

Let $T \geq 2$ be an integer and $\rho^\pm=(\rho_1^{\pm},\ldots,
\rho_{T-1}^{\pm})$ be finite sequences of specializations. For a
sequence of partitions $ \Lambda=(\lambda^{1}, \ldots ,\lambda^{T})$
we set the weight function $W(\Lambda)$ to be
\begin{eqnarray*}
W(\Lambda)
&=&q^{T|\lambda^{T}|}\sum_{M}\prod_{n=1}^{T-1}P_{\lambda^{n}/\mu^{n}}
(\rho_{n}^{-};t)Q_{\lambda^{n+1}/\mu^{n}}({\rho_{n}^{+}};t),
%&=&q^{|\lambda|}\prod_{n=1}^{T}Q_{\lambda^{n-1}/\mu^{n}}
%(\tilde{\rho}_{n}^{+};t)P_{\lambda^{n}/\mu^{n}}({\tilde{\rho}_n^{-}};t),
\end{eqnarray*}
where $q$ and $t$ are parameters and the sum ranges over all
sequences of partitions $M=(\mu^{1}, \ldots ,\mu^{T-1})$.

We can also define the weights using another set of specializations
$R^{\pm}=(R_1^\pm,\dots,R_{T-1}^\pm)$ where $R_i^\pm=q^{\pm
i}\rho_i^\pm$. Then
$$
W(\Lambda)=\sum_M W(\Lambda,M, R^-,R^+),
$$
where
$$
W(\Lambda,M,
R^-,R^+)=q^{|\Lambda|}\prod_{n=1}^{T-1}P_{\lambda^{n}/\mu^{n}}
(R_{n}^{-};t)Q_{\lambda^{n+1}/\mu^{n}}(R^{+}_n;t).
$$
We will focus on two special sums, namely
$$
Z_{\text{skew}}=\sum_{\Lambda=(\emptyset,\lambda^1,\ldots,\lambda^T,\emptyset)}W(\Lambda)
$$
and
$$
Z_{\text{cyl}}=\sum_{\substack{\Lambda=(\lambda^0,\lambda^1,\ldots,\lambda^T)\\
\lambda^0=\lambda^T}}W(\Lambda).
$$

Let $A^-=(A^-_1,\dots,A^-_{T-1})$ and $A^+=(A^+_1,\dots,A^+_{T-1})$
be sequences of 0's and 1's such that $A^-_k+A^+_k=1$.

If the specializations $R^\pm$ are evaluations given by
%the images of power sums $p_n$:
\begin{equation}{\label{evaluations}}
{R}_k^\pm: x_1=A^\pm_k,x_2=x_3=\ldots=0, %\;\;\;\;\;\;
%{\rho}_k^-:x_1=q^{-k}R^+_k,\,x_2=x_3=\ldots=0.
\end{equation}
then by (\ref{spec}) and (\ref{specP}) the weight $W(\Lambda)$ vanishes unless $\Lambda$ is an interlacing
sequence of profile $A^-$, and in that case
$$
W(\Lambda)=\Phi_{\Lambda}(t)q^{|\Lambda|}.
$$

Then, from Corollaries \ref{corskew} and \ref{corcyl}, we have 
$$
Z_{\text{skew}}=\sum_{\Pi \in \text{Skew}(T,A)}A_\Pi(t)q^{|\Pi|}
$$
and
$$
Z_{\text{cyl}}=\sum_{\Pi \in
\text{Cyl}(T,A)}A^{\text{cyl}}_\Pi(t)q^{|\Pi|},
$$
where $A^-$ in both formulas is given by the fixed profile $A$ of
skew plane partitions, respectively cylindric partitions.

%%%%%%%%%%%%%%%%%%%
\subsection{\bf{Partition functions}}
%%%%%%%%%%%%%%%%%%%%
%We will focus on two special sums
%$$
%Z_{\text{skew}}=\sum_{\lambda=(\emptyset,\lambda^1,\dots,\lambda^T,\emptyset)}W(\lambda)
%$$
%and
%$$
%Z_{\text{cyl}}=\sum_{\substack{\lambda=(\lambda^0,\lambda^1,\dots,\lambda^T)\\ \lambda^0=\lambda^T}}W(\lambda)
%$$
%
%Let
%$$
%Z(a,b)=\sum_{\lambda, \mu}W(\lambda,\mu,a,b),
%$$
%where the sum ranges over all $T$-periodic sequences $
%\lambda=(\lambda^{0},\lambda^{1}, \ldots )$, and $
%\mu=(\mu^{0},\mu^{1}, \ldots )$.
If $\rho$ is $x_1=r,\,x_2=x_3=\ldots=0$ and $\sigma$ is
$x_1=s,\,x_2=x_3=\ldots=0$ then
\begin{equation}\label{simpleh}
H(\rho,\sigma)=\frac{1-trs}{1-rs}.
\end{equation}
Thus, for the specializations $\rho_i^{\pm}=q^{\mp i}R_i^{\pm}$, where $R^{\pm}$ are given by (\ref{evaluations}), we have
\begin{equation}\label{h}
H(\rho_i^+,\rho_j^-)=\frac{1-tq^{j-i}A^+_iA^-_j}{1-q^{j-i}A^+_iA^-_j}.
\end{equation}

We now use Proposition 2.2 of \cite{V2}):
\begin{proposition}
$$
Z_{\text{skew}}(\rho^-,\rho^+)=\prod_{0\leq i<j\leq T}
H(\rho_i^+,\rho_j^-).
$$
\end{proposition}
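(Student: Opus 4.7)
The plan is to derive the product formula by iterating a commutation relation between Hall--Littlewood transfer operators, paralleling Borodin's transfer-matrix approach for Schur processes \cite{Bo}.

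The key input from Chapter III of \cite{Mac} is the skew Cauchy identity for Hall--Littlewood functions,
\begin{equation*}
\sum_\nu P_{\nu/\lambda}(y;t)\,Q_{\nu/\mu}(x;t)
= H(x,y;t)\sum_\kappa Q_{\lambda/\kappa}(x;t)\,P_{\mu/\kappa}(y;t).
\end{equation*}
Define operators $\Gamma_\pm(\rho)$ on the linear span of $\{P_\lambda\}$ by
\begin{equation*}
\Gamma_+(\rho)P_\mu=\sum_\lambda Q_{\lambda/\mu}(\rho;t)P_\lambda,\qquad
\Gamma_-(\rho)P_\lambda=\sum_\mu P_{\lambda/\mu}(\rho;t)P_\mu.
\end{equation*}
The skew Cauchy identity translates directly into the commutation relation
\begin{equation*}
\Gamma_-(\rho^-)\,\Gamma_+(\rho^+)=H(\rho^+,\rho^-;t)\,\Gamma_+(\rho^+)\,\Gamma_-(\rho^-).
\end{equation*}

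Next I would rewrite $Z_{\text{skew}}$ as a vacuum-to-vacuum matrix element. The auxiliary partitions $\mu^n$ in the weight $W(\Lambda)$ get absorbed into the composition $\Gamma_+(\rho_n^+)\Gamma_-(\rho_n^-)$, and the boundary conditions $\lambda^0=\lambda^T=\emptyset$ correspond to starting and ending at the vacuum $P_\emptyset=1$. This yields
\begin{equation*}
Z_{\text{skew}}=\langle\emptyset|\,\Gamma_+(\rho_{T-1}^+)\Gamma_-(\rho_{T-1}^-)\cdots \Gamma_+(\rho_1^+)\Gamma_-(\rho_1^-)\,|\emptyset\rangle,
\end{equation*}
where $\langle\emptyset|\,\cdot\,|\emptyset\rangle$ denotes extraction of the $P_\emptyset$-coefficient of the action on $P_\emptyset$.

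Finally, I would apply the commutation relation repeatedly to move every $\Gamma_-(\rho_j^-)$ to the right past every $\Gamma_+(\rho_i^+)$ with $i<j$. Each such swap contributes exactly one factor $H(\rho_i^+,\rho_j^-;t)$. After all commutations, every $\Gamma_-$ acts on $|\emptyset\rangle$ trivially (since $P_{\emptyset/\mu}=0$ unless $\mu=\emptyset$, and then equals $1$), and likewise every $\Gamma_+$ on $\langle\emptyset|$ returns $\langle\emptyset|$. The accumulated scalar is precisely $\prod_{0\le i<j\le T}H(\rho_i^+,\rho_j^-;t)$, the extended range reflecting the convention that the boundary "specializations" at $i=0$ and $j=T$ are trivial and contribute factors of $1$.

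The main obstacle is setting up the operator formalism sufficiently carefully that the weight $W(\Lambda)$ genuinely unfolds into the claimed matrix element, together with verifying the skew Cauchy identity in the exact form needed for the commutation relation above. A secondary difficulty is the index bookkeeping at the boundaries to ensure the product extends correctly over all pairs $0\le i<j\le T$ rather than only over the nontrivial specializations $1\le i<j\le T-1$.
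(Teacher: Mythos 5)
Your overall strategy --- encode the sum over interlacing sequences as a vacuum-to-vacuum product of transfer operators $\Gamma_\pm$ and iterate the commutation relation coming from the skew Cauchy identity --- is the right one: it is precisely the mechanism the paper itself uses to prove the cylindric analogue (Proposition \ref{Z}, via equation (\ref{zamena})), whereas for the present statement the paper simply cites Proposition 2.2 of \cite{V2} without proof. The commutation relation you write down is correct and follows from the skew Cauchy identity exactly as you say.

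The gap is in the boundary bookkeeping, and your proposed resolution of it is wrong. The sequence $(\emptyset,\lambda^1,\dots,\lambda^T,\emptyset)$ has $T+1$ transitions, indexed $0$ through $T$, each carrying its own pair of specializations: the transition $n=0$ contributes $Q_{\lambda^1}(\rho_0^+;t)$ (the factor $P_{\emptyset/\mu^0}$ forces $\mu^0=\emptyset$) and the transition $n=T$ contributes $P_{\lambda^T}(\rho_T^-;t)$. The correct matrix element is therefore
$$
Z_{\text{skew}}=\langle\emptyset|\,\Gamma_+(\rho_T^+)\Gamma_-(\rho_T^-)\cdots\Gamma_+(\rho_0^+)\Gamma_-(\rho_0^-)\,|\emptyset\rangle ,
$$
in which only the two extreme operators $\Gamma_+(\rho_T^+)$ and $\Gamma_-(\rho_0^-)$ act trivially on the vacuum; the operators $\Gamma_+(\rho_0^+)$ and $\Gamma_-(\rho_T^-)$ do not, and commuting them past the others produces the factors $H(\rho_0^+,\rho_j^-;t)$ and $H(\rho_i^+,\rho_T^-;t)$, which are genuinely nontrivial. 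Your matrix element runs only over the indices $1,\dots,T-1$, so the commutations yield $\prod_{1\le i<j\le T-1}H(\rho_i^+,\rho_j^-;t)$, and the claim that the remaining factors ``are trivial and contribute factors of $1$'' is false: in the application to Theorem \ref{skew} one has $A_0=0$ and $A_T=1$, so for instance the pair $(i,j)=(0,T)$ contributes $(1-tq^{T})/(1-q^{T})\neq 1$. The specializations that genuinely drop out are $\rho_T^+$ and $\rho_0^-$, and these are exactly the ones absent from the product $\prod_{0\le i<j\le T}H(\rho_i^+,\rho_j^-)$. The fix is purely one of indexing --- include all $T+1$ operator pairs --- after which your argument goes through verbatim.
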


This proposition together with (\ref{h}) implies Theorem
\ref{skew}. The generating function formula for skew plane partitions can
also be seen as the generating function formula for reverse plane partitions
as explained in the introduction.

Each skew plane partition can be represented as an infinite sequence
of ordinary partitions by adding infinitely many empty partitions to
the left and right side. In that way, the profiles become infinite
sequences of 0's and 1's. Theorem \ref{skew} also gives  the
generating function formula for skew plane partitions of infinite profiles
$A=(\dots,A_{-1},A_0,A_{1},\dots)$:
\begin{equation}\label{infiniteskew}
\sum_{\Pi \in \text{Skew}(A)}A_\Pi(t)q^{|\Pi|}= \prod_{\substack{i<
j\\A_i=0,\,A_j=1}}\frac{1-tq^{j-i}}{1-q^{j-i}}.
\end{equation}

Similarly for cylindric partitions, using (\ref{simpleh}) together with
the following proposition we obtain Theorem \ref{uvodcyl}.
\begin{proposition}\label{Z}
%The sum of the weights $W(\lambda, \mu;t)$ over all sequences of
%partitions $ \lambda=(\lambda^{0},\lambda^{1}, \ldots ,\lambda^{T})$, where $\lambda^0=\lambda^T$
%and $ \mu=(\mu^{1},\mu^{2}, \ldots ,\mu^{T})$ is equal to
\begin{equation} \label{FormulaForZ}
Z_{\text{cyl}}(R^-,R^+)%=\prod_{n=1}^\infty \frac{1}{1-s^{n}}\prod_{1 \leq l < k
%\leq T}\Pi(s^{n}\rho_l^{+},\rho_k^{-};t)\prod_{n=1}^\infty
%\prod_{k,l}\Pi(\rho_l^{+},\rho_k^{-};t) \\
=\prod_{n=1}^\infty
\frac{1}{1-q^{nT}}\prod_{k,\,l=1}^TH(q^{(k-l)_{(T)}+(n-1)T}R^-_k,R^+_l),
%\prod_{l <
%k}\Pi(q^{(k-l)n}\tilde{\rho}_l^{+},\tilde{\rho}_k^{-};t)
%\prod_{k\leq
%l}\Pi(q^{(T+k-l)n}\tilde{\rho}_l^{+},\tilde{\rho}_k^{-};t),
\end{equation}
where $i_{(T)}$ is the smallest positive integer such that $i\equiv
i_{(T)} \text{ mod }T$.
%where
%$$\Pi(\rho_i^{+},\rho_j^{-};t)=\sum_{\lambda}Q_{\lambda}(\rho_i^{+};t)P_{\lambda}(\rho_j^{-};t).$$
\end{proposition}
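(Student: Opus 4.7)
The plan is to lift the vacuum-expectation argument of Proposition 2.2 of \cite{V2} (the skew case) to a cyclic/trace setting, so that the identification $\lambda^0=\lambda^T$ produces both the infinite product of $H$-factors and the prefactor $\prod_{n\ge1}(1-q^{nT})^{-1}$. Let $\mathcal F$ be the vector space spanned by $\{|\lambda\rangle\}$ indexed by partitions, let $q^{L_0}|\lambda\rangle=q^{|\lambda|}|\lambda\rangle$, and introduce half-vertex operators
$$\Gamma_-(\rho)|\mu\rangle=\sum_{\lambda}Q_{\lambda/\mu}(\rho;t)\,|\lambda\rangle,\qquad \Gamma_+(\rho)|\lambda\rangle=\sum_{\mu}P_{\lambda/\mu}(\rho;t)\,|\mu\rangle.$$
The Hall--Littlewood skew Cauchy identity (Macdonald, Ch.~III (5.4)) is exactly the operator relation $\Gamma_+(\rho)\Gamma_-(\sigma)=H(\sigma,\rho;t)\Gamma_-(\sigma)\Gamma_+(\rho)$, while the $\Gamma_-$'s (and the $\Gamma_+$'s) commute among themselves. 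After summing over $M$, each factor of $W(\Lambda)$ is a matrix element $\langle\lambda^{n+1}|\Gamma_-(R^+_n)\Gamma_+(R^-_n)|\lambda^n\rangle$, and distributing $q^{|\Lambda|}$ as one $q^{L_0}$ per transition, the cyclic condition turns $Z_{\text{cyl}}$ into the trace
$$Z_{\text{cyl}}=\operatorname{Tr}_{\mathcal F}\Bigl[\,q^{L_0}\Gamma_-(R^+_1)\Gamma_+(R^-_1)\cdots q^{L_0}\Gamma_-(R^+_T)\Gamma_+(R^-_T)\,\Bigr].$$

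Next I would consolidate the $q^{L_0}$'s and normal-order. The identity $q^{L_0}\Gamma_\pm(\rho)=\Gamma_\pm(q\rho)\,q^{L_0}$ lets one push every $q^{L_0}$ to the right, collecting a single $q^{TL_0}$ at the end while scaling the spectral parameter at position $k$ from $R^\pm_k$ to $q^kR^\pm_k$. Then, for each pair $1\le k<l\le T$, one application of the commutation relation moves $\Gamma_+(q^kR^-_k)$ past $\Gamma_-(q^lR^+_l)$, producing the scalar factor $H(q^lR^+_l,q^kR^-_k;t)$. Since each family of operators commutes internally, the surviving operator part is $\Gamma_-(X)\Gamma_+(Y)\,q^{TL_0}$ with $X=\sum_kq^kR^+_k$ and $Y=\sum_kq^kR^-_k$.

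The third step evaluates the normal-ordered trace. By cyclicity and $\Gamma_+(Y)q^{TL_0}=q^{TL_0}\Gamma_+(q^TY)$, followed by one more application of the commutation relation, one obtains
$$\operatorname{Tr}\bigl(q^{TL_0}\Gamma_-(X)\Gamma_+(Y)\bigr)=H(X,q^TY;t)\,\operatorname{Tr}\bigl(q^{TL_0}\Gamma_-(X)\Gamma_+(q^TY)\bigr).$$
Iterating this identity and sending the shift to infinity (using $|q|<1$, so $\Gamma_+(q^{nT}Y)\to I$) gives $\prod_{n\ge1}H(X,q^{nT}Y;t)\cdot\operatorname{Tr}(q^{TL_0}\Gamma_-(X))$. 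Because $Q_{\lambda/\lambda}(\rho;t)=\varphi_{\emptyset}(t)=1$, the residual trace is diagonal and equals $\sum_\lambda q^{T|\lambda|}=\prod_{n\ge1}(1-q^{nT})^{-1}$.

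Combining the three contributions yields
$$Z_{\text{cyl}}=\prod_{n\ge1}\frac{1}{1-q^{nT}}\;\prod_{1\le k<l\le T}\!H(q^lR^+_l,q^kR^-_k;t)\;\prod_{n\ge1}\prod_{k,l=1}^T\!H(q^kR^+_k,q^{nT+l}R^-_l;t).$$
The main remaining obstacle is not conceptual but combinatorial: one has to absorb the normal-ordering product into the iterated family as its ``$n=0$ layer restricted to $k>l$'', verifying that every pair $(k,l)\in\{1,\dots,T\}^2$ then appears exactly once for each $n\ge0$ with shift $q^{(k-l)_{(T)}+nT}$, matching the statement after swapping the roles of the $+$ and $-$ indices and using the case split $(k-l)_{(T)}=k-l$ for $k>l$ versus $(k-l)_{(T)}=k-l+T$ for $k\le l$. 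One also has to justify convergence of the iterated trace identity and the interchange of the limit with the infinite trace, both standard under the assumption $|q|<1$.
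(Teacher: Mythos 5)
Your overall strategy --- rewriting $Z_{\text{cyl}}$ as a trace of a product of Hall--Littlewood half-vertex operators, normal-ordering via the commutation relation $\Gamma_+(y)\Gamma_-(x)=H(x,y;t)\Gamma_-(x)\Gamma_+(y)$ (which is precisely identity (\ref{zamena})), and then iterating the trace identity to produce the infinite product together with the prefactor $\prod_{n\ge1}(1-q^{nT})^{-1}$ --- is sound, and is an operator-language repackaging of the paper's own proof, which iterates (\ref{zamena}) directly on the sums to obtain $Z_{\text{cyl}}(R^-,R^+)=H(q\operatorname{sh}R^-;R^+)\,Z_{\text{cyl}}(q\operatorname{sh}R^-,R^+)$, applies this $T$ times, and passes to the limit. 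Your evaluation of the residual trace as $\sum_\lambda q^{T|\lambda|}=\prod_{n\ge1}(1-q^{nT})^{-1}$ is correct.

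There is, however, a genuine error in your second step. The identity $q^{L_0}\Gamma_\pm(\rho)=\Gamma_\pm(q\rho)q^{L_0}$ holds only for $\Gamma_-$; since $\Gamma_+(\rho)$ lowers the degree, the correct relation is $q^{L_0}\Gamma_+(\rho)=\Gamma_+(q^{-1}\rho)q^{L_0}$, equivalently $\Gamma_+(\rho)q^{L_0}=q^{L_0}\Gamma_+(q\rho)$ --- which is the form you in fact use correctly in your third step, so your second and third steps are mutually inconsistent. Consequently, pushing the $q^{L_0}$'s to the right scales the $\Gamma_-$ parameters by positive powers of $q$ but the $\Gamma_+$ parameters by negative powers; it does not send both $R^+_k$ and $R^-_k$ to $q^kR^\pm_k$. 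Your normal-ordering factors $H(q^lR^+_l,q^kR^-_k;t)$ therefore carry the exponent $k+l$ (by (\ref{simpleh}), $H$ of two one-variable evaluations depends only on the product of the variables), whereas the statement requires exponents of the form $(k-l)_{(T)}$, i.e.\ differences of indices reduced modulo $T$. The ``combinatorial bookkeeping'' you defer at the end is exactly where this surfaces: with exponent $k+l$ the union of the normal-ordering layer and the iterated layers does not match $\{(k-l)_{(T)}+(n-1)T\colon n\ge1\}$. Once the commutation relation is corrected, the two families of parameters acquire opposite powers of $q$, the normal-ordering factors carry difference exponents, and the union does reassemble into the stated product; so the proof is repairable, but as written the central exponent computation is wrong.
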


\begin{proof}
We use
\begin{equation}\label{zamena}
\sum_{\lambda }Q_{\lambda / \mu}(x)P_{\lambda /
\nu}(y)=H(x,y)\sum_{\tau }Q_{\nu / \tau}(x)P_{\mu / \tau}(y).
\end{equation}
The proof of this is analogous to the proof of Proposition 5.1 ((\ref{zamena}) for $t=-1$) that
appeared in \cite{V1}. Also, see Example 26 of
Chapter I, Section 5 of \cite{Mac}.

The proof of (\ref{FormulaForZ}) uses the same idea as in the proof
of Proposition 1.1 of \cite{Bo}.

We start with the definition of $Z_{\text{cyl}}(R^-,R^+)$: %We omit
%index cyl.
\begin{eqnarray*}
Z_{\text{cyl}}(R^-,R^+)&=&\sum_{\Lambda,M}W(\Lambda,M,R^-,R^+)\\
&=&
\sum_{\Lambda,M}q^{|\Lambda|}\prod_{n=1}^{T}P_{\lambda^{n-1}/\mu^{n}}
(R_n^-)Q_{\lambda^{n}/\mu^{n}}(R_n^+)\\
&=&\sum_{\Lambda,M}q^{|M|}\prod_{n=1}^{T}P_{\lambda^{n-1}/\mu^{n}}
(q R^-_n)Q_{\lambda^{n}/\mu^{n}}(R^+_n).
\end{eqnarray*}
If $x=(x_1,x_2,\dots,x_T)$ is a vector we define the shift as
$\operatorname{sh}(x)=(x_2,\dots,x_T,x_1)$. We set $R^{\pm}_0=R^{\pm}_T$, $\mu_0=\mu_T$ and $\nu_0=\nu_T$. %For an
%interlacing $\Lambda=(\lambda^0,\lambda^1,\dots,\lambda^T)$ where
%$\lambda^0=\lambda^T$ the shift of $\lambda$ is
%$\operatorname{sh}(\lambda)=(\lambda^1,\lambda^2,\dots,
%\lambda^T,\lambda^1)$.
If using the formula (\ref{zamena}) we
substitute the sums over the $\lambda^{i}$'s by the sums over the $\nu^{i-1}$'s
we obtain
\begin{eqnarray*}
Z_{\text{cyl}}(R^-,R^+)&=&H(q\operatorname{sh}R^-;R^+)\sum_{M,N}q^{|M|}Q_{\mu^1/\nu^0}(R^+_T),P_{\mu^0/\nu^0}(q
R^-_1) \cdot
\\
&&\cdot Q_{\mu^2/\nu^1}(R^+_1)P_{\mu^1/\nu^1}(q R^-_2)\cdots
Q_{\mu^0/\nu^{T-1}}(q R^+_{T-1})P_{\mu^{T-1}/\nu^{T-1}}(R^-_{T})\\
&=&H(q\operatorname{sh}R^-;R^+)\sum_{{M},{N}}
W(\operatorname{sh}M,N,q\operatorname{sh}R^-,R^+)\\
&=&H(q\operatorname{sh}R^-;R^+)Z_{\text{cyl}}(q \operatorname{sh}R^-,R^+).
\end{eqnarray*}
Since $\operatorname{sh}^T=\operatorname{id}$, if we apply the same
trick $T$ times, we obtain
\begin{eqnarray*}
Z_{\text{cyl}}(R^-,R^+)&=&\prod_{i=1}^T{H(q^i\operatorname{sh}^iR^-;R^+)}\cdot
Z_{\text{cyl}}(q^TR^-,R^+)\\
&=&\prod_{i=1}^T{H(q^i\operatorname{sh}^iR^-;R^+)}\cdot Z_{\text{cyl}}(sR^-,R^+),
\end{eqnarray*}
where $s=q^T$. Thus,
$$
Z_{\text{cyl}}(R^{-},R^{+})=\prod_{n=1}^\infty\prod_{i=1}^T
{H(q^{i+(n-1)T}\operatorname{sh}^iR^-;R^+)} \lim_{n\to
\infty}{Z_{\text{cyl}}(s^nR^-,R^+)}.
$$
From
$$
\lim_{n\to \infty}{Z_{\text{cyl}}(s^nR^-,R^+)}=\lim_{n\to
\infty}{Z_{\text{cyl}}(\text{trivial},R^+)}=\prod_{n=1}^\infty\frac{1}{1-s^n}
$$
and
\begin{eqnarray*}
\prod_{i=1}^T{H(q^i\operatorname{sh}^iR^-,R^+)}&=&
\prod_{l=1}^T\left[\prod_{k=l+1}^{T}H(q^{k-l}R^-_k,R^+_l)\prod_{k=1}^lH(q^{T+k-l}R^-_k,R^+_l)\right]\\
&=&\prod_{k,\,l=1}^TH(q^{(k-l)_{(T)}}R^-_k,R^+_l),
\end{eqnarray*}
we conclude that (\ref{FormulaForZ}) holds.
\end{proof}
Observe that if in Theorem \ref{uvodcyl} we let $T\to \infty$, i.e.
the circumference of the cylinder tends to infinity then we recover
(\ref{infiniteskew}).

\section{Concluding remarks}

In this paper, we determine the generating functions for plane overpartitions
with several types of constraints. In particular, we can compute the generating
function for plane overpartitions with at most $r$ rows and $c$ columns
and the generating function for plane overpartitions with entries at most $n$.
All the proofs can be done with symmetric functions, but we also highlight combinatorial
proofs when these are simple.
The natural question is therefore to put those constraints together and
to compute the generating
function of plane overpartitions with at most $r$ rows, $c$ columns
and entries at most $n$. Unfortunately, this generating function cannot be written
in terms of a product in the style of Theorems 1--8. For example, when $r=1$,
this generating function can be written as
$$
\sum_{k=0}^n a^k q^{k+1\choose 2}\frac{(q)_{n-k+c}}{(q)_{n-k}(q)_k(q)_{c-k}}.
$$
Computer experiments show that there is a large irreducible factor in the product.
Let $\mathbb A$ be the alphabet $1+q+\cdots +q^n$
and $\mathbb B$ be the alphabet $-aq-aq^2-\cdots -aq^n$.
We can write this generating function as
$$
\sum_{\lambda\subseteq c^r} s_{\lambda}(\mathbb A-\mathbb B) 
$$
where the sum is taken on partitions $\lambda$ such that all the parts of $\lambda'$
have the same parity as $r$.

In Section 6 we compute generating functions for cylindric partitions.
Their generating functions are elegant products. In the case $t=0$ (cylindric partitions
and $t=-1$ cylindric overpartitions, one could certainly adapt the ideas of Gansner \cite{G}
to give  a constructive proof of the result. It should be interesting to generalize the
combinatorial techniques used in this paper in Sections 3 and 5 to
understand the combinatorics of plane partitions for general $t$.

 \end{document}